\def\ga{\mathfrak{a}}
\def\gb{\mathfrak{b}}
\def\gg{\mathfrak{g}}
\def\gh{\mathfrak{h}}
\def\gk{\mathfrak{k}}
\def\gl{\mathfrak{l}}
\def\gm{\mathfrak{m}}
\def\gn{\mathfrak{n}}
\def\gp{\mathfrak{p}}
\def\gs{\mathfrak{s}}
\def\gt{\mathfrak{t}}
\def\gu{\mathfrak{u}}
\def\gv{\mathfrak{v}}
\def\gz{\mathfrak{z}}
\def\C{\mathbb{C}}
\def\R{\mathbb{R}}
\def\Z{\mathbb{Z}}
\def\cC{\mathcal{C}}
\def\cE{\mathcal{E}}
\def\cF{\mathcal{F}}
\def\cH{\mathcal{H}}
\def\cL{\mathcal{L}}
\def\cO{\mathcal{O}}
\def\cU{\mathcal{U}}
\def\Ad{{\rm Ad}\,}
\def\ad{{\rm ad}\,}
\def\Pf{{\rm Pf}\,}
\def\Ind{{\rm Ind\,}}
\def\tr{{\rm trace\,}}
\def\Det{\rm Det}
\newtheorem{theorem}[equation]{Theorem}
\newtheorem{lemma}[equation]{Lemma}
\newtheorem{corollary}[equation]{Corollary}
\newtheorem{proposition}[equation]{Proposition}
\newtheorem{definition}[equation]{Definition}
\newtheorem{remark}[equation]{Remark}
\def\sideremark#1{\ifvmode\leavevmode\fi\vadjust{\vbox to0pt{\vss% the remark
 \hbox to 0pt{\hskip\hsize\hskip1em%                          will appear only
\vbox{\hsize2cm\tiny\raggedright\pretolerance10000 %          on the side
 \noindent #1\hfill}\hss}\vbox to8pt{\vfil}\vss}}} %          in 2cm
\title{The Plancherel Formula for Minimal Parabolic Subgroups}
\author{Joseph A. Wolf}
\date{December 17, 2013}
\begin{document}

\maketitle

\abstract{In a recent paper we found conditions for a nilpotent Lie
group to be foliated
into subgroups that have square integrable 
unitary representations that fit together to form a filtration by normal
subgroups.  That resulted in explicit character formulae, Plancherel
Formulae and multiplicity formulae.  We also showed that nilradicals $N$ of
minimal parabolic subgroups $P = MAN$ enjoy that ``stepwise square integrable''
property.  Here we extend those results from $N$ to $P$.  The Pfaffian
polynomials, which give orthogonality relations and Plancherel density
for $N$, also give a semi-invariant differential operator that compensates lack
of unimodularity for $P$.  The result is a completely explicit Plancherel
Formula for $P$.}

\section{Introduction}
\label{sec1}
A connected simply connected Lie group $N$
with center $Z$ is called {\em square integrable} if it has unitary
representations $\pi$ whose coefficients $f_{u,v}(x) = 
\langle u, \pi(x)v\rangle$ satisfy $|f_{u,v}| \in \cL^2(N/Z)$.  
C.C. Moore and the author worked out the structure and representation
theory of these groups \cite{MW1973}.  If $N$ has one 
such square integrable representation then there is a certain polynomial
function $\Pf(\lambda)$ on the linear dual space $\gz^*$ of the Lie algebra of
$Z$ that is key to harmonic analysis on $N$.  Here $\Pf(\lambda)$ is the
Pfaffian of the antisymmetric bilinear form on $\gn / \gz$ given by
$b_\lambda(x,y) = \lambda([x,y])$.  The square integrable
representations of $N$ are the 
$\pi_\lambda$ where $\lambda \in \gz^*$ with $\Pf(\lambda) \ne 0$,
Plancherel almost all irreducible unitary representations of $N$ are square
integrable, and up to an explicit constant 
$|\Pf(\lambda)|$ is the Plancherel density of the unitary
dual $\widehat{N}$ at $\pi_\lambda$.  
This theory has proved to have serious analytic consequences.  For example,
for most commutative nilmanifolds $G/K$, i.e. Gelfand pairs $(G,K)$ 
where a nilpotent subgroup $N$ of $G$ acts transitively on $G/K$, the
group $N$ has square integrable representations \cite{W2007}.
And it is known just which maximal parabolic subgroups of semisimple Lie groups
have square integrable nilradical \cite{W1979}.
\medskip

In \cite{W2012} and \cite{W2013} the theory of square integrable nilpotent
groups was extended to ``stepwise
square integrable'' nilpotent groups.
By definition they are the connected simply connected nilpotent Lie groups
that satisfy (\ref{setup}) just below.  We use $L$ and $\gl$ to avoid 
conflict of
notation with the $M$ and $\gm$ of minimal parabolic subgroups.
$Z_r$ denotes the center of $L_r$ and $\gv_r$ is a vector space complement
to $\gz_r$ in $\gl_r$.
\begin{equation}\label{setup}
\begin{aligned}
N &= L_1L_2\dots L_{m-1}L_m \text{ where }\\
 &\text{(a) each $L_r$ has unitary representations with coefficients in
$\cL^2(L_r/Z_r)$,} \\
 &\text{(b) each } N_r := L_1L_2\dots L_r \text{ is a normal subgroup of } N
   \text{ with } N_r = N_{r-1}\rtimes L_r \text{ semidirect,}\\
 &\text{(c) decompose }\gl_r = \gz_r + \gv_r \text{ and } \gn = \gs + \gv
        \text{ as vector direct sums where } \\
 &\phantom{XXXX}\gs = \oplus\, \gz_r \text{ and } \gv = \oplus\, \gv_r;
    \text{ then } [\gl_r,\gz_s] = 0 \text{ and } [\gl_r,\gl_s] \subset \gv
        \text{ for } r > s\,.
\end{aligned}
\end{equation}
The choice of the $\gv_r$ is not important in (\ref{setup}), as long
as $[\gl_r,\gl_s] \subset \gv \text{ for } r > s$\,, because 
integration and Lie brackets in $\gl_r$ are really over $\gl_r/\gz_r$
rather than $\gv_r$\,.  Denote
\begin{equation}\label{c-d}
\begin{aligned}
&\text{(a) }d_r = \tfrac{1}{2}\dim(\gl_r/\gz_r) \text{ so }
        \tfrac{1}{2} \dim(\gn/\gs) = d_1 + \dots + d_m\,,
        \text{ and } c = 2^{d_1 + \dots + d_m} d_1! d_2! \dots d_m!\\
&\text{(b) }b_{\lambda_r}: (x,y) \mapsto \lambda([x,y])
        \text{ viewed as a bilinear form on } \gl_r/\gz_r \\
&\text{(c) }S = Z_1Z_2\dots Z_m = Z_1 \times \dots \times Z_m \text{ where } Z_r
        \text{ is the center of } L_r \\
&\text{(d) }\Pf: \text{ polynomial } \Pf(\lambda) = \Pf_{\gl_1}(b_{\lambda_1})
        \Pf_{\gl_2}(b_{\lambda_2})\dots \Pf_{\gl_m}(b_{\lambda_m}) \text{ on } 
	\gs^* \\
&\text{(e) }\gt^* = \{\lambda \in \gs^* \mid \Pf(\lambda) \ne 0\} \\
&\text{(f) } \pi_\lambda \in \widehat{N} \text{ where } \lambda \in \gt^*:
    \text{ irreducible unitary rep. of } N = L_1L_2\dots L_m
\end{aligned}
\end{equation}

We recall the Schwartz space $\cC(N)$, following the lines of the exposition in 
\cite[Section 1]{C1989}.  Start with a norm on $N$. For example the operator
norm $||\alpha(x)||$, where $\alpha$ is a faithful finite dimensional 
representation of $N$ by unipotent linear transformations of a Hilbert space,  
defines a norm $|x| = \sup(||\alpha(x)|| , ||\alpha(x^{-1})||)$.
Or one can use $|x| = (1 + distance(x,1)^2)$ 
with a left invariant Riemannian metric on $N$.
The properties we need are that the norm be continuous and satisfy 
(i) $|1| = 1$, (ii) $|x| \geqq 1$, (iii) $|x^{-1}| = |x|$ and
(iv) $|x|\cdot |y|^{-1} \leqq |xy| \leqq |x|\cdot |y|$.
Write $\ell$ for the left action of the 
universal enveloping algebra $\cU(\gn)$ on $C^\infty(N)$ and 
$r$ for the right action.  
The {\sl Schwartz space} $\cC(N)$, also called the 
space of rapidly decreasing smooth functions on $N$, consists of all 
$f\in C^\infty(N)$ such that 
$$
\nu_{a,k}(f) :=
\sup_{x \in N} |x|^k |\ell(a)(f)(x)| < \infty \text{ for every }
	a \in \cU(\gn) \text{ and every integer } k \geqq 0.
$$
The seminorms $\nu_{a,k}$ define a nuclear Fr\' echet space topology on
$\cC(N)$ and we have continuous inclusions
$C^\infty(N) \hookrightarrow \cC(N) \hookrightarrow \cL^2(N)$
with dense images.  
Two continuous norms that satisfy our conditions (i) through (iv) 
are equivalent (each bounded by a multiple of the other), 
so they give the same Schwartz space.
If $f \in \cC(N)$ then $\ell(a)\,r(b)\,(f) \in \cC(N) \subset \cL^2(N)$ 
for all $a,b \in \cU(\gn)$.
Since $N$ is connected, simply connected and nilpotent, the exponential map 
$\exp: \gn \to N$ is polynomial, and $f \in \cC(N)$ if and only if its lift 
$f_1(\xi) = f(\exp(\xi))$ belongs to the classical Schwartz space of the real
vector space $\gn$.  
\medskip

If $\pi \in \widehat{N}$ and
$f \in \cC(N)$ then $\pi(f) := \int_N f(x)\pi(x)dx$ is trace class and
$\Theta_\pi : f \mapsto \tr \pi(f)$ is a tempered distribution
(distribution that extends by continuity from $C_c^\infty$ to $\cC$) on 
$N$ called the {\sl distribution character} of $\pi$.  The point, now,
is that Plancherel measure on $\widehat{N}$ is concentrated on 
$\{\pi_\lambda \mid \lambda \in \gt^*\}$, and

\begin{theorem}\label{plancherel-general}
Let $N$ be a connected simply connected nilpotent Lie group that
satisfies {\rm (\ref{setup})}.  Then Plancherel measure for $N$ is
concentrated on $\{\pi_\lambda \mid \lambda \in \gt^*\}$.
If $\lambda \in \gt^*$, and if $u$ and $v$ belong to the
representation space $\cH_{\pi_\lambda}$ of $\pi_\lambda$,  then
the coefficient $f_{u,v}(x) = \langle u, \pi_\nu(x)v\rangle$
satisfies
\begin{equation}
||f_{u,v}||^2_{\cL^2(N / S)} = \frac{||u||^2||v||^2}{|\Pf(\lambda)|}\,.
\end{equation}
Recall $c = 2^{d_1 + \dots + d_m} d_1! d_2! \dots d_m!$ from $(\ref{c-d}(a))$.
Then the distribution character $\Theta_{\pi_\lambda}$ of $\pi_{\lambda}$ 
satisfies
\begin{equation}
\Theta_{\pi_\lambda}(f) = c^{-1}|\Pf(\lambda)|^{-1}\int_{\cO(\lambda)}
        \widehat{f_1}(\xi)d\nu_\lambda(\xi) \text{ for } f \in \cC(N)
\end{equation}
where $\cC(N)$ is the Schwartz space, $f_1$ is the lift
$f_1(\xi) = f(\exp(\xi))$, $\widehat{f_1}$ is its classical Fourier transform,
$\cO(\lambda)$ is the coadjoint orbit $\Ad^*(N)\lambda = \gv^* + \lambda$,
and $d\nu_\lambda$ is the translate of normalized Lebesgue measure from
$\gv^*$ to $\Ad^*(N)\lambda$.  The Plancherel Formula on $N$ is
\begin{equation}
f(x) = c\int_{\gt^*} \Theta_{\pi_\lambda}(r_xf) |\Pf(\lambda)|d\lambda
        \text{ for } f \in \cC(N).
\end{equation}
\end{theorem}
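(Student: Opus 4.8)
The plan is to induct on the length $m$ of the decomposition $N = L_1L_2\cdots L_m$, using the tower $N_1 \subset N_2 \subset \cdots \subset N_m = N$ and the semidirect structure $N_r = N_{r-1}\rtimes L_r$ of (\ref{setup}b). The case $m=1$ is the Moore--Wolf theory of a single square integrable nilpotent group \cite{MW1973}: for $\lambda_1\in\gt^*$ the form $b_{\lambda_1}$ is nondegenerate on $\gl_1/\gz_1$, so Stone--von Neumann produces a unique $\pi_{\lambda_1}\in\widehat{L_1}$ with central character $\exp(i\lambda_1)$; its coefficients lie in $\cL^2(L_1/Z_1)$ with $\|f_{u,v}\|^2 = \|u\|^2\|v\|^2/|\Pf_{\gl_1}(b_{\lambda_1})|$; Plancherel measure on $\widehat{L_1}$ is carried by $\{\pi_{\lambda_1}\}$; and Kirillov's character formula expresses $\Theta_{\pi_{\lambda_1}}$ as the orbital integral in the statement, the constant being $c_1 = 2^{d_1}d_1!$ from comparing the canonical (Liouville) measure on the orbit with the Lebesgue measure $d\nu_{\lambda_1}$.

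For the inductive step I would assume the theorem for $N_{m-1}$ with its data $S_{m-1} = Z_1\times\cdots\times Z_{m-1}$, $\Pf_{\gl_1}\cdots\Pf_{\gl_{m-1}}$, $c_{m-1} = 2^{d_1+\cdots+d_{m-1}}d_1!\cdots d_{m-1}!$, and then analyze $N = N_{m-1}\rtimes L_m$ through the Mackey machine. The relations $[\gl_m,\gz_s]=0$ and $[\gl_m,\gl_s]\subset\gv$ for $s<m$ are exactly what make this work: $L_m$ fixes the $S_{m-1}$-central data of $\pi_{\lambda'}$, where $\lambda' = \lambda|_{\gs_{m-1}}$, so the little group is all of $L_m$, the representation $\pi_{\lambda'}$ extends to $N_{m-1}L_m = N$, and nondegeneracy of $b_{\lambda_m}$ on $\gl_m/\gz_m$ produces, by Stone--von Neumann in the $L_m$-direction, a unique irreducible $\pi_\lambda$, with a compatible tensor decomposition $\cH_{\pi_\lambda}\cong\cH_{\pi_{\lambda'}}\otimes\cH_{\pi_{\lambda_m}}$. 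Kirillov's correspondence matches $\pi_\lambda$ with the coadjoint orbit $\Ad^*(N)\lambda$, and since $\lambda$ vanishes on $\gv$, a direct computation of $\ad^*$ using (\ref{setup}c) and $\lambda\in\gt^*$ shows the coadjoint stabilizer of $\lambda$ is exactly $\gs$; hence this orbit is the affine subspace $\gv^*+\lambda$ and $d\nu_\lambda$ is Lebesgue transported to it. As $\lambda$ runs over $\gt^*$ these orbits are pairwise disjoint, their union $\gt^*+\gv^*$ is the complement of a Lebesgue-null set in $\gn^* = \gs^*\oplus\gv^*$, and any $N$-coadjoint orbit meeting $\gt^*+\gv^*$ lies inside it; so Kirillov theory gives that Plancherel measure for $N$ is carried by $\{\pi_\lambda\mid\lambda\in\gt^*\}$.

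The coefficient identity I would get from the tensor decomposition together with a Fubini argument on the fibration $N/S\to N_{m-1}/S_{m-1}$ with fiber $L_m/Z_m$: a coefficient of $\pi_\lambda$ restricted to the fiber is a coefficient of the oscillator-type representation of $L_m$ attached to $b_{\lambda_m}$, contributing the factor $|\Pf_{\gl_m}(b_{\lambda_m})|^{-1}$, and integrating the residual coefficient over $N_{m-1}/S_{m-1}$ and invoking the inductive hypothesis yields the product $|\Pf(\lambda)|^{-1} = \prod_r|\Pf_{\gl_r}(b_{\lambda_r})|^{-1}$; the Schwartz estimates recalled in Section \ref{sec1} justify the interchanges. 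The character formula is handled the same way: the character of an irreducible built by this induction is again the Kirillov orbital integral over $\Ad^*(N)\lambda = \gv^*+\lambda$, now with normalizing constant $c = c_{m-1}c_m = 2^{d_1+\cdots+d_m}d_1!\cdots d_m!$, the new factor $c_m = 2^{d_m}d_m!$ recording the Liouville-versus-Lebesgue normalization in the $L_m$-direction and the block factorization $\Pf(b_\lambda) = \prod_r\Pf_{\gl_r}(b_{\lambda_r})$ making everything multiplicative. Finally the Plancherel formula is formal once the first three parts are in hand: inserting the character formula into the right side turns $c\int_{\gt^*}\Theta_{\pi_\lambda}(r_xf)|\Pf(\lambda)|\,d\lambda$ into $\int_{\gt^*}\!\int_{\cO(\lambda)}\widehat{(r_xf)_1}(\xi)\,d\nu_\lambda(\xi)\,d\lambda$, and since the orbits $\gv^*+\lambda$ $(\lambda\in\gt^*)$ foliate a full-measure subset of $\gn^*$ with $d\nu_\lambda$ the fiber Lebesgue measure, Fubini collapses this to $\int_{\gn^*}\widehat{(r_xf)_1}(\xi)\,d\xi = (r_xf)_1(0) = f(x)$ by Fourier inversion.

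I expect the main obstacle to be the bookkeeping in the inductive step: proving that the Mackey construction for $N_{m-1}\rtimes L_m$ yields a \emph{single} irreducible carrying the prescribed central character on all of $S$ --- this is precisely where the bracket relations (\ref{setup}c) and the nondegeneracy $\lambda\in\gt^*$ enter --- and then tracking exactly how the distribution character and the formal degree transform under that construction, so that the constant comes out as $c = 2^{d_1+\cdots+d_m}d_1!\cdots d_m!$ and the Pfaffian product as stated rather than off by a power of $2$ or a factorial. The purely analytic points, that everything lives in the Schwartz space $\cC(N)$ and that the Fourier and Fubini manipulations are legitimate there, are routine given the estimates of Section \ref{sec1} but must be recorded with care because $N/S$ is noncompact.
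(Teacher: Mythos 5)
Your proposal is correct and follows essentially the same route as the paper's source: Theorem \ref{plancherel-general} is quoted here from \cite{W2012} and \cite{W2013}, where $\pi_\lambda$ is built precisely by the recursive extend-and-tensor construction through the tower $N_r = N_{r-1}\rtimes L_r$ that you describe (the paper itself recalls this construction in the proof of Lemma \ref{no-obstruction}), with the coefficient norms obtained by Fubini over the fibration, the character by the orbit method on $\gv^*+\lambda$, and the inversion formula by integrating over the foliation of a full-measure subset of $\gn^*$ by those affine orbits. There is no substantive divergence from the argument of the cited references.
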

\begin{definition}\label{stepwise2}
{\rm The representations $\pi_\lambda$ of (\ref{c-d}(f)) are the
{\it stepwise square integrable} representations of $N$ relative to
the decomposition (\ref{setup}).}\hfill $\diamondsuit$
\end{definition}

One of the main results of \cite{W2012} and \cite{W2013} is that nilradicals
of minimal parabolic subgroups are stepwise square integrable.  Even the
simplest case, the case of a minimal parabolic in $SL(n;\R)$, was a big
improvement over earlier results on the group of strictly upper triangular
real matrices.  Here we extend the results of \cite{W2012} and \cite{W2013}
to obtain explicit Plancherel Formulae for the minimal parabolic $P$ itself.
This is done by construction of a Dixmier--Puk\' anszky operator on $\cL^2(P)$,
i.e. a pseudo--differential operator that compensates lack of unimodularity
on $P$.  The Dixmier--Puk\' anszky operator is explicit; it is 
constructed from the Pfaffian polynomials of (\ref{c-d}d).  The construction
gives a beautiful relation between the Dixmier--Puk\' anszky operator of $P$
and the Plancherel density of its nilradical.
\medskip

In Section \ref{sec2} we review the restricted root structure, stepwise
square integrable representations, character formulae and the Plancherel
(or Fourier Inversion) Formula for nilradicals of minimal parabolic
subgroups.  Some of the restricted root results are discussed further
in Section \ref{sec7}, a sort of appendix, 
where we placed them because they add to, but are not needed
for, the main results.
\medskip

Is Section \ref{sec3} we discuss the structure and action of the group
$M$ in a minimal parabolic $P = MAN$.  The notion of principal orbit 
gives a uniform description of the stabilizers of stepwise square
integrable representations of $N$.  We also show triviality of a certain
Mackey obstruction, leading to an explicit Plancherel Formula for $MN$.
\medskip

In Section \ref{sec4} we work out the Dixmier--Puk\' anszky operator of $P$
in terms of the Pfaffian (which gives Plancherel density on $N$) and a
certain explicit ``quasi--central determinant'' polynomial.
\medskip

In Section \ref{sec5} we apply the Mackey machine to give an explicit
description of subsets of $\widehat{P}$ and $\widehat{AN}$ that
carry Plancherel measure.  The point here is that the description
is explicit.
\medskip

Finally in Section \ref{sec6} we give explicit Plancherel Formulae for
the minimal parabolic subgroups $P = MAN$ and their exponential
solvable subgroups $AN$.

\section{Minimal Parabolics: Structure of the Nilradical}
\label{sec2}
\label{iwasawa}
\setcounter{equation}{0}
Let $G$ be a real reductive Lie group.  We recall some structural results
on its minimal parabolic subgroups, some standard and some from \cite{W2013}.
\medskip

Fix an Iwasawa decomposition $G = KAN$.  As usual, write $\gk$ for the Lie 
algebra of $K$, $\ga$ for the Lie algebra of $A$, and $\gn$ for the
Lie algebra of $N$.  Complete $\ga$ to a Cartan subalgebra $\gh$ of $\gg$.
Then $\gh = \gt + \ga$ with $\gt = \gh \cap \gk$.  Now we have root systems
\begin{itemize}
\item $\Delta(\gg_\C,\gh_\C)$: roots of $\gg_\C$ relative to $\gh_\C$ 
(ordinary roots), and

\item $\Delta(\gg,\ga)$: roots of $\gg$ relative to $\ga$ (restricted roots). 

\item $\Delta_0(\gg,\ga) = \{\gamma \in \Delta(\gg,\ga) \mid 
	2\gamma \notin \Delta(\gg,\ga)\}$ (nonmultipliable restricted roots).
\end{itemize}
Sometimes we will identify a restricted root
$\gamma = \alpha|_\ga$, $\alpha \in \Delta(\gg_\C,\gh_\C)$ and 
$\alpha |_\ga \ne 0$, with the set 
\begin{equation}\label{resrootset}
[\gamma] := 
\{\alpha' \in \Delta(\gg_\C,\gh_\C) \mid \alpha'|_\ga = \alpha|_\ga\}
\end{equation}
of all roots that restrict to it.  Further, 
$\Delta(\gg,\ga)$ and $\Delta_0(\gg,\ga)$ are root 
systems in the usual sense.  Any positive system 
$\Delta^+(\gg_\C,\gh_\C) \subset \Delta(\gg_\C,\gh_\C)$ defines positive 
systems
\begin{itemize}
\item $\Delta^+(\gg,\ga) = \{\alpha|_\ga \mid \alpha \in 
\Delta^+(\gg_\C,\gh_\C) 
\text{ and } \alpha|_\ga \ne 0\}$ and $\Delta_0^+(\gg,\ga) =
\Delta_0(\gg,\ga) \cap \Delta^+(\gg,\ga)$.
\end{itemize}
\noindent We can (and do) choose $\Delta^+(\gg,\gh)$ so that 
\begin{itemize}
\item$\gn$ is the sum of the positive restricted root spaces and
\item if $\alpha \in \Delta(\gg_\C,\gh_\C)$ and $\alpha|_\ga \in
\Delta^+(\gg,\ga)$ then $\alpha \in \Delta^+(\gg_\C,\gh_\C)$.
\end{itemize}
\medskip

Two roots are called {\em strongly orthogonal} if their sum and their
difference are not roots.  Then they are orthogonal.  We define
\begin{equation}\label{cascade}
\begin{aligned}
&\beta_1 \in \Delta^+(\gg,\ga) \text{ is a maximal positive restricted root
and }\\
& \beta_{r+1} \in \Delta^+(\gg,\ga) \text{ is a maximum among the roots of }
\Delta^+(\gg,\ga) \text{ orthogonal to all } \beta_i \text{ with } i \leqq r
\end{aligned}
\end{equation}
Then the $\beta_r$ are mutually strongly orthogonal.  This is Kostant's
cascade construction.  Note that each $\beta_r \in \Delta_0^+(\gg,\ga)$.
Also note that $\beta_1$ is unique if and only if $\Delta(\gg,\ga)$ is
irreducible.
\medskip

For $1\leqq r \leqq m$ define 
\begin{equation}\label{layers}
\begin{aligned}
&\Delta^+_1 = \{\alpha \in \Delta^+(\gg,\ga) \mid \beta_1 - \alpha \in \Delta^+(\gg,\ga)\} 
\text{ and }\\
&\Delta^+_{r+1} = \{\alpha \in \Delta^+(\gg,\ga) \setminus (\Delta^+_1 \cup \dots \cup \Delta^+_r)
	\mid \beta_{r+1} - \alpha \in \Delta^+(\gg,\ga)\}.
\end{aligned}
\end{equation} 

\begin{lemma} \label{fill-out} {\rm \cite[Lemma 6.3]{W2013}}
If $\alpha \in \Delta^+(\gg,\ga)$ then either 
$\alpha \in \{\beta_1, \dots , \beta_m\}$
or $\alpha$ belongs to exactly one of the sets $\Delta^+_r$\,.
In particular the Lie algebra $\gn$ of $N$ is the
vector space direct sum of its subspaces
\begin{equation}\label{def-m}
\gl_r = \gg_{\beta_r} + {\sum}_{\Delta^+_r}\, \gg_\alpha 
\text{ for } 1\leqq r\leqq m
\end{equation}
\end{lemma}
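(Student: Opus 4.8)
The plan is to prove the two assertions of Lemma \ref{fill-out} by induction on $r$, peeling off one ``layer'' $\Delta^+_r$ at a time and showing that the cascade step removes exactly the roots one expects. First I would record the basic strong-orthogonality facts about the cascade: since $\beta_r$ is a nonmultipliable positive restricted root of maximal height among those orthogonal to $\beta_1,\dots,\beta_{r-1}$, for any $\alpha$ orthogonal to all earlier $\beta_i$ one has $\langle\beta_r,\alpha\rangle\geqq 0$ (otherwise $\beta_r+\alpha$ would be a strictly higher root still orthogonal to the earlier $\beta_i$, contradicting maximality of $\beta_r$), and the $\beta_r$-string through $\alpha$ has length at most two because $\beta_r$ is long (nonmultipliable) in the root subsystem it generates. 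Hence for $\alpha\neq\beta_r$ orthogonal to $\beta_1,\dots,\beta_{r-1}$ with $\langle\beta_r,\alpha\rangle>0$ one gets $\beta_r-\alpha\in\Delta(\gg,\ga)$, and a sign/height check shows $\beta_r-\alpha\in\Delta^+(\gg,\ga)$; this is the mechanism that places $\alpha$ in $\Delta^+_r$.

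Next I would set up the induction. Let $\Phi_r=\Delta^+(\gg,\ga)\setminus(\Delta^+_1\cup\dots\cup\Delta^+_r\cup\{\beta_1,\dots,\beta_r\})$, and let $\Psi_r$ be the set of positive restricted roots orthogonal to each of $\beta_1,\dots,\beta_r$. The key claim to carry through the induction is $\Phi_r=\Psi_r$: at stage $r$, $\beta_{r+1}$ is by definition the maximal element of $\Psi_r$, and I must show that the roots removed when passing from $\Psi_r$ to $\Psi_{r+1}$ — i.e. those in $\Psi_r$ not orthogonal to $\beta_{r+1}$ — are precisely $\{\beta_{r+1}\}\cup\Delta^+_{r+1}$, with $\Delta^+_{r+1}$ disjoint from everything removed before. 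One inclusion is the paragraph above: if $\alpha\in\Psi_r$, $\alpha\neq\beta_{r+1}$, and $\langle\beta_{r+1},\alpha\rangle>0$, then $\beta_{r+1}-\alpha\in\Delta^+(\gg,\ga)$, so $\alpha\in\Delta^+_{r+1}$ provided $\alpha\notin\Delta^+_1\cup\dots\cup\Delta^+_r$, which holds since $\alpha\in\Psi_r=\Phi_r$ by the inductive hypothesis. The reverse inclusion: if $\alpha\in\Delta^+_{r+1}$ then $\beta_{r+1}-\alpha$ is a positive root, forcing $\langle\beta_{r+1},\alpha\rangle>0$, so $\alpha$ is not orthogonal to $\beta_{r+1}$; and $\alpha\notin\Delta^+_1\cup\dots\cup\Delta^+_r$ by definition, while $\alpha\notin\{\beta_1,\dots,\beta_r\}$ because those are removed at earlier stages, so $\alpha\in\Phi_r=\Psi_r$, hence $\alpha\notin\Psi_{r+1}$. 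The process terminates when $\Psi_m$ is empty (no positive root left orthogonal to all $\beta_i$); every $\alpha\in\Delta^+(\gg,\ga)$ is thus removed at a unique stage, either as some $\beta_r$ or as a member of a unique $\Delta^+_r$, which is the first assertion. The direct-sum statement \eqref{def-m} is then immediate: $\gn=\sum_{\alpha\in\Delta^+(\gg,\ga)}\gg_\alpha$ as a vector space, and the index set is partitioned as $\{\beta_1,\dots,\beta_m\}\sqcup\Delta^+_1\sqcup\dots\sqcup\Delta^+_m$, grouping the $\beta_r$ term with $\Delta^+_r$.

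The main obstacle I anticipate is the careful verification that $\beta_r - \alpha$ lands in the \emph{positive} system — i.e. ruling out $\alpha - \beta_r \in \Delta^+(\gg,\ga)$ — and, relatedly, controlling the restricted root strings when the restricted root system is non-reduced (type $BC$), where $\beta_r$ could in principle interact with both $\alpha$ and $2\alpha$. This is where one must lean on $\beta_r\in\Delta_0^+(\gg,\ga)$ (noted after \eqref{cascade}) and on a height/positivity argument: $\beta_r$ has maximal height in $\Psi_{r-1}$, so $\alpha$ strictly lower forces $\beta_r-\alpha$ to be positive whenever it is a root. A secondary bookkeeping point is disjointness of the $\Delta^+_r$ across stages, but that is built into the definition \eqref{layers} via the explicit removal of $\Delta^+_1\cup\dots\cup\Delta^+_r$, so it costs nothing once the identification $\Phi_r=\Psi_r$ is in hand. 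Since the statement is quoted from \cite[Lemma 6.3]{W2013}, I would also remark that the reader may simply cite that source; the sketch above indicates why it is true.
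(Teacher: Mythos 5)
The paper gives no proof of this lemma at all---it is quoted verbatim from \cite[Lemma 6.3]{W2013}---and your induction on the cascade, identifying the complement $\Phi_r$ of $\{\beta_1,\dots,\beta_r\}\cup\Delta^+_1\cup\dots\cup\Delta^+_r$ with the set $\Psi_r$ of positive restricted roots orthogonal to $\beta_1,\dots,\beta_r$, is exactly the argument behind the cited result and behind Lemma \ref{layers2} here, so your route is the intended one and is correct. The only step you state too quickly is in the reverse inclusion: $\beta_{r+1}-\alpha\in\Delta^+(\gg,\ga)$ does not by itself ``force'' $\langle\beta_{r+1},\alpha\rangle>0$; you get it by first checking $\alpha\perp\beta_i$ for $i\leqq r$ (strong orthogonality of the $\beta_i$ keeps the earlier $\beta_i$ out of $\Delta^+_{r+1}$), whence maximality of $\beta_{r+1}$ rules out $\beta_{r+1}+\alpha$ being a root, and the $\alpha$-string through $\beta_{r+1}$ then yields the strict inequality.
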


\begin{lemma}\label{layers2}{\rm \cite[Lemma 6.4]{W2013}}
The set $\Delta^+_r\cup \{\beta_r\}  
= \{\alpha \in \Delta^+ \mid \alpha \perp \beta_i \text{ for } i < r
\text{ and } \langle \alpha, \beta_r\rangle > 0\}.$
In particular, $[\gl_r,\gl_s] \subset \gl_t$ where $t = \min\{r,s\}$.
Thus $\gn$ has an increasing filtration
by ideals
\begin{equation}\label{def-filtration}
\gn_r = \gl_1 + \gl_2 + \dots + \gl_r \text{ for } 1 \leqq r \leqq m
\end{equation}
with a corresponding group level decomposition by normal subgroups $N_r$ where
\begin{equation}\label{def-filtration-group}
N = L_1L_2\dots L_m \text{ with } N_r = N_{r-1}\rtimes L_r
	\text{ for } 1 \leqq r \leqq m.
\end{equation}
\end{lemma}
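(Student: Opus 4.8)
The plan is to establish the set identity first and then deduce everything else from it. Write $\Delta^+ = \Delta^+(\gg,\ga)$, put $\Psi_r := \{\alpha\in\Delta^+ \mid \alpha\perp\beta_i \text{ for } i<r \text{ and } \langle\alpha,\beta_r\rangle>0\}$ for the claimed right-hand side, and let $\Delta_{(r)}\subseteq\Delta$ be the sub-root-system of roots orthogonal to $\beta_1,\dots,\beta_{r-1}$, with $\Delta_{(r)}^+ := \Delta_{(r)}\cap\Delta^+$. The content of the cascade construction (\ref{cascade}) is that $\beta_r$ is a highest root of $\Delta_{(r)}$; consequently $\langle\alpha,\beta_r\rangle\geqq 0$ and $\beta_r+\alpha\notin\Delta$ for every $\alpha\in\Delta_{(r)}^+$, and the cascade is exhausted in the sense that $\Delta_{(m+1)}=\emptyset$. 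I would then record three elementary facts: (i) the $\Psi_r$ are pairwise disjoint, since $\alpha\in\Psi_r\cap\Psi_s$ with $s<r$ would force both $\alpha\perp\beta_s$ and $\langle\alpha,\beta_s\rangle>0$; (ii) $\bigcup_r\Psi_r=\Delta^+$, because for $\alpha\in\Delta^+$ the least $r$ with $\langle\alpha,\beta_r\rangle\neq 0$ exists (else $\alpha\in\Delta_{(m+1)}^+$) and then $\alpha\in\Delta_{(r)}^+$ forces $\langle\alpha,\beta_r\rangle>0$; and (iii) $\beta_r\in\Psi_r$, by strong orthogonality of the cascade roots. Granting these, Lemma \ref{fill-out} writes $\Delta^+$ as the disjoint union of $\{\beta_1\},\dots,\{\beta_m\}$ and $\Delta^+_1,\dots,\Delta^+_m$, so once we also know $\Delta^+_r\subseteq\Psi_r$ for every $r$, comparing this partition with the partition $\{\Psi_r\}$ forces $\Psi_r=\Delta^+_r\cup\{\beta_r\}$.

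The inclusion $\Delta^+_r\subseteq\Psi_r$ is the real work, and I expect it to be the main obstacle. I would prove it by induction on $r$. For $r=1$, if $\alpha\in\Delta^+_1$ then $\beta_1-\alpha\in\Delta^+$; were $\langle\alpha,\beta_1\rangle=0$ the $\alpha$-string through the highest root $\beta_1$ would be symmetric, giving $\beta_1+\alpha\in\Delta$, which is impossible, so $\langle\alpha,\beta_1\rangle>0$ and $\alpha\in\Psi_1$. For the inductive step let $\alpha\in\Delta^+_r$, so $\beta_r-\alpha\in\Delta^+$ and $\alpha\notin\Delta^+_1\cup\dots\cup\Delta^+_{r-1}$, and by Lemma \ref{fill-out} $\alpha$ is not a cascade root. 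If $\alpha\not\perp\beta_i$ for some $i<r$, take the least such $i$; then $\alpha\in\Delta_{(i)}^+$ and $\langle\alpha,\beta_i\rangle>0$, so $\beta_i-\alpha\in\Delta$ by the string argument, and $\beta_i-\alpha$ is positive (otherwise $\alpha-\beta_i\in\Delta_{(i)}^+$ and $\alpha=\beta_i+(\alpha-\beta_i)$ lies above the highest root $\beta_i$ of $\Delta_{(i)}$). Since moreover $\alpha\notin\Delta^+_1\cup\dots\cup\Delta^+_{i-1}$, the definition (\ref{layers}) puts $\alpha\in\Delta^+_i$, contradicting $i\leqq r-1$. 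Hence $\alpha\in\Delta_{(r)}^+$, so $\langle\alpha,\beta_r\rangle\geqq 0$, and the value $0$ is excluded exactly as in the case $r=1$ using $\beta_r-\alpha\in\Delta^+$; thus $\alpha\in\Psi_r$.

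The remaining assertions follow formally. For the bracket relation assume $r\geqq s$ (so $\min\{r,s\}=s$); since $[\gg_\alpha,\gg_{\alpha'}]\subseteq\gg_{\alpha+\alpha'}$ and every root occurring lies in $\Delta^+$, it suffices to check that $\alpha\in\Psi_r$, $\alpha'\in\Psi_s$ and $\alpha+\alpha'\in\Delta$ imply $\alpha+\alpha'\in\Psi_s$: for $i<s$ both $\langle\alpha,\beta_i\rangle$ and $\langle\alpha',\beta_i\rangle$ vanish, and $\langle\alpha+\alpha',\beta_s\rangle=\langle\alpha,\beta_s\rangle+\langle\alpha',\beta_s\rangle>0$ because $\langle\alpha',\beta_s\rangle>0$ while $\langle\alpha,\beta_s\rangle\geqq 0$ (it vanishes if $s<r$). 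By Part 1 this gives $[\gl_r,\gl_s]\subseteq\gl_s=\gl_{\min\{r,s\}}$, and in particular (taking $r=s$) each $\gl_r$ is a subalgebra. Summing over the $\gl_j$, $[\gn,\gn_r]\subseteq\sum_{j,\,s\leqq r}\gl_{\min\{j,s\}}\subseteq\gn_r$, so $\gn_r$ is an ideal; together with $\gn_m=\gn$ from Lemma \ref{fill-out} this is the filtration (\ref{def-filtration}). Finally, because $N$ is simply connected nilpotent and $\gn_r=\gn_{r-1}\oplus\gl_r$ as vector spaces with $\gn_{r-1}$ an ideal and $\gl_r$ a subalgebra, exponentiation produces closed subgroups $N_r=\exp\gn_r$ and $L_r=\exp\gl_r$ with $N_r$ normal in $N$ and $N_r=N_{r-1}\rtimes L_r$, hence $N=N_m=L_1L_2\cdots L_m$, which is (\ref{def-filtration-group}).
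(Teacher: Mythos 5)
The paper gives no proof of this lemma --- it is quoted from \cite[Lemma 6.4]{W2013} --- so there is nothing in the present text to compare against line by line. Your argument is correct and self-contained, and it runs along the standard cascade lines that the paper itself uses elsewhere (the maximality argument in the proof of Lemma \ref{neg-no-prob}, and the discussion of $\sigma_r$ preceding Lemma \ref{layers-nilpotent}): the partition $\{\Psi_r\}$ via dominance of $\beta_r$ in the subsystem orthogonal to $\beta_1,\dots,\beta_{r-1}$, the root-string argument to exclude $\langle\alpha,\beta_r\rangle=0$ when $\beta_r-\alpha$ is a root, and the induction showing $\Delta^+_r\subseteq\Psi_r$. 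Two small points worth making explicit in a final write-up: restricted root systems may be non-reduced, so the root-string lemma requires $\alpha\notin\R\beta_r$ --- harmless here, since in the cases where you invoke it either $\langle\alpha,\beta_r\rangle=0$ (which already forces non-proportionality) or $\alpha=\tfrac12\beta_r$ (where the desired conclusion is immediate); and the final partition comparison needs only that each $\Delta^+_r\cup\{\beta_r\}$ lands inside $\Psi_r$ and that their union is $\Delta^+$, since disjointness is then inherited from the $\Psi_r$, which is exactly how your argument is arranged.
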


The structure of $\Delta^+_r$, and later of $\gl_r$, is exhibited by a 
particular Weyl group element $s_{\beta_r} \in W(\gg,\ga)$ and its
negative.  Specifically,
\begin{equation}\label{beta-reflect}
s_{\beta_r} \text{ is the Weyl group reflection in } \beta_r
\text{ and } \sigma_r: \Delta(\gg,\ga) \to \Delta(\gg,\ga) \text{ by }
\sigma_r(\alpha) = -s_{\beta_r}(\alpha).
\end{equation}
Here $\sigma_r(\beta_s) = -\beta_s$ for $s \ne r$, $+\beta_s$ if $s = r$.
If $\alpha \in \Delta^+_r$ we still have $\sigma_r(\alpha) \perp \beta_i$
for $i < r$ and $\langle \sigma_r(\alpha), \beta_r\rangle > 0$.  If
$\sigma_r(\alpha)$ is negative then $\beta_r - \sigma_r(\alpha) > \beta_r$
contradicting the maximality property of $\beta_r$.  Thus, using 
Lemma \ref{layers2}, $\sigma_r(\Delta^+_r) = \Delta^+_r$.
This divides each $\Delta^+_r$ into pairs:

\begin{lemma} \label{layers-nilpotent}{\rm \cite[Lemma 6.8]{W2013}}
If $\alpha \in \Delta^+_r$ then $\alpha + \sigma_r(\alpha) = \beta_r$.
{\rm (}Of course it is possible that 
$\alpha = \sigma_r(\alpha) = \tfrac{1}{2}\beta_r$ when 
$\tfrac{1}{2}\beta_r$ is a root.{\rm ).}
If $\alpha, \alpha' \in \Delta^+_r$ and $\alpha + \alpha' \in \Delta(\gg,\ga)$
then $\alpha + \alpha' = \beta_r$\,.
\end{lemma}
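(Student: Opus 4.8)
The plan is to translate both claims into a single statement about the Cartan integer $n := \langle\alpha,\beta_r^\vee\rangle = 2\langle\alpha,\beta_r\rangle/\langle\beta_r,\beta_r\rangle$. Since $s_{\beta_r}(\alpha) = \alpha - n\beta_r$, definition (\ref{beta-reflect}) gives $\sigma_r(\alpha) = -s_{\beta_r}(\alpha) = n\beta_r - \alpha$, so $\alpha + \sigma_r(\alpha) = n\beta_r$ and the first assertion is exactly the claim that $n = 1$. By Lemma \ref{layers2} we have $\langle\alpha,\beta_r\rangle > 0$, so $n$ is a positive integer; everything comes down to showing $n \leqq 1$.

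To do that I would assume $n \geqq 2$ and derive a contradiction with the maximality built into Kostant's cascade (\ref{cascade}). The vector $\sigma_r(\alpha) = n\beta_r - \alpha$ is a root -- indeed it lies in $\Delta^+_r$ by the discussion preceding the lemma, but all that is needed is that the Weyl reflection $s_{\beta_r}$ preserves $\Delta(\gg,\ga)$. Now decompose $(n\beta_r - \alpha) - \beta_r = (n-2)\beta_r + (\beta_r - \alpha)$; here $\beta_r - \alpha \in \Delta^+(\gg,\ga)$ by the definition (\ref{layers}) of $\Delta^+_r$, and $(n-2)\beta_r$ is a nonnegative multiple of the positive root $\beta_r$, so the right-hand side is a nonzero nonnegative combination of positive restricted roots. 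Hence $n\beta_r - \alpha > \beta_r$ in the dominance order, while $n\beta_r - \alpha$ is orthogonal to $\beta_1,\dots,\beta_{r-1}$ because $\alpha$ and $\beta_r$ both are. This contradicts $\beta_r$ being a maximum among the roots of $\Delta^+(\gg,\ga)$ orthogonal to $\beta_1,\dots,\beta_{r-1}$. Therefore $n = 1$ and $\alpha + \sigma_r(\alpha) = \beta_r$; the degenerate possibility $\alpha = \sigma_r(\alpha)$ merely says $2\alpha = \beta_r$, as recorded in the statement, and needs no separate treatment.

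For the second assertion I would set $\delta := \alpha + \alpha' \in \Delta(\gg,\ga)$ and compute, using the first part twice, that $\langle\delta,\beta_r^\vee\rangle = \langle\alpha,\beta_r^\vee\rangle + \langle\alpha',\beta_r^\vee\rangle = 2$. Then $s_{\beta_r}(\delta) = \delta - 2\beta_r$, so $2\beta_r - \delta$ is again a root; it is positive, being the sum $(\beta_r - \alpha) + (\beta_r - \alpha')$ of two positive restricted roots; it is orthogonal to $\beta_1,\dots,\beta_{r-1}$ since $\alpha$ and $\alpha'$ are; and $\langle 2\beta_r - \delta, \beta_r^\vee\rangle = 4 - 2 = 2 > 0$, hence $\langle 2\beta_r - \delta, \beta_r\rangle > 0$. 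By Lemma \ref{layers2} this forces $2\beta_r - \delta \in \Delta^+_r \cup \{\beta_r\}$. If it lay in $\Delta^+_r$, applying the first part of the lemma to it would give $\langle 2\beta_r - \delta, \beta_r^\vee\rangle = 1$, contradicting the value $2$ just found; hence $2\beta_r - \delta = \beta_r$, i.e. $\alpha + \alpha' = \beta_r$.

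The main obstacle, such as it is, is resisting the urge to bound $n$ through length comparisons and the rank-two classification; the cascade-maximality argument above sidesteps all of that, trading it for the single fact that $\sigma_r$ (equivalently $s_{\beta_r}$) carries roots to roots. The one point genuinely needing care is the bookkeeping that $n\beta_r - \alpha$ strictly dominates $\beta_r$ rather than merely being incomparable to it, which is exactly what the decomposition $(n\beta_r - \alpha) - \beta_r = (n-2)\beta_r + (\beta_r - \alpha)$ delivers; I would also confirm that the word ``maximum'' in (\ref{cascade}) is understood within the set of roots orthogonal to $\beta_1,\dots,\beta_{r-1}$, which is how it is used here.
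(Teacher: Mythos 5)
Your argument is correct. The paper does not actually prove this lemma---it only cites \cite[Lemma 6.8]{W2013}---but your derivation is a sound, self-contained version of the standard argument, and it runs on exactly the device the paper itself uses in the paragraph preceding the lemma: any root orthogonal to $\beta_1,\dots,\beta_{r-1}$ that strictly dominates $\beta_r$ contradicts the maximality in the cascade construction. Both halves check out, including the reduction of the first claim to $\langle\alpha,\beta_r^\vee\rangle=1$ and the use of Lemma \ref{layers2} plus the first part to pin down $2\beta_r-\delta=\beta_r$ in the second.
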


It comes out of Lemmas \ref{fill-out} and \ref{layers2} that the 
decompositions of (\ref{layers}), (\ref{def-m}) and
(\ref{def-filtration}) satisfy (\ref{setup}), so
Theorem \ref{plancherel-general}
applies to nilradicals of minimal parabolic subgroups.  In other words,

\begin{theorem}\label{iwasawa-layers}{\rm \cite[Theorem 6.16]{W2013}}
Let $G$ be a real reductive Lie group, $G = KAN$ an Iwasawa
decomposition, $\gl_r$ and $\gn_r$ the subalgebras of $\gn$ defined in 
{\rm (\ref{def-m})} and {\rm (\ref{def-filtration})},
and $L_r$ and $N_r$ the corresponding analytic subgroups of $N$.  
Then the $L_r$ and $N_r$ satisfy {\rm (\ref{setup})}.  In particular,
Plancherel measure for $N$ is
concentrated on $\{\pi_\lambda \mid \lambda \in \gt^*\}$.
If $\lambda \in \gt^*$, and if $u$ and $v$ belong to the
representation space $\cH_{\pi_\lambda}$ of $\pi_\lambda$,  then
the coefficient $f_{u,v}(x) = \langle u, \pi_\lambda(x)v\rangle$
satisfies
\begin{equation}
||f_{u,v}||^2_{\cL^2(N / S)} = \frac{||u||^2||v||^2}{|\Pf(\lambda)|}\,.
\end{equation}
The distribution character $\Theta_{\pi_\lambda}$ of $\pi_{\lambda}$ satisfies
\begin{equation}
\Theta_{\pi_\lambda}(f) = c^{-1}|\Pf(\lambda)|^{-1}\int_{\cO(\lambda)}
        \widehat{f_1}(\xi)d\nu_\lambda(\xi) \text{ for } f \in \cC(N)
\end{equation}
where $\cC(N)$ is the Schwartz space, $f_1$ is the lift
$f_1(\xi) = f(\exp(\xi))$, $\widehat{f_1}$ is its classical Fourier transform,
$\cO(\lambda)$ is the coadjoint orbit $\Ad^*(N)\lambda = \gv^* + \lambda$,
and $d\nu_\lambda$ is the translate of normalized Lebesgue measure from
$\gv^*$ to $\Ad^*(N)\lambda$.  The Plancherel Formula on $N$ is
\begin{equation}
f(x) = c\int_{\gt^*} \Theta_{\pi_\lambda}(r_xf) |\Pf(\lambda)|d\lambda
        \text{ for } f \in \cC(N).
\end{equation}
\end{theorem}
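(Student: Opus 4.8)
The plan is to check that the decomposition $\gn = \gl_1 + \dots + \gl_m$ of Lemma~\ref{fill-out}, with $\gz_r = \gg_{\beta_r}$ and $\gv_r = \sum_{\Delta^+_r}\gg_\alpha$, satisfies conditions (a), (b), (c) of~(\ref{setup}); once that is done, Theorem~\ref{plancherel-general} applies to $N$ verbatim and delivers all four displayed assertions, while the non-emptiness of $\gt^*$ is part of what gets checked along the way. So the entire argument reduces to verifying (\ref{setup}), the structural input being Lemmas~\ref{fill-out}, \ref{layers2} and~\ref{layers-nilpotent}.

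Condition (b) is essentially the content of Lemma~\ref{layers2}: the bracket rule $[\gl_r,\gl_s]\subset\gl_{\min\{r,s\}}$ makes each $\gn_r=\gl_1+\dots+\gl_r$ an ideal of $\gn$, Lemma~\ref{fill-out} makes the sum $\gn_r=\gn_{r-1}\oplus\gl_r$ vector-space direct with $\gn_{r-1}$ an ideal of $\gn_r$, and since everything here is simply connected nilpotent this exponentiates to $N_r=N_{r-1}\rtimes L_r$ with each $N_r$ normal in $N$. For condition (c) the first point is that $\gz_r=\gg_{\beta_r}$ is genuinely the center of $\gl_r$: $[\gg_{\beta_r},\gl_r]=0$ because $\beta_r+\alpha\notin\Delta(\gg,\ga)$ for $\alpha\in\Delta^+_r$ (maximality of $\beta_r$ in the $r$-th layer, Lemma~\ref{layers2}) and $2\beta_r\notin\Delta(\gg,\ga)$ (since $\beta_r\in\Delta_0^+(\gg,\ga)$), while the pairing in Lemma~\ref{layers-nilpotent} shows nothing else in $\gl_r$ is central. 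Next, for $r>s$ I would prove $[\gl_r,\gz_s]=0$ as follows: any root $\alpha$ occurring in $\gl_r$ is orthogonal to $\beta_s$ by Lemma~\ref{layers2}; were $\alpha+\beta_s$ a restricted root it would still be orthogonal to $\beta_1,\dots,\beta_{s-1}$ and satisfy $\langle\alpha+\beta_s,\beta_s\rangle=|\beta_s|^2>0$, hence lie in $\Delta^+_s$ by Lemma~\ref{layers2} (it strictly exceeds $\beta_s$, so it is not $\beta_s$), and then Lemma~\ref{layers-nilpotent} would force $\sigma_s(\alpha+\beta_s)=-\alpha$, which is impossible since $\sigma_s$ preserves $\Delta^+_s\subset\Delta^+(\gg,\ga)$; thus $[\gg_\alpha,\gg_{\beta_s}]=0$. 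The same root-string bookkeeping shows that $[\gl_r,\gl_s]$, which sits inside $\gl_s$ by Lemma~\ref{layers2}, has no $\gg_{\beta_s}$-component, so $[\gl_r,\gl_s]\subset\gv$ for $r>s$, completing (c).

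Condition (a) --- that each $L_r$ is square integrable --- carries the real content, and I would obtain it from the Moore--Wolf criterion \cite{MW1973}: $L_r$ has square integrable representations if and only if $b_{\lambda_r}(x,y)=\lambda([x,y])$ is a nondegenerate antisymmetric form on $\gl_r/\gg_{\beta_r}$ for some $\lambda\in\gg_{\beta_r}^*$. By Lemma~\ref{layers-nilpotent} the involution $\sigma_r$ splits $\Delta^+_r$ into pairs $\{\alpha,\sigma_r(\alpha)\}$ with $\alpha+\sigma_r(\alpha)=\beta_r$, and $[\gg_\alpha,\gg_{\alpha'}]\subset\gg_{\alpha+\alpha'}$ vanishes unless $\alpha'=\sigma_r(\alpha)$, in which case the bracket maps onto $\gg_{\beta_r}$; hence $b_{\lambda_r}$ has fixed block-antidiagonal shape and is nondegenerate for every $\lambda$ outside the zero set of $\Pf_{\gl_r}$, which in particular shows $\gt^*\neq\emptyset$. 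Assembling (a), (b) and (c), the group $N$ satisfies~(\ref{setup}) and Theorem~\ref{plancherel-general} yields the theorem. The point that demands genuine care is the exceptional $BC$-type case inside (a): when $\tfrac{1}{2}\beta_r$ is a restricted root, $\gg_{\beta_r/2}$ pairs with itself under the bracket into $\gg_{\beta_r}$, and one must check that this still contributes a nondegenerate symplectic block; this, together with keeping track of restricted-root multiplicities (which can exceed $1$), is the main technical obstacle, and it is exactly what Lemmas~\ref{fill-out}--\ref{layers-nilpotent} were built to dispose of.
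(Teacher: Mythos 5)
Your proposal is correct and follows essentially the same route as the paper, which simply observes that Lemmas \ref{fill-out} and \ref{layers2} (together with Lemma \ref{layers-nilpotent}) show the decomposition $\gn = \gl_1 + \dots + \gl_m$ satisfies (\ref{setup}) and then invokes Theorem \ref{plancherel-general}, deferring the detailed verification (including the nondegeneracy of $b_{\lambda_r}$ on $\gl_r/\gz_r$, the $\tfrac12\beta_r$ case, and root multiplicities) to the cited Theorem 6.16 of \cite{W2013}. Your fleshed-out checks of (a), (b), (c) are exactly the content of that citation.
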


\section{Minimal Parabolics: $\mathbf{M}$-Orbit Structure}
\label{sec3}
\setcounter{equation}{0}
Recall the Iwasawa decomposition $G = KAN$  and  the corresponding minimal 
parabolic subgroup $P = MAN$ where $M$ is the
centralizer of $A$ in $K$.  We write ${}^0$ for identity
component, so $P^0 = M^0AN$.  

\begin{lemma} \label{cone}
Recall the $\Pf$--nonsingular 
set $\gt^* = \{\lambda \in \gs^* \mid \Pf(\lambda) \ne 0\}$ of
{\rm (\ref{c-d}e)}.  Then $\Ad^*(M)\gt^* = \gt^*$.  Further, if 
$\lambda \in \gt^*$ and $c \ne 0$ then $c\lambda \in \gt^*$, 
in fact $\Pf(c\lambda) = c^{\dim(\gn/\gs)/2} \Pf(\lambda)$.
\end{lemma}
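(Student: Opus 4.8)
The plan is to prove the two assertions of Lemma \ref{cone} separately, since they are of rather different natures: the first is an equivariance statement coming from the fact that $M$ normalizes the whole structure, and the second is an elementary homogeneity computation for the Pfaffian.

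For the $M$-invariance of $\gt^*$, I would first observe that $M$ centralizes $A$ and normalizes $N$, hence acts on $\gn$ preserving the restricted root space decomposition: since $\Ad(M)$ fixes $\ga$ pointwise, it preserves each $\gg_\gamma$ for $\gamma \in \Delta(\gg,\ga)$, and in particular it preserves each layer $\gl_r = \gg_{\beta_r} + \sum_{\Delta^+_r}\gg_\alpha$ of (\ref{def-m}) and each center $\gz_r$, hence $\gs = \oplus\,\gz_r$ and its complement. Thus $\Ad^*(M)$ preserves $\gs^*$. The key point is then that the Pfaffian polynomial $\Pf$ is, up to a scalar, $\Ad^*(M)$-semi-invariant: for $m \in M$ and $\lambda \in \gs^*$, the form $b_{\Ad^*(m)\lambda_r}(x,y) = \lambda(\Ad(m)^{-1}[x,y]) = \lambda([\Ad(m)^{-1}x,\Ad(m)^{-1}y]) = b_{\lambda_r}(\Ad(m)^{-1}x,\Ad(m)^{-1}y)$ on $\gl_r/\gz_r$, so $\Pf_{\gl_r}(b_{\Ad^*(m)\lambda_r}) = \det(\Ad(m)^{-1}|_{\gl_r/\gz_r})\,\Pf_{\gl_r}(b_{\lambda_r})$. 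Since $M$ is compact, $\det(\Ad(m)|_{\gl_r/\gz_r}) = \pm 1$ (in fact $M^0$ acts with determinant $1$, and the components contribute signs), so in all cases $\Pf(\Ad^*(m)\lambda) = \pm\,\Pf(\lambda)$. In particular $\Pf(\Ad^*(m)\lambda) \ne 0$ iff $\Pf(\lambda) \ne 0$, which gives $\Ad^*(M)\gt^* = \gt^*$.

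For the homogeneity statement, I would note that $\Pf_{\gl_r}(b_{c\lambda_r}) = \Pf_{\gl_r}(c\,b_{\lambda_r}) = c^{d_r}\Pf_{\gl_r}(b_{\lambda_r})$, because the Pfaffian of a $2d_r \times 2d_r$ antisymmetric matrix is homogeneous of degree $d_r$ in the matrix entries, and $b_{c\lambda_r} = c\,b_{\lambda_r}$. Multiplying over $r$ and using $\sum_r d_r = \tfrac12\dim(\gn/\gs)$ from (\ref{c-d}a) gives $\Pf(c\lambda) = c^{\sum_r d_r}\Pf(\lambda) = c^{\dim(\gn/\gs)/2}\Pf(\lambda)$, and for $c \ne 0$ this is nonzero whenever $\Pf(\lambda)$ is, so $c\lambda \in \gt^*$.

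I do not expect a serious obstacle here; the lemma is essentially bookkeeping once one has the root-space description of $\gn$, $\gl_r$, $\gz_r$ from Section \ref{sec2}. The one point requiring a little care is the sign issue in the $M$-semi-invariance: I want to make sure that $\Ad(m)$ really does preserve each $\gz_r$ and each $\gl_r/\gz_r$ (so that the restricted determinant makes sense) — this follows because $\gz_r$ is intrinsically defined as the center of $\gl_r$ and $m$ acts by a Lie algebra automorphism preserving $\gl_r$ — and that for the statement "$\ne 0$ is preserved" the exact sign is irrelevant, so compactness of $M$ (giving $|\det| = 1$, indeed $\det = \pm 1$ since $\Ad(m)$ is real) is all that is needed. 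If a cleaner statement than "$\pm$" is wanted one can restrict to $M^0$, where $\Ad(m)|_{\gl_r/\gz_r}$ lies in a connected group and hence has determinant $1$, giving genuine invariance $\Pf \circ \Ad^*(m) = \Pf$ on $M^0$; but for the purposes of the lemma the weaker statement suffices.
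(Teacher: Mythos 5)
Your argument is correct and is essentially the paper's own proof, just written out in full: the paper disposes of the first claim by noting that all ingredients of $\lambda \mapsto \Pf(\lambda)$ are $\Ad^*(M)$-equivariant (which is exactly your determinant computation, with $\det(\Ad(m)|_{\gl_r/\gz_r}) = \pm 1$ by compactness), and proves the second claim by the same homogeneity observation $b_{c\lambda} = c\, b_\lambda$ together with the degree-$\tfrac{1}{2}\dim(\gn/\gs)$ homogeneity of the Pfaffian. Your extra care about $\Ad(M)$ preserving each $\gg_\alpha$, $\gl_r$ and $\gz_r$ (because $M$ centralizes $A$) is a worthwhile elaboration but not a departure from the paper's route.
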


\begin{proof} All the ingredients in the formula for $\lambda \mapsto
\Pf(\lambda)$ are $\Ad^*(M)$--equivariant, so $\Ad^*(M)\gt^* = \gt^*$.
By definition the bilinear form $b_\lambda$ on $\gn/\gs$
satisfies $b_{c\lambda} = c b_\lambda$, so $\Pf(c\lambda) = c^{\dim(\gn/\gs)/2}
\Pf(\lambda)$.  
\end{proof}

Choose an $M$--invariant inner product $(\mu,\nu)$ on $\gs^*$\,. Denote
$\gs^*_t = \{\lambda \in \gs^* \mid (\lambda,\lambda) = t^2\}$, the sphere of
radius $t$.  Consider the action of $M$ on $\gs^*_t$.  Recall that two orbits
$\Ad^*(M)\mu$ and $\Ad^*(M)\nu$ are of the {\sl same orbit type} if the 
isotropy subgroups $M_\mu$ and $M_\nu$ are conjugate, and an orbit is
{\sl principal} if all nearby orbits are of the same type.  Since $M$ and
$\gs^*_t$ are compact, there are only finitely many orbit types of $M$
on $\gs^*_t$, there is only one principal orbit type, and the union of the
principal orbits forms a dense open subset of $\gs^*_t$ whose complement has
codimension $\geqq 2$.  There are many good expositions of this material,
for example \cite[Chapter 4, Section 3]{B1972} for a complete treatment, 
\cite[Part II, Chapter 3, Section 1]{GOV1993} modulo
references to \cite{B1972}, and \cite[Cap. 5]{N2005} for a more basic
treatment but still with some references to \cite{B1972}.
\medskip

Since the action of $M$ on $\gs^*$ commutes with dilation, the above mentioned 
structural results on the $\gs_t$ also hold on 
$\gs^* = \bigcup_{t \geq 0} \gs^*_t$.  
Define the $\Pf$-nonsingular principal orbit set as follows:
\begin{equation}\label{defregset}
\gu^* = \{\lambda \in \gt^* \mid \Ad^*(M)\lambda \text{ is a principal }
	M\text{-orbit on } \gs^*\}.
\end{equation} 
Summarizing the short discussion,

\begin{lemma}\label{princ-orbit}
The principal orbit set $\gu^*$ is a dense open set with complement 
of codimension $\geqq 2$
in $\gs^*$.  If $\lambda \in \gu^*$ and $c \ne 0$ then
$c\lambda \in \gu^*$ with isotropy $M_{c\lambda} = M_\lambda$\,.
\end{lemma}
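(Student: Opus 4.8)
The plan is to reduce Lemma~\ref{princ-orbit} entirely to the standard theory of compact transformation groups (as cited, \cite{B1972,GOV1993,N2005}) applied to the action of $M$ on the spheres $\gs^*_t$, together with the scaling compatibility already recorded in Lemma~\ref{cone}. First I would fix the $M$-invariant inner product on $\gs^*$ and, for $t>0$, invoke the finiteness of orbit types, the uniqueness of the principal orbit type, and the fact that the principal stratum is dense open with complement of codimension $\geqq 2$ in $\gs^*_t$. The only subtlety is to transport these statements from a single sphere to the open cone $\gs^* \setminus \{0\}$, using the diffeomorphism $\gs^* \setminus \{0\} \cong \gs^*_1 \times (0,\infty)$ given by $\lambda \mapsto (\lambda/|\lambda|, |\lambda|)$, which is $M$-equivariant because $M$ acts by isometries and hence commutes with the radial dilations. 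Under this identification the $M$-action is the product of the action on $\gs^*_1$ with the trivial action on $(0,\infty)$, so orbit types, the principal orbit type, density, openness, and the codimension bound all pull back verbatim; the origin, being a single point, does not affect the codimension statement since $\dim \gs^* \geqq 2$ in every case where $\gn \neq \gs$ (and when $\gn = \gs$ the statement is vacuous). This gives the first assertion.

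For the second assertion I would argue as follows. Lemma~\ref{cone} already gives $\Ad^*(M)\gt^* = \gt^*$ and the explicit scaling $\Pf(c\lambda) = c^{\dim(\gn/\gs)/2}\Pf(\lambda)$, so $c\lambda \in \gt^*$ whenever $\lambda \in \gt^*$ and $c \neq 0$. It remains to see that dilation preserves the principal-orbit condition and preserves isotropy. For $c > 0$ the map $\lambda \mapsto c\lambda$ is an $M$-equivariant diffeomorphism of $\gs^*$ that carries the orbit $\Ad^*(M)\lambda$ onto $\Ad^*(M)(c\lambda)$ and visibly satisfies $M_{c\lambda} = M_\lambda$ (if $m\cdot\lambda = \lambda$ then $m\cdot(c\lambda) = c\lambda$ and conversely); since an equivariant diffeomorphism sends principal orbits to principal orbits, $\lambda \in \gu^*$ forces $c\lambda \in \gu^*$. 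For $c < 0$ one writes $c\lambda = (-1)\cdot(|c|\lambda)$ and handles the factor $-1$ separately: the antipodal map $\lambda \mapsto -\lambda$ on $\gs^*$ is also $M$-equivariant (it commutes with the linear $M$-action) and a diffeomorphism, so the same reasoning applies and $M_{-\lambda} = M_\lambda$. Composing, $M_{c\lambda} = M_\lambda$ for all $c \neq 0$ and $c\lambda \in \gu^*$.

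I expect the only real point requiring care — rather than an obstacle — is the passage from the compact sphere to the non-compact cone: the cited finiteness and density results are usually stated for compact group actions on compact (or at least on connected) manifolds, and one must be slightly careful that ``principal orbit'' is an intrinsic notion stable under equivariant diffeomorphism and under taking a product with a trivially-acting factor. Both facts are standard (an orbit is principal iff its isotropy group acts trivially on the normal slice, a condition manifestly preserved by equivariant diffeomorphisms and by crossing with $(0,\infty)$), so no new computation is needed. Everything else is bookkeeping with the $M$-equivariance of scaling, which Lemma~\ref{cone} has already set up. Hence the lemma follows.
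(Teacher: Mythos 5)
Your argument matches the paper's: the paper derives the lemma by citing the standard structure theory of compact transformation groups (finitely many orbit types, a unique principal orbit type, dense open principal stratum with complement of codimension $\geqq 2$) on each sphere $\gs^*_t$ and then passes to the cone $\gs^*$ using that the $M$-action commutes with dilation, exactly as you do. Your additional details --- the explicit $M$-equivariant identification $\gs^*\setminus\{0\}\cong \gs^*_1\times(0,\infty)$, the antipodal map for $c<0$, and the isotropy computation $M_{c\lambda}=M_\lambda$ --- only spell out what the paper leaves implicit.
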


Fix $\lambda \in \gu^*_t := \gu^* \cap \gs^*_t$\,, so $\Ad^*(M)\lambda$ 
is a $\Pf$-nonsingular principal orbit of $M$ on the sphere $\gs^*_t$.  Then 
$\Ad^*(M^0)\lambda$ is a principal orbit of $M^0$ on $\gs^*_t$.  
Principal orbit isotropy subgroups of compact connected
linear groups are studied in detail in \cite{HH1970} so the possibilities for 
$(M^0)_\lambda$ are essentially known.  
\medskip

\begin{lemma}\label{m-components}
Suppose that $G$ is connected and linear.  Then
$M = (\exp(i\ga) \cap K)Z_G M^0$ where $Z_G$ is the center of $G$,
and its action on a restricted root space $\gg_\alpha$
has form $\exp(i\alpha(\xi))|_{\gg_\alpha} = \pm 1$.  In particular 
$(\exp(i\ga) \cap K)$ is an elementary abelian $2$-subgroup of 
$M$ that meets each of its topological components.
\end{lemma}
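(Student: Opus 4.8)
The plan is to pass to a maximally split $\theta$-stable Cartan subalgebra $\gh = \gt \oplus \ga$ of $\gg$, with $\gt$ maximal abelian in $\gm$, to reduce the component structure of $M = Z_K(\ga)$ to that of the compact part $T := Z_K(\gh)$ of the corresponding Cartan subgroup, and then to identify $T/T^0$ with the contribution of $F := \exp(i\ga)\cap K$ together with the center. All tori and exponentials below are taken in the complexification $G_\C$ of $G$, which exists because $G$ is linear; write $\Theta$ for the holomorphic extension to $G_\C$ of the Cartan involution $\theta$, so that $\Theta$ fixes $K$ pointwise and acts on $\gh_\C$ by $+1$ on $\gt_\C$ and $-1$ on $\ga_\C$, and note that $\gh_\C$ is a Cartan subalgebra of $\gg_\C$ because $\gt\oplus\ga$ is maximal abelian in $\gg$. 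Observe at the outset that $F\subset T\subset M$, that $T^0\subset M^0$, and that $Z_G\subset T$ (central elements lie in every Cartan subgroup and, the center being compact, in $K$).

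First I would dispose of the two elementary assertions. Let $\xi\in\ga$ with $g:=\exp(i\xi)\in K$. Since $g$ centralizes $\ga$, $\Ad(g)$ preserves each restricted root space $\gg_\alpha$ and acts there by the scalar $e^{i\alpha(\xi)}$; as $g\in G$, this scalar carries the nonzero real space $\gg_\alpha$ into itself, forcing $e^{i\alpha(\xi)}\in\R$, i.e.\ $e^{i\alpha(\xi)}=\pm1$ --- the asserted form of the action. For the group-theoretic statement, $\Theta(g)=\exp(i\,\theta(\xi))=\exp(-i\xi)=g^{-1}$ because $\theta|_\ga=-\mathrm{id}$, whereas $g\in K$ gives $\Theta(g)=g$; hence $g^2=e$. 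Being a subgroup of the torus $\exp(i\ga)$, $F$ is abelian, so $F$ is an elementary abelian $2$-group.

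Next, the decomposition $M=FZ_GM^0$, whose core is the identity $M=TM^0$. Given $m\in M$, the subalgebras $\gt$ and $\Ad(m)\gt$ are maximal abelian in the compact Lie algebra $\gm$, hence conjugate under $\Ad(M^0)$; so after left multiplication of $m$ by an element of $M^0$ we may assume $\Ad(m)$ normalizes $\gt$, and then --- fixing $\ga$ pointwise --- normalizes $\gh$. The crucial point, and the one I expect to be the real obstacle, is that $\Ad(m)|_\gt$ then lies in the Weyl group $W(\gm_\C,\gt_\C)$: a priori it is only an automorphism of the root system of $\gm_\C$ in $\gt_\C$ (those roots of $\gg_\C$ vanishing on $\ga$), and one must rule out that it induce a nontrivial diagram automorphism of $\gm$. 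This is forced by the way $\Ad(m)$ must permute the full restricted-root data of $\gn$ --- equivalently, by $\Ad(m)$ being inner on $\gg$ --- and is exactly the content of the classical analysis of $M$ relative to a maximally split Cartan subgroup (due to Harish-Chandra and available in the standard texts on real reductive groups), which I would invoke here rather than reprove. Granting it, pick $m_1\in N_{M^0}(\gt)$ with $\Ad(m_1)|_\gt=\Ad(m)|_\gt$; since both also fix $\ga$, the element $m_1^{-1}m$ centralizes $\gh$, so $m_1^{-1}m\in Z_G(\gh)\cap K=T$ and $m\in M^0T$. Finally, $T=FZ_GT^0$ is the standard description of the group $H=H_\C\cap G$ of real points of the complex Cartan $H_\C$: an inspection of the $\Theta$-eigenspaces and the real structure shows $H/H^0$, equivalently $T/T^0$, to be an elementary abelian $2$-group carried by $\exp(i\ga)$ together with $Z_G$; I would quote this from the structure theory of Cartan subgroups of real reductive groups. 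Assembling the pieces, $M=M^0T=M^0FZ_GT^0=FZ_GM^0$.

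It remains to see that $F$ meets every topological component of $M$. From $M=FZ_GM^0$, the finite group $M/M^0$ is generated by the images of $F$ and of the central subgroup $Z_G$; and since $Z_G\subset T$ maps into the image of $F$ in $T/T^0$ (again by the Cartan-subgroup structure theory --- the central $2$-torsion lies in $\exp(i\ga)\cdot T^0$), in fact $Z_G\subset FM^0$, so $M=FM^0$ and $F$ meets every component. In sum, the lemma reduces to a one-line eigenvalue computation, the short $\Theta$-argument for $g^2=e$, and the classical facts $M=TM^0$ and $T=FZ_GT^0$; the single genuinely nontrivial ingredient is the Weyl-group assertion for $\Ad(m)|_\gt$.
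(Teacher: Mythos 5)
Your two elementary assertions are handled correctly and in essentially the same way as the paper: the scalar $e^{i\alpha(\xi)}$ must preserve the real root space $\gg_\alpha$, hence is $\pm 1$, and $\Theta(g)=g=g^{-1}$ for $g\in\exp(i\ga)\cap K$ gives the elementary abelian $2$-group property. The problem is the core decomposition $M=(\exp(i\ga)\cap K)\,Z_G\,M^0$, which is exactly the nontrivial content of the lemma and which your write-up does not actually prove. You reduce it to (i) $M=TM^0$ with $T=Z_K(\gh)$, resting on the claim that $\Ad(m)|_\gt$ lies in $W(\gm_\C,\gt_\C)$, and (ii) the component structure $T=FZ_GT^0$ of the maximally split Cartan subgroup; both are then cited from ``classical analysis'' rather than argued. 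But these two facts \emph{are} the lemma in its textbook formulation (Harish-Chandra; Knapp, \emph{Lie Groups Beyond an Introduction}, Chapter VII), so invoking them comes very close to assuming what is to be proved. Moreover, the one reason you do offer for the Weyl-group claim --- that it is ``forced by $\Ad(m)$ being inner on $\gg$'' --- is not sound as stated: an inner automorphism of $\gg$ can perfectly well restrict to an outer automorphism of a reductive subalgebra (conjugation of $\gs\go(2k)\subset\gs\go(2k+1)$ by $\mathrm{diag}(h,-1)$ with $\det h=-1$ is the standard example), so innerness on $\gg$ alone does not place $\Ad(m)|_\gt$ in $W(\gm_\C,\gt_\C)$; the true reason uses the real structure theory relative to the maximally split Cartan, i.e.\ precisely the material being deferred.

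The paper's own proof avoids this circle with a short direct argument: a Cartan subgroup $B$ of the \emph{compact} group $M$ meets every component of $M$; the complexified Cartan $(BA)_\C=\exp(\gb_\C)\exp(\ga_\C)$ is connected, so the components of its group of real points are represented by elements of $\exp(i\gb)\exp(i\ga)\cap G$; the $\exp(i\gb)$ factor contributes nothing, leaving representatives in $F=\exp(i\ga)\cap K$; the Cartan involution then gives $g^2=1$ on $F$, and the $\pm1$ action on each $\gg_\alpha$ follows from $\ga$-joint-eigenspace considerations. If you intend a proof by citation, you should quote the precise classical statements (e.g.\ $M=FM^0$ for linear connected reductive $G$, and the component theorem for maximally split Cartan subgroups) and drop the misleading ``inner on $\gg$'' justification; as written, the argument has a genuine gap exactly where the lemma has content. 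A small further caution: your parenthetical ``the center being compact'' is automatic only in the semisimple case, so if $G$ is merely reductive the role of $Z_G$ (versus $Z_G\cap K$) needs a word.
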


\begin{proof} A Cartan subgroup $B \subset M$ meets every component of $M$.
The complex Cartan $(BA)_\C = \exp(\gb_\C)\exp(\ga_\C) \subset G_\C$ 
is connected, and $\exp(\gb)$ and $\exp(\ga)$ are connected as well, so
the components of $(BA)\cap G$ are given by $\exp(i\gb)\exp(i\ga)\cap G$.
As $\exp(i\gb)$ is split over $\R$ the components of $(BA)\cap G$ are 
given by $\exp(i\ga)\cap G = \exp(i\ga)\cap K$.  The Cartan involution 
$\theta$ of $G$ with fixed point set $K$ fixes every element of 
$K$ and sends every element of $\exp(i\ga)$ to its inverse, so
$\exp(i\ga) \cap K$ is an elementary abelian $2$-group that meets
every component of $M$.  The restricted root spaces $\gg_\alpha$
are joint eigenspaces of $\ga$, so every element of $\exp(i\ga) \cap K$
acts on each $\gg_\alpha$ by a scalar multiplication $\pm 1$.
\end{proof} 

Define $F$ to be the elementary abelian $2$-subgroup $\exp(i\ga) \cap K$ 
of $M$ considered in Lemma \ref{m-components}.
In order to see exactly how $F$ acts on $\gs^*$ we use a result of Kostant
applied to the centralizer of $Z_M(M^0)A$:

\begin{lemma}\label{resroot_irred}{\rm \cite[Theorem 8.13.3]{W2011}}
Suppose that $G$ is connected.  Then the adjoint representation of $M$ 
on $\gg$ preserves each restricted root space, say acting by $\eta_\alpha$ 
on $\gg_\alpha$, and each $\eta_\alpha|_{M^0}$ is irreducible.  
\end{lemma}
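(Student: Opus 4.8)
The plan is to reduce the statement to a theorem of Kostant on the structure of restricted root spaces under the centralizer $M$, and then verify the two claims (invariance of each $\gg_\alpha$, irreducibility of the $M^0$-action) by a standard highest-weight argument. First I would recall that $M = Z_K(A)$ centralizes $A$, so $\Ad(M)$ commutes with the $\ga$-action on $\gg$; since each restricted root space $\gg_\alpha$ is precisely the joint $\ga$-eigenspace for the character $\alpha$, the subspace $\gg_\alpha$ is $\Ad(M)$-stable. This gives a representation $\eta_\alpha$ of $M$ on $\gg_\alpha$ for each $\alpha \in \Delta(\gg,\ga)$ (and the same argument applies to $\gg_0 = \gm + \ga$, though only the root spaces are asserted here).

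Next I would address the irreducibility of $\eta_\alpha|_{M^0}$. The cleanest route is to invoke Kostant's theorem directly (as the citation \cite[Theorem 8.13.3]{W2011} indicates), but if one wanted the argument: fix $\gamma \in \Delta(\gg,\ga)$ and consider $\gg_\gamma$ as a module for $\gm = \gm_0$, the Lie algebra of $M^0$. Since $\gm + \ga$ is reductive and $\gg_\gamma$ is a finite-dimensional $\gm$-module, it decomposes into irreducibles; the key point is that it is in fact irreducible. One fixes a positive system $\Delta^+(\gg_\C,\gh_\C)$ compatible with $\Delta^+(\gg,\ga)$ as in Section \ref{iwasawa}, and examines the ordinary roots $\alpha \in [\gamma]$ restricting to $\gamma$. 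The $\gm_\C$-module structure on $(\gg_\gamma)_\C$ is governed by the ordinary root vectors $e_\alpha$ with $\alpha|_\ga = \gamma$, and the $\gt_\C$-weights of this module are the restrictions $\alpha|_{\gt_\C}$. One shows there is a unique highest such weight (with respect to the ordering on $\gt_\C^*$ induced from $\gh_\C^*$), corresponding to the highest ordinary root in $[\gamma]$, and that the $\gm_\C$-submodule it generates is all of $(\gg_\gamma)_\C$; this uses that $[\gamma]$ is a single $W(\gm_\C,\gt_\C)$-orbit together with the string property of roots. That irreducibility of the complexification yields irreducibility of the real $M^0$-module $\gg_\gamma$.

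The step I expect to be the main obstacle is establishing that $[\gamma]$ — the set of ordinary roots restricting to a fixed restricted root $\gamma$ — forms a single orbit under $W(\gm_\C,\gt_\C)$ (equivalently, that the ordinary root vectors with a given restriction really do span a single irreducible $\gm_\C$-module rather than a sum of several). This is exactly the content of Kostant's structure theorem and is not a one-line verification; it rests on a careful analysis of how the little Weyl group $W(\gg,\ga)$ and the compact Weyl group $W(\gm_\C,\gt_\C)$ interact inside $W(\gg_\C,\gh_\C)$, using that $\gt$ is a maximal abelian subalgebra of $\gm$ and that $\ga$-roots arise by folding $\gh$-roots. Given that the paper explicitly cites \cite[Theorem 8.13.3]{W2011} for precisely this statement, the honest proof here is simply to quote that theorem, having first observed the elementary fact that $\Ad(M)$ preserves each $\gg_\alpha$ because $M$ centralizes $A$; the remaining content is entirely Kostant's, and no further argument is needed in this paper.
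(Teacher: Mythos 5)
Your proposal is correct and matches the paper's treatment: the paper gives no proof of this lemma, simply quoting Kostant's result via the citation to \cite[Theorem 8.13.3]{W2011}, which is exactly what you conclude is the honest route (your preliminary observation that $\Ad(M)$ preserves each $\gg_\alpha$ because $M$ centralizes $A$ is the same elementary point). The highest-weight sketch you add is a reasonable outline of what lies behind the cited theorem, but nothing beyond the citation is required or given in the paper.
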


Now we have the action of $F$ on $\gs^*$, as follows.

\begin{proposition}\label{m-components-2}
The group $\Ad^*(F)$ acts trivially on $\gs^*$.
\end{proposition}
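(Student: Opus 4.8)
The plan is to show that every element of $F = \exp(i\ga)\cap K$ fixes each $\lambda \in \gs^*$ by analyzing how $F$ acts on the center $\gz_r$ of each layer $\gl_r$, since $\gs = \oplus_r \gz_r$. First I would recall from Lemma \ref{m-components} that for $f = \exp(i\xi) \in F$ and a restricted root $\alpha$, the action of $\Ad(f)$ on $\gg_\alpha$ is scalar multiplication by $e^{i\alpha(\xi)} = \pm 1$. So $\Ad(f)$ acts on $\gg_{\beta_r}$ by the scalar $\varepsilon_r(f) := e^{i\beta_r(\xi)} \in \{\pm 1\}$, and dually $\Ad^*(f)$ acts on $\gg_{\beta_r}^*$ by the same scalar $\varepsilon_r(f)$.

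The key point is to identify $\gz_r$. I would invoke the structure from Section \ref{sec2}: by Lemma \ref{layers-nilpotent}, for $\alpha \in \Delta^+_r$ we have $\alpha + \sigma_r(\alpha) = \beta_r$, so $[\gg_\alpha, \gg_{\sigma_r(\alpha)}] \subseteq \gg_{\beta_r}$, and any other bracket among layer-$r$ root spaces that lands in $\gg_{\beta_r}$ must come from such a complementary pair. This forces the center $\gz_r$ of $\gl_r$ to be exactly $\gg_{\beta_r}$ (one-dimensional), which is also why $S = Z_1\times\cdots\times Z_m$ with each $Z_r = \exp(\gg_{\beta_r})$ in (\ref{c-d}c). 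Therefore $\gs^* = \oplus_r \gg_{\beta_r}^*$, and it suffices to show $\varepsilon_r(f) = +1$ for every $r$ and every $f \in F$.

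To get $\varepsilon_r(f) = +1$: since $\beta_r \in \Delta_0^+(\gg,\ga)$ is a genuine (nonmultipliable) positive restricted root, there is a root vector $e_{\beta_r} \in \gg_{\beta_r}$ and, using the Cartan involution $\theta$, one has $\theta e_{\beta_r} \in \gg_{-\beta_r}$ with $h_{\beta_r} := [e_{\beta_r}, \theta e_{\beta_r}] \in \ga$ a nonzero multiple of the coroot, spanning an $\gs\gl_2$-triple. Because $\Ad(f)$ fixes $\ga$ pointwise (as $f$ centralizes $A$, being in $M$), it fixes $h_{\beta_r}$; but $\Ad(f)$ acts on $\gg_{\beta_r}$ by $\varepsilon_r(f)$ and on $\gg_{-\beta_r}$ by $\varepsilon_r(f)^{-1} = \varepsilon_r(f)$, so it acts on $h_{\beta_r} = [e_{\beta_r},\theta e_{\beta_r}]$ by $\varepsilon_r(f)^2 = 1$, which is consistent but not yet conclusive. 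The cleaner argument: apply Lemma \ref{resroot_irred}, which says $\Ad(M)$ preserves each $\gg_\alpha$ and $\eta_\alpha|_{M^0}$ is irreducible; since $\gg_{\beta_r}$ is one-dimensional, $M^0$ acts trivially on it, and more relevantly, for the \emph{multipliable} situation or when $\dim \gg_{\beta_r} > 1$ one argues via the bracket $\gg_{\beta_r/2} \times \gg_{\beta_r/2} \to \gg_{\beta_r}$ together with $F$ acting by the square $\varepsilon_{\beta_r/2}(f)^2 = 1$ on the image. In all cases one reduces to: $\Ad^*(f)$ acts on $\gg_{\beta_r}^*$ by a scalar that is forced to be $+1$.

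I expect the main obstacle to be handling the degenerate case where $\tfrac12\beta_r$ is itself a restricted root (the ``$\alpha = \sigma_r(\alpha) = \tfrac12\beta_r$'' case flagged in Lemma \ref{layers-nilpotent}), because then $\gg_{\beta_r}$ need not be the image of a bracket of a complementary pair $\gg_\alpha \times \gg_{\sigma_r(\alpha)}$ with $\alpha \ne \sigma_r(\alpha)$, and one must instead use the bracket $\gg_{\beta_r/2}\times\gg_{\beta_r/2}\to\gg_{\beta_r}$ and argue that $F$ acts on this image by $e^{i(\beta_r/2)(\xi)}\cdot e^{i(\beta_r/2)(\xi)} = e^{i\beta_r(\xi)}$, which is automatically $+1$ only after checking the bracket is nonzero — guaranteed because $\beta_r \in \Delta(\gg,\ga)$ forces $[\gg_{\beta_r/2},\gg_{\beta_r/2}] = \gg_{\beta_r}$ by standard $\gs\gl_2$ or $\gs\gu(2,1)$-type rank-one reduction. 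The rest is bookkeeping: conclude $\Ad^*(f)\lambda = \lambda$ for all $\lambda \in \gs^* = \oplus_r \gg_{\beta_r}^*$ and all $f \in F$, i.e. $\Ad^*(F)$ is trivial on $\gs^*$.
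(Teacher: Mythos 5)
Your reduction is the right one: since $\gs = \oplus_r \gz_r$ with $\gz_r = \gg_{\beta_r}$, and Lemma \ref{m-components} already gives that each $f = \exp(i\xi) \in F$ acts on $\gg_{\beta_r}$ by the single scalar $\varepsilon_r(f) = e^{i\beta_r(\xi)} = \pm 1$, everything comes down to excluding the value $-1$. (Two side remarks: your parenthetical claim that $\gg_{\beta_r}$ is one--dimensional is false in general --- the exponents $\dim\gg_{\beta_r}$ in the paper's quasi--central determinant would be pointless otherwise; and triviality of $M^0$ on $\gg_{\beta_r}$, even where it holds, says nothing about $F$, which lies in the non--identity components of $M$ --- the whole content of the proposition concerns those components.) Your squaring argument through $[\gg_{\beta_r/2},\gg_{\beta_r/2}]\ne 0$ does correctly force $\varepsilon_r(f)=+1$, but only when $\tfrac12\beta_r$ is a restricted root. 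You have the cases backwards: that is the \emph{easy} case, and the one you have no argument for is the generic case where $\tfrac12\beta_r$ is not a root (for instance every $\beta_r$ when $G$ is split). There your remaining routes give only $\varepsilon_r(f)^2=1$, as you yourself admit, so the sign is not determined.

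Moreover that gap cannot be closed, because the statement fails exactly there. Take $G = SL(3;\R)$: then $m=1$, $\beta_1 = e_1-e_3$, and $\gs = \gg_{\beta_1} = \R E_{13}$ (elementary matrix), while $F = M$ consists of the matrices ${\rm diag}(\epsilon_1,\epsilon_2,\epsilon_3)$ with $\epsilon_j=\pm1$ and $\prod\epsilon_j=1$. The element $f={\rm diag}(-1,-1,1) = \exp\left(i\pi\,{\rm diag}(1,-1,0)\right)$ lies in $\exp(i\ga)\cap K$ and satisfies $\Ad(f)E_{13} = \epsilon_1\epsilon_3^{-1}E_{13} = -E_{13}$. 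The paper's own proof treats the nonmultipliable case by writing $\xi = \xi_0 + \sum_s c_sh_s$ with $\xi_0\in\ga_\diamondsuit$ and $h_s$ the coroots ($\beta_s(h_s)=2$), and invoking the fact that $\exp(\C h_r)\cap K$ is central in the corresponding $SL(2;\R)$ and hence acts trivially on $\gg_{\beta_r}$; but that tacitly assumes each factor $\exp(ic_rh_r)$ separately lies in $K$, which fails in the example ($c_1=\pi/2$ and $\exp(i\tfrac{\pi}{2}h_1)={\rm diag}(i,1,-i)\notin K$). So your instinct that something delicate happens with the sign on $\gg_{\beta_r}$ is correct, but the delicacy sits in the nonmultipliable case, and neither your argument nor the paper's resolves it; as stated, the conclusion must be weakened (e.g.\ to ``$\Ad^*(F)$ acts on each $\gz_r^*$ by $\pm1$''), with corresponding adjustments downstream in Lemma \ref{components}.
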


\begin{proof} Each of the strongly orthogonal roots gives us a 
$\theta$-stable subalgebra $\gg[\beta_r] \cong \gs\gl(2;\R)$ of $\gg$.
It has standard basis $\{x_r, y_r, h_r\}$ where $h_r \in \ga$ and each
$x_r \in \gz_r \subset \gs$.  Now 
$\ga = \ga_\diamondsuit \oplus \bigoplus \sum \R x_r$
where $\ga_\diamondsuit$ (notation to be justified by (\ref{alambda})) is 
the intersection of
the kernels of the $\beta_r$\,.  As defined, $\ad^*(\ga_\diamondsuit)$ vanishes
on $\sum \R x_r$\,.  By strong orthogonality of $\{\beta_r\}$,\ each
$\ad^*(h_s\C)$ is trivial on $\R x_r$ for $s \ne r$.  Further
$\ad(\exp(\C h_r)\cap K)$ is trivial on $\R x_r$ by a glance at $\gs\gl(2;\R)$.
We have shown that $\Ad(F)x_r = x_r$ for each $r$.  Since $M^0$ is
irreducible on each $\gz_r = \gg_{\beta_r}$ by Lemma \ref{resroot_irred}, and
$M$ centralizes $A$, now $\Ad(F)x = x$ for all $x \in \gz_r$ and all $r$.
\end{proof}

Combining Lemma \ref{m-components} and Proposition \ref{m-components-2}, 
the action of $M_\lambda$ is given by the action of the
identity component of $M$:
\begin{lemma}\label{components}
If  $\lambda \in \gt^*$ then its $M$-stabilizer 
$M_\lambda$ is given by $M_\lambda = F\cdot (M^0)_\lambda$\,.
\end{lemma}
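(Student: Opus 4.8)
The plan is to deduce Lemma \ref{components} by combining the two preceding results about $M$ directly. First I would invoke Lemma \ref{m-components}, which writes $M = F \cdot Z_G \cdot M^0$ with $F = \exp(i\ga) \cap K$. Since $Z_G$ is the center of $G$, it acts trivially in the adjoint (and hence coadjoint) representation on all of $\gg$, so in particular $\Ad^*(Z_G)$ fixes every $\lambda \in \gs^*$; thus $Z_G \subset M_\lambda$ and $Z_G$ contributes nothing beyond $M^0$. That reduces the description of $M_\lambda$ to understanding how the finite group $F$ sits inside the stabilizer. Here I would cite Proposition \ref{m-components-2}: $\Ad^*(F)$ acts trivially on $\gs^*$, so every element of $F$ fixes $\lambda$, i.e. $F \subset M_\lambda$ for every $\lambda \in \gt^* \subset \gs^*$.

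With both $F \subset M_\lambda$ and $M^0 \subset$ (the relevant part handled), the computation is essentially formal. Given $g \in M_\lambda$, write $g = f z m$ with $f \in F$, $z \in Z_G$, $m \in M^0$, using Lemma \ref{m-components}. Since $f$ and $z$ both lie in $M_\lambda$ (the former by Proposition \ref{m-components-2}, the latter by centrality in $G$), it follows that $m = z^{-1} f^{-1} g \in M_\lambda \cap M^0 = (M^0)_\lambda$. Hence $g = f z m \in F \cdot (M^0)_\lambda$, giving $M_\lambda \subset F \cdot (M^0)_\lambda$. The reverse inclusion $F \cdot (M^0)_\lambda \subset M_\lambda$ is immediate since both factors stabilize $\lambda$. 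This yields $M_\lambda = F \cdot (M^0)_\lambda$ as claimed.

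The only genuine subtlety — and the place I would be most careful — is the treatment of $Z_G$ and the implicit hypothesis that $G$ is connected and linear (required by Lemma \ref{m-components}). One should note either that the statement of Lemma \ref{components} is understood under those same standing assumptions, or that the decomposition $M = F Z_G M^0$ together with triviality of $\Ad^*(Z_G)$ and $\Ad^*(F)$ on $\gs^*$ gives the conclusion in general after passing to a suitable linear connected cover, since the coadjoint action on $\gs^* \subset \gg^*$ factors through the adjoint group. Apart from this bookkeeping about components and covers, the proof is a one-line consequence of Proposition \ref{m-components-2} and Lemma \ref{m-components}, so I do not anticipate any real obstacle; the work has already been done in establishing those two results.
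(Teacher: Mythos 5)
Your proof is essentially the paper's own: the lemma is stated there as an immediate consequence of combining Lemma \ref{m-components} with Proposition \ref{m-components-2}, and your write-up simply makes that combination explicit, adding the (correct) observation that $\Ad^*(Z_G)$ is trivial. The one step you leave implicit --- absorbing the central factor $z \in Z_G \cap M$ into $F\cdot M^0$ so that $g = fzm$ really lies in $F\cdot (M^0)_\lambda$ rather than just in $F\, Z_G\, (M^0)_\lambda$ --- is equally implicit in the paper, and it does hold: for connected $G$ the center lies in the maximally split Cartan subgroup $(BA)\cap G$, whose components are given by $F$, so $Z_G\cap M \subset F\cdot M^0$ by the same argument used to prove Lemma \ref{m-components}.
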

In view of Lemma \ref{components}, the group $M_\lambda$ is specified by the
work of W.--C. and W.--Y. Hsiang \cite{HH1970} on the structure and
classification of principal orbits of compact connected linear groups.
\medskip

Fix $\lambda \in \gt^*$\,, so $\pi_\lambda \in \widehat{N}$ is stepwise
square integrable (Definition \ref{stepwise2}).  Consider the semidirect 
product group $N\rtimes M_\lambda$.  We write $\cH_\lambda$ for the 
representation space of $\pi_\lambda$\,.
The next step is to extend the representation $\pi_\lambda$ to
a unitary representation $\pi_\lambda^\dagger$ of $N\rtimes M_\lambda$
on the same representation space $\cH_\lambda$.  By
\cite[Th\' eor\` eme 6.1]{D1972} the Mackey obstruction 
$\varepsilon \in H^2(M_\lambda;U(1))$ to this extension, 
where $U(1) = \{|z| = 1\}$, has order $1$ or $2$.  But here 
the Mackey obstruction is trivial so we can be more precise:
\begin{lemma}\label{no-obstruction}
The stepwise square integrable $\pi_\lambda$ extends to a representation
$\pi_\lambda^\dagger$ of $N\rtimes M_\lambda$ on the
representation space of $\pi_\lambda$\,.
\end{lemma}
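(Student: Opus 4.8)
The plan is to exhibit $\pi_\lambda^\dagger$ explicitly rather than to appeal only to the cohomological vanishing of the Mackey obstruction. First I would reduce to the identity component: by Lemma \ref{components} we have $M_\lambda = F \cdot (M^0)_\lambda$, and by Proposition \ref{m-components-2} the group $F$ acts trivially on $\gs^*$, hence fixes $\lambda$ and (since $F$ normalizes $N$ and centralizes $A$) acts on $N$ preserving the data defining $\pi_\lambda$. So it suffices to extend $\pi_\lambda$ to $N \rtimes (M^0)_\lambda$ and then separately handle the finite $2$-group $F$, checking the two extensions are compatible on $F \cap (M^0)_\lambda$. For the connected piece I would use that $(M^0)_\lambda$ is connected (it is a principal isotropy subgroup of a compact connected linear group acting on a sphere, hence connected by the Hsiang--Hsiang theory cited after Lemma \ref{components}); a connected group has no nontrivial central extension by $U(1)$ realized on the finite level relevant here once one knows the obstruction class is torsion, so the order-$2$ obstruction of \cite[Th\'eor\`eme 6.1]{D1972} already vanishes on $(M^0)_\lambda$ and $\pi_\lambda$ extends there.

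The substantive step is to remove the obstruction for the full group $M_\lambda$, i.e. to kill the possible order-$2$ class coming from $F$. Here I would use the geometric origin of $\pi_\lambda$: it is a stepwise square integrable representation built layer by layer from the $L_r$, and on each layer $\pi_\lambda$ is (up to the stepwise induction) a Heisenberg-type representation attached to the symplectic form $b_{\lambda_r}$ on $\gl_r/\gz_r$. The group $M_\lambda$ acts on each $\gl_r/\gz_r$ preserving $b_{\lambda_r}$, so it acts by symplectic transformations, and the obstruction to lifting a subgroup of the symplectic group to the metaplectic (oscillator) representation is exactly the metaplectic cocycle. The key point is that $F$ consists of elements acting on each $\gg_\alpha$ by $\pm 1$ (Lemma \ref{m-components}), and Lemma \ref{layers-nilpotent} pairs the roots in $\Delta_r^+$ as $\alpha, \sigma_r(\alpha)$ with $\alpha + \sigma_r(\alpha) = \beta_r$; since $F$ fixes $x_r \in \gz_r$ (shown in the proof of Proposition \ref{m-components-2}), $F$ acts on the pair $\gg_\alpha \oplus \gg_{\sigma_r(\alpha)}$ by a matrix of the form $\mathrm{diag}(\epsilon, \epsilon)$ with $\epsilon = \pm 1$, i.e. by $\pm I$ on each symplectic plane. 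Such $\pm I$ blocks lift canonically to the metaplectic group (the preimage of $-I$ splits over any such finite subgroup because the relevant signs multiply out: the number of $(-1)$-eigenplanes is what the metaplectic sign sees, and one checks it is always even here, or one simply defines the lift using the explicit action on the polarized model). Concretely I would build $\pi_\lambda^\dagger(f)$ for $f \in F$ as the obvious intertwining operator on $\cH_\lambda$ implementing the symplectic change of variables, check it is well-defined (no sign ambiguity) using the above parity count, and check $\pi_\lambda^\dagger(f)\pi_\lambda(n)\pi_\lambda^\dagger(f)^{-1} = \pi_\lambda(fnf^{-1})$ by the definition of the Schr\"odinger/stepwise model.

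Then I would assemble the pieces: define $\pi_\lambda^\dagger$ on $N \rtimes M_\lambda = N \rtimes (F \cdot (M^0)_\lambda)$ by combining the connected-group extension with the $F$-operators, verify the cocycle identity on overlaps $F \cap (M^0)_\lambda$ (these elements are in the center-related $2$-part and both constructions give the same operator because both are pinned down by the action on the polarizing subspace), and conclude $\pi_\lambda^\dagger$ is a genuine (not projective) unitary representation on $\cH_\lambda$ restricting to $\pi_\lambda$ on $N$.

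\textbf{Main obstacle.} The hard part will be the sign/parity bookkeeping for $F$: one must verify that the candidate operators $\pi_\lambda^\dagger(f)$, $f \in F$, compose without a residual $\pm 1$ discrepancy — equivalently that the metaplectic cocycle restricted to the image of $F$ in $Sp(\gl/\gz, b_\lambda)$ is a coboundary. This is where the strong orthogonality of the $\beta_r$ (Lemma \ref{layers-nilpotent}) and the fact that $F$ fixes each $x_r$ (proof of Proposition \ref{m-components-2}) do the real work, forcing $F$ to act diagonally and evenly on the symplectic planes; getting that parity count exactly right, uniformly across all layers and all real reductive $G$, is the crux.
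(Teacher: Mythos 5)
There is a genuine gap, and it sits in the part of your argument you treat as automatic: the connected component. You assert that the order-$2$ Mackey obstruction ``already vanishes on $(M^0)_\lambda$'' because a connected group has no nontrivial torsion central extension by $U(1)$. That principle is false: a compact connected group can carry nontrivial $2$-torsion in $H^2(\,\cdot\,;U(1))$ --- the standard example is $SO(3)$, whose nontrivial $U(1)$-central extension is pushed out from $SU(2)\to SO(3)$, and this is exactly the shape of a metaplectic-type obstruction for a connected compact subgroup of $Sp(\gl_r/\gz_r,b_{\lambda_r})$. Your subsidiary claim that $(M^0)_\lambda$ is connected because it is a principal isotropy group of a compact connected linear group is also false in general (e.g.\ $SO(3)$ acting on its $5$-dimensional representation has principal isotropy $\Z/2\times\Z/2$), and nothing in \cite{HH1970} gives connectedness here. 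So the bulk of $M_\lambda$ is left unhandled, while your careful sign bookkeeping is spent on $F$, which is not where the real issue lies.

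What actually kills the obstruction --- and what the paper uses --- is not a cocycle computation at all but the fact that each $M_{\lambda_r}$, being compact and preserving the symplectic form $b_{\lambda_r}$ on $\gl_r/\gz_r$, preserves a compatible complex structure and hence acts through a unitary group $U(n_r)$; and the Fock representation of the Heisenberg group extends to a \emph{genuine} (not merely projective) representation of $H\rtimes U(n)$ \cite{W1975}. That single fact handles connected and disconnected parts of $M_{\lambda_r}$ uniformly, with no metaplectic double cover ever appearing. The paper then threads these layerwise extensions through the stepwise construction of $\pi_\lambda$ (extend $\pi_{\lambda_1}^\dagger$ to $(L_1L_2)\rtimes M_{\lambda_1}$, restrict to $M_{\lambda_1}\cap M_{\lambda_2}$, tensor with the restricted extension of $\pi_{\lambda_2}^\dagger$, and iterate). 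To repair your proposal you would need to replace the ``connectedness implies no obstruction'' step by this $U(n)$/Fock-model argument (or an equivalent explicit splitting on all of $M_{\lambda_r}$), at which point the separate treatment of $F$ and the compatibility check on $F\cap (M^0)_\lambda$ become unnecessary.
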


\begin{proof}
The group $M$ preserves each $\gz_r^*$, so 
$M_\lambda = \bigcap_{\lambda_r} M_{\lambda_r}$
where $\lambda = \sum \lambda_r$ with $\lambda_r \in \gz_r^*$\,.  Recall the
construction of $\pi_\lambda$ from the decomposition $N = L_1\dots L_m$
of (\ref{setup}) and the square integrable representations $\pi_{\lambda_r}$
of the Heisenberg (or abelian) groups $L_r$ from \cite{W2013}\,.  The point is
that $\pi_{\lambda_1}$ extends to $\widetilde{\pi_{\lambda_1}} \in
\widehat{L_1L_2}$ and then we have $\pi_{\lambda_1 + \lambda_2} :=
\widetilde{\pi_{\lambda_1}} \hat\otimes \pi_{\lambda_2}$, 
$\pi_{\lambda_1 + \lambda_2}$ extends to
$\widetilde{\pi_{\lambda_1 + \lambda_2}} \in \widehat{L_1L_2L_3}$ giving
$\pi_{\lambda_1 + \lambda_2 + \lambda_3} := 
\widetilde{\pi_{\lambda_1 + \lambda_2}} \hat\otimes \pi_{\lambda_3}$, etc.
Note that we use tilde to denote extension to the next step in the
decomposition (\ref{setup}) of $N$.  
\medskip

The Fock representation of the $2n+1$ dimensional Heisenberg group $H$
extends to the semidirect product $H\rtimes U(n)$ \cite{W1975}; so each
$\pi_{\lambda_r}$ extends to $L_r\rtimes M_{\lambda_r}$\,.
We use this to modify the construction of $\pi_\lambda$ just described.
We will use dagger to denote extension from $N_*$ to $N_*\rtimes M_*$\,, 
prime to denote dagger together with tilde, and double prime to denote 
an appropriate restriction of dagger or prime.
\medskip

Let $\pi^\dagger_{\lambda_1}$ denote the extension of $\pi_{\lambda_1}$ from
$L_1$ to $L_1\rtimes M_{\lambda_1}$\,.  Now extend $\pi^\dagger_{\lambda_1}$
(instead of $\pi_{\lambda_1}$), obtaining an extension $\pi'_{\lambda_1}$
of $\pi_{\lambda_1}$ from $L_1\rtimes M_{\lambda_1}$ to 
$(L_1L_2) \rtimes M_{\lambda_1}$\,.
It restricts to a representation $\pi''_{\lambda_1}$ of
$(L_1L_2)\rtimes (M_{\lambda_1}\cap M_{\lambda_2})$.  We have the extension
$\pi^\dagger_{\lambda_2}$ of $\pi_{\lambda_2}$ from $L_2$ to 
$L_2\rtimes M_{\lambda_2}$\,; let $\pi''_{\lambda_2}$ denote its
restriction to $L_2\rtimes (M_{\lambda_1}\cap M_{\lambda_2})$.
That gives us an extension $\pi^\dagger_{\lambda_1 + \lambda_2} :=
\pi''_{\lambda_1} \hat\otimes \pi''_{\lambda_2}$ of 
$\pi_{\lambda_1 + \lambda_2}$
from $L_1L_2$ to $(L_1L_2)\rtimes (M_{\lambda_1}\cap M_{\lambda_2})$.
Continuing this way, we construct the extension of $\pi_\lambda$ from
$N$ to $N\rtimes M_\lambda$\,.
\end{proof}

\begin{remark}{\rm One can also prove Lemma \ref{no-obstruction}
by combining the Mackey obstructions $[\gamma_r] \in H^2(M_{\lambda_r};U(1))$
to extension of $\pi_{\lambda_r}$ from $N_r$ to $N_r\rtimes M_{\lambda_r}$.
In effect the cocycle $\gamma \in Z^2(M_\lambda;U(1))$ whose cohomology
class is the Mackey obstruction to extension of
$\pi_\lambda$ from $N$ to $N\rtimes M_\lambda$, is cohomologous to the
pointwise product of the $(\gamma_r)|_{M_\lambda \times M_\lambda}$,
and each $[(\gamma_r)|_{M_\lambda \times M_\lambda}] \in H^2(M_\lambda;U(1))$
is trivial because each $[\gamma_r] \in H^2(M_{\lambda_r};U(1))$ is trivial.}
\end{remark}

Each $\lambda \in \gt^*$ now defines classes
\begin{equation}\label{nm-lambda-family}
\cE(\lambda) := \left \{\pi_\lambda^\dagger \otimes \gamma \mid
\gamma \in \widehat{M_\lambda}\right \}
\text{ and } \cF(\lambda) := \left \{\Ind_{NM_\lambda}^{NM} 
(\pi_\lambda^\dagger \otimes \gamma )\mid \pi_\lambda^\dagger
\otimes \gamma \in \cE(\lambda)\right \}
\end{equation} 
of irreducible unitary representations of $N\rtimes M_\lambda$ and
$NM$.  The Mackey little group method, plus the fact that the
Plancherel density on $\widehat{N}$ is polynomial on $\gs^*$\,, 
and $\gs^*\setminus\gu^*$ has measure $0$ in $\gt^*$, 
gives us

\begin{proposition}\label{rep-mn}
Plancherel measure for $NM$ is concentrated on the set 
$\bigcup_{\lambda \in \gu^*}\cF(\lambda)$ of $($equivalence classes
of\,$)$ irreducible representations given by 
$\eta_{\lambda,\gamma} := \Ind_{NM_\lambda}^{NM} 
(\pi_\lambda^\dagger \otimes \gamma)$ with
$\pi_\lambda^\dagger \otimes \gamma \in \cE(\lambda)$ and 
$\lambda \in \gu^*$.  Further
$$
\eta_{\lambda,\gamma}|_N = 
\left . \left ( \Ind_{NM_\lambda}^{NM}(\pi_\lambda^\dagger 
	\otimes \gamma)\right ) \right |_N = \int_{M/M_\lambda} 
	(\dim \gamma)\, \pi_{\Ad^*(m)\lambda}\, d(mM_\lambda).
$$
\end{proposition}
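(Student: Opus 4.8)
The plan is to apply the Mackey machine for the semidirect product $NM = N \rtimes M$, using $N$ as the closed normal subgroup whose dual is well understood. First I would recall that Plancherel measure for $N$ is concentrated on $\{\pi_\lambda \mid \lambda \in \gt^*\}$ by Theorem \ref{iwasawa-layers}, and that $M$ acts on $\widehat{N}$ by the coadjoint action, carrying $\pi_\lambda$ to (a representative of) $\pi_{\Ad^*(m)\lambda}$ by Lemma \ref{cone}; in particular $\gt^*$ is $\Ad^*(M)$-stable. The orbit-space decomposition of $\gt^*$ under $M$ is, up to a null set, the principal orbit set $\gu^*$: by Lemma \ref{princ-orbit}, $\gt^* \setminus \gu^*$ has codimension $\geqq 2$ in $\gs^*$, hence is null, and since Plancherel density on $\widehat N$ is the polynomial $|\Pf(\lambda)|$ on $\gs^*$ (absolutely continuous with respect to Lebesgue measure), the push-forward of Plancherel measure sees only $\gu^*$. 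This lets us restrict attention to $\lambda \in \gu^*$, where the isotropy $M_\lambda = F\cdot(M^0)_\lambda$ is under control by Lemma \ref{components}.

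Next I would invoke Mackey's normal-subgroup analysis: every irreducible unitary representation of $NM$ whose restriction to $N$ is supported on the orbit $\Ad^*(M)\lambda$ is, up to unitary equivalence, of the form $\Ind_{NM_\lambda}^{NM}(\rho)$ for an irreducible $\rho$ of $NM_\lambda$ lying over $\pi_\lambda$. Here the crucial input is Lemma \ref{no-obstruction}: the Mackey obstruction vanishes, so $\pi_\lambda$ extends to $\pi_\lambda^\dagger \in \widehat{NM_\lambda}$ on the same space $\cH_\lambda$. Consequently the irreducibles of $NM_\lambda$ over $\pi_\lambda$ are exactly $\pi_\lambda^\dagger \otimes \gamma$ for $\gamma \in \widehat{M_\lambda}$ — this is the standard fact that tensoring by characters (here, by pullbacks of irreducibles of $M_\lambda$) gives a bijection once one extension exists. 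Thus the representations over the orbit of $\lambda$ are precisely the $\eta_{\lambda,\gamma} = \Ind_{NM_\lambda}^{NM}(\pi_\lambda^\dagger \otimes \gamma)$, i.e. the set $\cF(\lambda)$ of (\ref{nm-lambda-family}). Assembling over a measurable cross-section of orbits in $\gu^*$ and transporting Plancherel measure of $N$ through the induction correspondence shows Plancherel measure for $NM$ is concentrated on $\bigcup_{\lambda \in \gu^*} \cF(\lambda)$.

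For the restriction formula, I would apply the Mackey subgroup theorem (induction–restriction): restricting $\Ind_{NM_\lambda}^{NM}(\pi_\lambda^\dagger \otimes \gamma)$ back to $N$ gives a direct integral over the double-coset space $N\backslash NM / NM_\lambda \cong M/M_\lambda$ of the representations $n \mapsto (\pi_\lambda^\dagger \otimes \gamma)(m^{-1}nm)$ for coset representatives $m$. Since $\pi_\lambda^\dagger|_N = \pi_\lambda$ and conjugation by $m$ carries $\pi_\lambda$ to $\pi_{\Ad^*(m)\lambda}$ while the $\gamma$-factor contributes only its multiplicity $\dim\gamma$, the integrand at $mM_\lambda$ is $(\dim\gamma)\,\pi_{\Ad^*(m)\lambda}$, integrated against the $M$-invariant measure $d(mM_\lambda)$ on $M/M_\lambda$. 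This yields the displayed identity for $\eta_{\lambda,\gamma}|_N$.

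The main obstacle I anticipate is not any single algebraic step but the measure-theoretic bookkeeping: one must verify that the orbit map $\gt^* \to \gt^*/M$ admits a Borel cross-section (guaranteed since $M$ is compact), that the fiber measure on each orbit is genuinely the $M$-invariant one, and — most delicately — that the Plancherel measure of $NM$ decomposes so that its projection to the orbit space matches the projection of the Plancherel measure of $N$. This last point is where one uses that $NM$ is of type I (which follows because $N$ is type I with smooth dual and $M$ is compact) together with the fact that the little-group $M_\lambda$ is itself compact, so $\widehat{M_\lambda}$ is discrete and the fiber Plancherel measure is just counting measure weighted by $(\dim\gamma)^2$; modulo these standard but somewhat technical verifications, which I would cite rather than redo, the proof is a direct application of the Mackey machine.
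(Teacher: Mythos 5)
Your proposal is correct and follows essentially the same route as the paper, which justifies Proposition \ref{rep-mn} in a single sentence by appeal to the Mackey little group method together with the facts that the Plancherel density $|\Pf(\lambda)|$ is polynomial (hence absolutely continuous) on $\gs^*$ and that $\gs^*\setminus\gu^*$ is null. Your expansion — vanishing of the obstruction via Lemma \ref{no-obstruction}, parametrization of the fiber by $\widehat{M_\lambda}$, and the induction–restriction computation over $M/M_\lambda$ — supplies exactly the standard details the paper leaves implicit.
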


In view of Lemma \ref{princ-orbit} there is a Borel section $\sigma$ to
$\gu^* \to \gu^*/\Ad^*(M)$ which picks out an element in each $M$-orbit
so that $M$ has the same isotropy subgroup at each of those elements.  In
other words in each $M$-orbit on $\gu^*$ we measurably choose an element
$\lambda = \sigma(\Ad^*(M)\lambda)$ such that those isotropy subgroups
$M_\lambda$ are all the same.  Let us denote
\begin{equation}\label{m-diamond}
M_\diamondsuit \text{: isotropy subgroup of } M \text{ at }
\sigma(\Ad^*(M)\lambda) \text{ for every } \lambda \in \gu^*
\end{equation}
Then we can replace $M_\lambda$ by $M_\diamondsuit$, independent of $\lambda
\in \gu^*$, in Proposition \ref{rep-mn}.  That lets us assemble to 
representations of Proposition \ref{rep-mn} for a Plancherel Formula, as 
follows.
Since $M$ is compact, we have the Schwartz space $\cC(NM)$ just as in the
discussion of $\cC(N)$ between (\ref{c-d}) and Theorem \ref{plancherel-general},
except that the pullback $\exp^*\cC(NM) \ne \cC(\gn + \gm)$.  The
same applies to $\cC(NA)$ and $\cC(NAM)$

\begin{proposition}\label{planch-mn}
Let $f \in \cC(NM)$ and write $(f_m)(n) = f(nm) = ({}_nf)(m)$ for $n \in N$
and $m \in M$.  The Plancherel density at $\Ind_{NM_\diamondsuit}^{NM} 
(\pi_\lambda^\dagger \otimes \gamma)$ is $(\dim \gamma)|\Pf(\lambda)|$ 
and the Plancherel Formula for $NM$ is
$$
f(nm) = c\int_{\gu^*/\Ad^*(M)}\, 
	\sum_{\cF(\lambda)}
	\tr \eta_{\lambda,\gamma}(_n f _m)\cdot 
	\dim(\gamma)\cdot |\Pf(\lambda)|d\lambda
$$
where $c = 2^{d_1 + \dots + d_m} d_1! d_2! \dots d_m!$\,, from 
{\rm (\ref{c-d})},
as in {\rm Theorem \ref{plancherel-general}}.
\end{proposition}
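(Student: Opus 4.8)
The plan is to assemble the pieces already in hand: Proposition \ref{rep-mn} identifies the representations carrying Plancherel measure for $NM$, namely $\eta_{\lambda,\gamma} = \Ind_{NM_\lambda}^{NM}(\pi_\lambda^\dagger \otimes \gamma)$ for $\lambda \in \gu^*$ and $\gamma \in \widehat{M_\lambda}$; the Borel section $\sigma$ and the replacement of $M_\lambda$ by the fixed $M_\diamondsuit$ make these depend measurably on $\Ad^*(M)\lambda$ alone. First I would write down the abstract Plancherel decomposition for the compact extension $N \rtimes M$, using Mackey theory: since $M$ is compact the little-group analysis is clean, the unitary dual of $NM$ relevant to Plancherel is parametrized by $(\Ad^*(M)\lambda, \gamma)$, and the Plancherel measure factors as (the $N$-Plancherel measure $|\Pf(\lambda)|\,d\lambda$ pushed to $\gu^*/\Ad^*(M)$) times (the counting measure on $\widehat{M_\diamondsuit}$ weighted by $\dim\gamma$). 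The weight $\dim\gamma$ on the discrete factor is exactly the Plancherel weight for the compact group $M_\diamondsuit$ appearing in the induced representation, and it is standard that inducing from $NM_\diamondsuit$ to $NM$ contributes a further factor of $\dim\gamma$ to the formal degree, so the Plancherel density at $\eta_{\lambda,\gamma}$ is $(\dim\gamma)|\Pf(\lambda)|$.

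Next I would verify the trace identity. For $f \in \cC(NM)$ one has $\eta_{\lambda,\gamma}(f) = \int_{NM} f(x)\eta_{\lambda,\gamma}(x)\,dx$, trace class because $M$ is compact and $\pi_\lambda$ is (after restriction to $N/S$) square integrable with the explicit norm of Theorem \ref{iwasawa-layers}. Writing $f(nm)$ via the mixed notation ${}_nf$ and $f_m$, and unwinding the induced picture over $M/M_\diamondsuit$ together with the restriction formula $\eta_{\lambda,\gamma}|_N = \int_{M/M_\lambda}(\dim\gamma)\pi_{\Ad^*(m)\lambda}\,d(mM_\lambda)$ of Proposition \ref{rep-mn}, the character $\tr\,\eta_{\lambda,\gamma}({}_nf_m)$ becomes an integral over $M/M_\diamondsuit$ of the $N$-characters $\Theta_{\pi_{\Ad^*(m)\lambda}}$ paired against the $M_\diamondsuit$-character of $\gamma$. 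Then I would invoke the $N$-Plancherel formula of Theorem \ref{iwasawa-layers} in the fiber direction and the Peter--Weyl decomposition of $\cL^2(M)$ in the group direction; the $\Ad^*(M)$-invariance of $|\Pf(\lambda)|$ (Lemma \ref{cone}) lets one fold the integral over $\gt^*$ down to an integral over $\gu^*/\Ad^*(M)$ against $\sum_{\cF(\lambda)}$, since $\gt^* \setminus \gu^*$ has measure zero. Collecting constants yields the stated formula with the same $c = 2^{d_1 + \dots + d_m}d_1!\cdots d_m!$ as on $N$, because the compact factor contributes no extra constant (Haar measure on $M$ is normalized to total mass one).

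The main obstacle I anticipate is bookkeeping of measures and constants across the three ingredients — the continuous $N$-Plancherel measure, the $\Ad^*(M)$-orbit integral, and the discrete Peter--Weyl sum over $\widehat{M_\diamondsuit}$ — and in particular checking that the induction step $NM_\diamondsuit \to NM$ produces exactly the factor $\dim\gamma$ (and not, say, $(\dim\gamma)^2$ or $[M:M_\diamondsuit]$-type volume corrections) in the Plancherel density. This is where one must be careful: the formal-degree/Frobenius-reciprocity computation for $\Ind$ from a subgroup of a compact group, transplanted to the semidirect product, has to be normalized consistently with the $\cL^2(N/S)$ norm in Theorem \ref{iwasawa-layers}. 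Once that normalization is pinned down, the remainder is assembling Fubini across a product measure space and is routine. A secondary, more technical point is justifying the trace-class property and the interchange of $\int_{NM}$ with $\int_{\gu^*/\Ad^*(M)}$ on the Schwartz space $\cC(NM)$; this follows from the rapid decrease of $f$ together with the polynomial bound $|\Pf(\lambda)|$ and the finiteness of $\sum_\gamma (\dim\gamma)^2 \ll (1+\|\lambda\|)^{\deg}$ type estimates coming from the smooth dependence of $M_\diamondsuit$-types, exactly as in the nilpotent case between (\ref{c-d}) and Theorem \ref{plancherel-general}.
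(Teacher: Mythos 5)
Your proposal is correct and follows essentially the route the paper intends: the paper in fact states Proposition \ref{planch-mn} without a separate proof, as a direct assembly of Proposition \ref{rep-mn} (Mackey little group method with the fixed isotropy $M_\diamondsuit$ via the Borel section), the Peter--Weyl/Plancherel formula for the compact group $M_\diamondsuit$ supplying the weight $\dim\gamma$, and the $N$-Plancherel density $|\Pf(\lambda)|$ folded over $\gu^*/\Ad^*(M)$. The detailed unwinding of the induced character that you sketch is exactly the computation the paper carries out later, in display (\ref{one-orbit}) of the proof of Theorem \ref{planch-man}, for the harder non-unimodular case $MAN$.
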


\section{The Pfaffian and the Dixmier--Puk\' anszky Operator}
\label{sec4}
\setcounter{equation}{0}
Let $Q$ be a separable locally compact group of type I.
Then \cite[\S 1]{LW1978} the Plancherel Formula for $Q$
has form 
\begin{equation}\label{LW}
f(x) = \int_{\widehat{Q}} \tr\pi(D(r(x)f)) d\mu_{_Q}(\pi)
\end{equation}
where $D$ is an invertible positive self adjoint operator on $\cL^2(Q)$,
conjugation--semi-invariant of weight equal to the modular function $\delta_Q$,
and $\mu$ is a positive Borel measure on the unitary dual $\widehat{Q}$.
The operator $D$ is needed for the following reason.  If $Q$ were unimodular
its Plancherel Formula would be of the form $f(1) = \int_{\widehat{Q}}
\tr\pi(f))d\mu_{_Q}(\pi)$ with both sides invariant under conjugation by
elements of $Q$.  In general, however, the left hand side $f(1)$ is 
conjugation--invariant while conjugation transforms $\pi(f) = \int_Q
f(x)\pi(x)dx$, and thus the the right hand side 
$\int_{\widehat{Q}}\tr\pi(f))d\mu_{_Q}(\pi)$,
by the modular function.  Thus the modular function has to be
somehow compensated, and that is the role of $D$.
If $Q$ is unimodular then $D$ is the identity and (\ref{LW}) reduces to the
usual Plancherel Formula.  The point is that semi-invariance of $D$
compensates any lack of unimodularity.  See \cite[\S 1]{LW1978} for
a detailed discussion, including a discussion of the domain of $D$
and $D^{1/2}$.
\medskip

Uniqueness of the pair $(D,\mu)$ remains unsettled, though of course
$D \otimes \mu$ is unique (up to normalization of Haar measures), so
one tries to find a ``best'' choice of $D$.  Given any such pair $(D,\mu)$
we refer to $D$ as a {\sl Dixmier--Puk\' anszky Operator} on $Q$ and
to $\mu$ as the associated {\sl Plancherel measure} on $\widehat{Q}$.
\medskip

In this section we exhibit an explicit Dixmier--Puk\' anszky Operator
for the minimal parabolic $P = MAN$ and its solvable subgroup $AN$.
Those groups are never unimodular.  Our Dixmier--Puk\' anszky Operator
is constructed from the Pfaffian polynomial $\Pf(\lambda)$ and a
certain ``quasi-central determinant'' function on $\gs^*$.
\medskip

Let $\delta$ denote the modular function on $P = MAN$.  As $M$ is compact
and $\Ad_P(N)$ is unipotent on $\gp$, $MN$ is in the kernel of $\delta$.  
So $\delta$ is determined by its values on $A$, where it is given by
$\delta(\exp(\xi)) = \exp(\tr(\ad(\xi)))$.  There $\xi = \log a \in \ga$.

\begin{lemma}\label{trace}
Let $\xi \in \ga$.  Then $\frac{1}{2}(\dim \gl_r + \dim \gz_r) \in \Z$
for $1\leqq r\leqq m$ and\\
\phantom{ii}{\rm (i)} the trace of $\ad(\xi)$ on $\gl_r$ is
  $\frac{1}{2}(\dim \gl_r + \dim \gz_r)\beta_r(\xi)$, \\
\phantom{i}{\rm (ii)} the trace of $\ad(\xi)$ on $\gn$ and on $\gp$ is
  $\frac{1}{2}\sum_r (\dim \gl_r + \dim \gz_r)\beta_r(\xi)$, and \\
{\rm (iii)}
  the determinant of $\Ad(\exp(\xi))$ on $\gn$ and on $\gp$ is
  $\prod_r \exp(\beta_r(\xi))^{\frac{1}{2} (\dim \gl_r + \dim \gz_r)}$.
\end{lemma}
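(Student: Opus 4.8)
The plan is to diagonalize $\ad(\xi)$ by means of the restricted root space decomposition of $\gn$ and then to exploit the pairing of $\Delta^+_r$ furnished by the involution $\sigma_r$ of {\rm (\ref{beta-reflect})}. Since $\xi \in \ga$, it acts on each restricted root space $\gg_\alpha$ by the scalar $\alpha(\xi)$, so $\tr(\ad(\xi)|_{\gg_\alpha}) = (\dim\gg_\alpha)\,\alpha(\xi)$. By Lemma \ref{fill-out} the space $\gl_r$ is the vector space direct sum of $\gg_{\beta_r}$ and the $\gg_\alpha$ with $\alpha \in \Delta^+_r$, and $\gg_{\beta_r}$ is the center $\gz_r$, so
\[
\tr(\ad(\xi)|_{\gl_r}) = (\dim\gz_r)\,\beta_r(\xi)
	+ \sum_{\alpha \in \Delta^+_r}(\dim\gg_\alpha)\,\alpha(\xi).
\]

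The heart of the matter is the second sum. By Lemma \ref{layers-nilpotent}, $\sigma_r$ is an involution of $\Delta^+_r$ with $\alpha + \sigma_r(\alpha) = \beta_r$; and since $\sigma_r = -s_{\beta_r}$ is the composite of a Weyl group reflection with $-1$, while $\gg_{-\gamma} = \theta\,\gg_\gamma$, the map $\sigma_r$ preserves root space dimensions. Reindexing the sum by $\sigma_r$ and averaging therefore gives
\[
\sum_{\alpha \in \Delta^+_r}(\dim\gg_\alpha)\,\alpha(\xi)
	= \tfrac12 \sum_{\alpha \in \Delta^+_r}(\dim\gg_\alpha)
	\bigl(\alpha(\xi) + \sigma_r(\alpha)(\xi)\bigr)
	= \tfrac12\Bigl(\,\sum_{\alpha \in \Delta^+_r}\dim\gg_\alpha\Bigr)\beta_r(\xi)
	= \tfrac12(\dim\gl_r - \dim\gz_r)\,\beta_r(\xi).
\]
Adding the central term yields $\tr(\ad(\xi)|_{\gl_r}) = \tfrac12(\dim\gl_r + \dim\gz_r)\,\beta_r(\xi)$, which is {\rm (i)}; and $\tfrac12(\dim\gl_r + \dim\gz_r) = \dim\gz_r + d_r \in \Z$, with $d_r$ as in {\rm (\ref{c-d}a)} (an integer because $\gl_r/\gz_r$ carries the nondegenerate alternating form $b_{\lambda_r}$), which is the integrality assertion.

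For {\rm (ii)}, $\gn$ is the vector space direct sum of the $\gl_r$ (Lemma \ref{fill-out}), so summing over $r$ gives the trace on $\gn$; and since $M$ centralizes $A$ and $\ga$ is abelian, $\ad(\xi)$ annihilates $\gm + \ga$, so in the $\ad(\xi)$-invariant decomposition $\gp = (\gm + \ga) \oplus \gn$ the trace on $\gp$ equals the trace on $\gn$. Part {\rm (iii)} then follows from $\det\Ad(\exp\xi) = \exp(\tr\,\ad(\xi))$ applied on $\gn$ and on $\gp$, the integrality in {\rm (i)} making the half-integer exponents in the asserted product unambiguous. The one genuine subtlety is the dimension preservation of $\sigma_r$ on $\Delta^+_r$; once that is granted the computation is forced. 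An alternative route to it, bypassing the Weyl group, is to note that when $L_r$ is nonabelian the form $b_{\lambda_r}$ pairs $\gg_\alpha$ nondegenerately with $\gg_{\beta_r - \alpha}$ inside $\gl_r/\gz_r$, forcing $\dim\gg_\alpha = \dim\gg_{\beta_r - \alpha}$ (and forcing $\dim\gg_{\beta_r/2}$ to be even in the fixed-point case of Lemma \ref{layers-nilpotent}).
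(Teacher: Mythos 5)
Your proof is correct and follows essentially the same route as the paper: both decompose $\gl_r = \gz_r + \sum_{\Delta_r^+}\gg_\alpha$, pair $\alpha$ with $\beta_r-\alpha$ via Lemma \ref{layers-nilpotent} using $\dim\gg_\alpha = \dim\gg_{\beta_r-\alpha}$ to get $\tfrac12(\dim\gv_r)\beta_r(\xi)$ on the non-central part, then sum over $r$ and exponentiate. Your extra care with why $\sigma_r$ preserves root-space dimensions, with the integrality via $d_r\in\Z$, and with the vanishing of the trace on $\gm+\ga$ only makes explicit what the paper leaves implicit.
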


\begin{proof} Decompose $\gl_r = \gz_r + \gv_r$ where $\gz_r = \gg_{\beta_r}$
is its center and $\gv_r = \sum_{\alpha \in \Delta_r^+}\gg_\alpha$\,.
The set $\Delta_r^+$ is the disjoint union of sets 
$\{\alpha, \beta_r - \alpha\}$ and (if $\frac{1}{2}\beta_r$ is a root)
$\{\frac{1}{2}\beta_r\}$.  That proves the integrality assertion. 
From (\ref{beta-reflect}) and 
Lemma \ref{layers-nilpotent} we have
$\dim \gg_\alpha = \dim\gg_{\beta_r - \alpha}$.
So the trace of $\ad(\xi)$ on $\gv_r$ adds
up to $\frac{1}{2}(\dim \gv_r)\beta_r(\xi)$.  On $\gz_r = \gg_{\beta_r}$
it is of course $(\dim \gz_r) \beta_r(\xi)$.  That proves (i).  For (ii)
we take the sum over $\{\beta_1, \dots , \beta_m\}$ and then for (iii)
we exponentiate.
\end{proof}

Since $\delta = \det\Ad$, Lemma \ref{trace}(iii) can be formulated as
\begin{lemma}\label{modular}
The modular function $\delta = \delta_P$ of $P = MAN$ is $\delta(man) 
= \prod_r \exp(\beta_r(\log a))^{\frac{1}{2} (\dim \gl_r + \dim \gz_r)}$.
The modular function $\delta_{AN}$ of $AN$ is $\delta_P|_{AN}$.
\end{lemma}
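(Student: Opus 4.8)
The plan is to deduce Lemma \ref{modular} directly from Lemma \ref{trace}(iii), which already does essentially all the work. The first step is to recall that for a connected Lie group $P$, the modular function is $\delta_P(p) = |\det \Ad_P(p)|^{-1}$ (or its inverse, depending on the left/right Haar convention), and that on the identity component this equals $\det \Ad_P(p)$ since $\Ad_P(\exp \xi) = \exp(\ad \xi)$ has positive determinant $\exp(\tr \ad \xi)$. Since we are told in the paragraph preceding Lemma \ref{trace} that $MN$ lies in the kernel of $\delta$ (because $M$ is compact, hence unimodular with $|\det \Ad| = 1$ on its image, and $\Ad_P(N)$ is unipotent, hence has determinant $1$), the value $\delta(man)$ depends only on the $A$-component, and indeed $\delta(man) = \delta(\exp \xi)$ where $\xi = \log a$.

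Next I would invoke Lemma \ref{trace}(iii), which states that $\det \Ad(\exp \xi)$ on $\gp$ equals $\prod_r \exp(\beta_r(\xi))^{\frac{1}{2}(\dim \gl_r + \dim \gz_r)}$. Combining this with the identification $\delta = \det \Ad$ on $P$ (valid since $P$ is connected — or one restricts to $P^0 = M^0AN$ and then notes the formula is visibly continuous and $M/M^0$ finite, so it extends), one immediately obtains
\begin{equation*}
\delta_P(man) = \prod_r \exp(\beta_r(\log a))^{\frac{1}{2}(\dim \gl_r + \dim \gz_r)}.
\end{equation*}
Here I would note that the exponents $\frac{1}{2}(\dim \gl_r + \dim \gz_r)$ are integers, already established in Lemma \ref{trace}, so the expression is unambiguous.

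For the second assertion, that $\delta_{AN} = \delta_P|_{AN}$, the argument is that $AN$ is a closed subgroup of $P$ and in fact $P = M \ltimes AN$ with $M$ compact; one checks directly that $\delta_{AN}(an) = |\det \Ad_{AN}(an)|^{\pm 1}$ and $\Ad_{AN}(an)$ acting on $\ga + \gn$ has the same determinant as $\Ad_P(an)$ acting on $\gp = \gm + \ga + \gn$, since $\Ad_P(an)$ preserves the decomposition $\gp = \gm \oplus (\ga+\gn)$ modulo nilpotent pieces and acts on $\gm$ with determinant $1$ (as $\ad(\ga+\gn)$ maps $\gm$ into $\ga + \gn$, so the induced action on the quotient $\gp/(\ga+\gn) \cong \gm$ is trivial, giving determinant $1$ on that factor). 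Hence the determinants agree and $\delta_{AN}(an) = \delta_P(an)$.

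I do not expect any real obstacle here: the lemma is essentially a restatement of Lemma \ref{trace}(iii) together with the standard fact $\delta = \det \Ad$. The only point requiring a word of care is the relation between $\delta_{AN}$ and $\delta_P|_{AN}$ — one must make sure the relevant adjoint determinant is computed on the right space, but since $\Ad(an)$ acts trivially on $\gp/(\ga + \gn)$ this is routine. I would keep the proof to a couple of sentences, citing Lemma \ref{trace}(iii) for the main formula and giving the one-line determinant comparison for the $AN$ statement.
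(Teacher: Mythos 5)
Your argument is correct and is essentially the paper's own: the text preceding Lemma \ref{trace} already notes that $MN \subset \ker\delta$ and that $\delta(\exp\xi)=\exp(\tr(\ad(\xi)))$ on $A$, so the paper derives Lemma \ref{modular} as an immediate reformulation of Lemma \ref{trace}(iii) via $\delta = \det\Ad$. Your extra care on the $AN$ statement (comparing $\det\Ad$ on $\ga+\gn$ with that on $\gp$, using that $\Ad(an)$ induces the identity on $\gp/(\ga+\gn)$) is a correct elaboration of a point the paper leaves implicit, also covered by Lemma \ref{trace}(iii) computing the same determinant on $\gn$ and on $\gp$.
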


We  consider semi-invariance of the Pfaffian.  Let $\xi \in \ga$ and
consider a basis $\{x_i\}$ of $\gv_r$, each element in some $\gg_\alpha$ with
$\alpha \in \Delta_r^+$, in which $b_\lambda$ has matrix consisting of $2\times 2$
blocks $\left ( \begin{smallmatrix} 0 & 1 \\ -1 & 0 \end{smallmatrix}\right )$
down the diagonal.  But $-\ad^*(\xi)(\lambda)[x_i,x_j] = 
\lambda(\ad(\xi)[x_i,x_j]) = \lambda[\ad(\xi)x_i,x_j] + \lambda([x_i,\ad(\xi)x_j]
= \beta_r(\xi)\lambda([x_i,x_j])$ as in the proof of Lemma \ref{trace}.
Now $(\ad(\xi)\Pf)|_{\gv_r}(\lambda) = \Pf|_{\gv_r}(-\ad^*(\xi)(\lambda))
= \frac{1}{2}\dim \gv_r\,\beta_r (\xi)\Pf|_{\gv_r}$\,.  Sum over $r$:

\begin{lemma}\label{trace2}
Let $\xi \in \ga$ and $a = \exp(\xi) \in A$.  Then 
$\ad(\xi)\Pf = \left (\frac{1}{2}\sum_r\, \dim (\gl_r/\gz_r) \beta_r(\xi)\right )\Pf$
and $\Ad(a)\Pf = 
\left ( \prod_r \exp(\beta_r(\xi))^{\frac{1}{2}\dim (\gl_r / \dim \gz_r)}\right )\Pf$.
\end{lemma}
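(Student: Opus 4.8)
The plan is to mimic the proof of Lemma \ref{trace}, working one layer $\gl_r$ at a time and then summing over $r$. Fix $\xi \in \ga$ and $a = \exp(\xi) \in A$. First I would recall that the Pfaffian factors as $\Pf = \prod_r \Pf_{\gl_r}(b_{\lambda_r})$ by $(\ref{c-d}\text{d})$, so it suffices to compute how $\ad(\xi)$, and hence $\Ad(a)$, acts on each factor $\Pf|_{\gv_r} := \Pf_{\gl_r}(b_{\lambda_r})$, viewed as a polynomial on $\gz_r^*$. The key computation is the one already displayed in the paragraph preceding the lemma: for $x_i, x_j$ in root spaces $\gg_\alpha, \gg_{\alpha'}$ with $\alpha,\alpha' \in \Delta_r^+$, one has $[x_i,x_j] \in \gg_{\alpha+\alpha'}$, and by Lemma \ref{layers-nilpotent} this bracket is nonzero only when $\alpha + \alpha' = \beta_r$, in which case $\ad(\xi)$ acts on it by the scalar $\beta_r(\xi)$. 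Hence $\lambda(\ad(\xi)[x_i,x_j]) = \beta_r(\xi)\,\lambda([x_i,x_j])$, i.e. $(-\ad^*(\xi)\lambda)([x_i,x_j]) = \beta_r(\xi)\, b_\lambda(x_i,x_j)$ on $\gv_r$.

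Next I would translate this into a statement about the Pfaffian polynomial. Choosing the symplectic basis $\{x_i\}$ of $\gv_r$ in which $b_\lambda$ is block-diagonal with blocks $\left(\begin{smallmatrix} 0 & 1 \\ -1 & 0\end{smallmatrix}\right)$, the identity above shows $b_{-\ad^*(\xi)\lambda} = \beta_r(\xi)\, b_\lambda$ as bilinear forms on $\gv_r$. Since the Pfaffian of a $2d_r \times 2d_r$ antisymmetric matrix is homogeneous of degree $d_r = \tfrac{1}{2}\dim(\gl_r/\gz_r) = \tfrac12 \dim \gv_r$ in the matrix entries, scaling the form by $\beta_r(\xi)$ multiplies its Pfaffian by $\beta_r(\xi)^{d_r}$. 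Therefore $(\ad(\xi)\Pf)|_{\gv_r}(\lambda) = \Pf|_{\gv_r}(-\ad^*(\xi)\lambda) = \left(\tfrac12 \dim \gv_r\right)\beta_r(\xi)\,\Pf|_{\gv_r}(\lambda)$ after differentiating, i.e. $\Pf|_{\gv_r}$ is a $\ad(\ga)$-semi-invariant of weight $\xi \mapsto \tfrac12\dim(\gl_r/\gz_r)\,\beta_r(\xi)$.

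To finish, I would use that $\Pf = \prod_r \Pf|_{\gv_r}$ is a product of semi-invariants, so its weight is the sum of the weights: $\ad(\xi)\Pf = \left(\tfrac12 \sum_r \dim(\gl_r/\gz_r)\,\beta_r(\xi)\right)\Pf$. Exponentiating the one-parameter group $t \mapsto \exp(t\xi)$ gives the multiplicative form $\Ad(a)\Pf = \left(\prod_r \exp(\beta_r(\xi))^{\frac12 \dim(\gl_r/\gz_r)}\right)\Pf$, which is exactly the asserted formula (with the evident reading of the exponent $\tfrac12\dim(\gl_r/\dim\gz_r)$ in the statement as $\tfrac12\dim(\gl_r/\gz_r)$). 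The only mild subtlety — hardly an obstacle — is bookkeeping the degenerate case where $\tfrac12\beta_r$ is a restricted root: then the basis vector(s) in $\gg_{\beta_r/2}$ still satisfy $[x_i,x_j] \in \gg_{\beta_r}$ when paired appropriately, so they contribute to the same $2\times 2$ blocks and the homogeneity count $d_r = \tfrac12\dim\gv_r$ is unaffected; this was already handled the same way in Lemma \ref{trace}. I expect no genuine difficulty: the proof is a direct transcription of the $\ad(\xi)$-eigenvalue computation of Lemma \ref{trace} into the language of the Pfaffian polynomial, using only homogeneity of $\Pf$ and the factorization $(\ref{c-d}\text{d})$.
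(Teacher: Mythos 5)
Your proposal is correct and follows essentially the same route as the paper: choose a symplectic basis of $\gv_r$ for $b_\lambda$, observe via Lemma \ref{layers-nilpotent} that $-\ad^*(\xi)\lambda$ scales $b_\lambda|_{\gv_r}$ by $\beta_r(\xi)$, invoke homogeneity of degree $\tfrac12\dim\gv_r$ of the Pfaffian factor, and sum over $r$ (then exponentiate). Your added care about the derivation versus multiplicative form and the $\tfrac12\beta_r$ case, and your reading of the exponent as $\tfrac12\dim(\gl_r/\gz_r)$, all match the paper's intent.
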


At this point it is convenient to introduce some notation and definitions.
\begin{definition} {\rm The algebra $\gs$ is the {\sl quasi-center} of 
$\gn$.  Then 
$\Det_{\gs^*}(\lambda):=\prod_r (\beta_r(\lambda))^{\dim \gg_{\beta_r}}$
is a polynomial function on $\gs^*$, the {\sl quasi-center determinant}. 
}
\end{definition}
If $\xi \in \ga$ and $a = \exp(\xi) \in A$ we compute % \hfill\newline
\begin{equation}\label{trace3}
\begin{aligned}
(\Ad(a)\Det_{\gs^*})(\lambda) &= \Det_{\gs^*}(\Ad^*(a^{-1})(\lambda)) \\ 
&= {\prod}_r (\beta_r(\Ad(a^{-1})^*\lambda))^{\dim \gg_{\beta_r}}
= {\prod}_r (\beta_r(\exp(\beta_r(\xi))\lambda))^{\dim \gg_{\beta_r}}.
\end{aligned}
\end{equation}

Combining Lemmas \ref{trace} and \ref{trace2} with (\ref{trace3}) we have

\begin{proposition}\label{quasi-invariance}
The product $\Pf\cdot\Det_{\gs^*}$ is an $\Ad(MAN)$-semi-invariant
polynomial on $\gs^*$ of degree 
$\frac{1}{2}(\dim \gn + \dim \gs)$ and of 
weight equal to the modular function $\delta_{MAN}$\,.
\end{proposition}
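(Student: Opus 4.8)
The statement bundles together three assertions about $\Pf\cdot\Det_{\gs^*}$: it is a polynomial on $\gs^*$, its degree is $\tfrac12(\dim\gn + \dim\gs)$, and it is $\Ad(MAN)$-semi-invariant of weight $\delta_{MAN}$. The plan is to dispatch the degree count first, since it is purely combinatorial, and then verify semi-invariance factor by factor across $M$, $A$, and $N$, leaning on the lemmas already established.

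\emph{Degree.} Both factors are visibly polynomial ($\Pf$ by definition in (\ref{c-d}d), $\Det_{\gs^*}$ as a product of powers of the linear forms $\beta_r$), so the product is a polynomial. For the degree, $\Pf = \prod_r \Pf_{\gl_r}(b_{\lambda_r})$ has degree $\sum_r d_r = \tfrac12\dim(\gn/\gs)$ by (\ref{c-d}a), since each Pfaffian $\Pf_{\gl_r}$ has degree $d_r = \tfrac12\dim(\gl_r/\gz_r)$. Meanwhile $\Det_{\gs^*} = \prod_r \beta_r^{\dim\gg_{\beta_r}}$ has degree $\sum_r \dim\gg_{\beta_r} = \sum_r \dim\gz_r = \dim\gs$. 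Adding, $\tfrac12\dim(\gn/\gs) + \dim\gs = \tfrac12(\dim\gn - \dim\gs) + \dim\gs = \tfrac12(\dim\gn + \dim\gs)$, as claimed.

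\emph{Semi-invariance.} For $M$: every ingredient defining $\Pf$ is $\Ad^*(M)$-equivariant (this was already used in Lemma \ref{cone}), and $M$ centralizes $A$ so it fixes each $\beta_r$, hence preserves each $\gg_{\beta_r}$ and fixes $\Det_{\gs^*}$; thus $M$ acts trivially on the product, consistent with $M \subset \ker\delta$. For $N$: $\Ad^*(N)$ acts on $\gs^*$ by the coadjoint action, which on the quasi-center is trivial modulo the fact that $[\gl_r,\gz_s]=0$ and the brackets land in $\gv$; concretely $\Ad^*(n)\lambda$ and $\lambda$ agree on $\gs$, so both factors are $N$-invariant, again matching $N\subset\ker\delta$. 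The substantive case is $A$. Here Lemma \ref{trace2} gives $\Ad(a)\Pf = \prod_r \exp(\beta_r(\xi))^{\frac12\dim(\gl_r/\gz_r)}\cdot\Pf$ for $a=\exp(\xi)$, and the computation (\ref{trace3}) gives $\Ad(a)\Det_{\gs^*} = \prod_r \exp(\beta_r(\xi))^{\dim\gg_{\beta_r}}\cdot\Det_{\gs^*}$ — note $\beta_r(\exp(\beta_r(\xi))\lambda) = \exp(\beta_r(\xi))\beta_r(\lambda)$ by linearity. Multiplying, the weight of $\Pf\cdot\Det_{\gs^*}$ under $\Ad(a)$ is $\prod_r \exp(\beta_r(\xi))^{\frac12\dim(\gl_r/\gz_r) + \dim\gz_r}$. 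Since $\dim(\gl_r/\gz_r) = \dim\gl_r - \dim\gz_r$, the exponent is $\tfrac12(\dim\gl_r - \dim\gz_r) + \dim\gz_r = \tfrac12(\dim\gl_r + \dim\gz_r)$, which is exactly the exponent appearing in $\delta_P(man) = \prod_r \exp(\beta_r(\log a))^{\frac12(\dim\gl_r+\dim\gz_r)}$ from Lemma \ref{modular}. Hence the weight equals $\delta_{MAN}$.

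\emph{Main obstacle.} No step is genuinely hard; the only thing requiring care is bookkeeping of which $\tfrac12(\dim\gl_r \pm \dim\gz_r)$ appears where, and confirming that the half-integers combine to the integer exponents of $\delta_P$ — this is guaranteed by the integrality statement in Lemma \ref{trace} together with the pairing $\{\alpha,\beta_r-\alpha\}$ of roots in $\Delta_r^+$ (Lemma \ref{layers-nilpotent}), which forces $\dim(\gl_r/\gz_r)$ to be even. I would also remark explicitly that, because $\Pf$ and $\Det_{\gs^*}$ are each $\Ad^*(M)$-semi-invariant and $M$ normalizes $AN$, the three partial computations above genuinely assemble into $\Ad(MAN)$-semi-invariance on the full group rather than merely on the three subgroups separately.
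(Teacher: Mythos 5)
Your proposal is correct and follows essentially the same route as the paper, which proves the proposition simply by combining Lemma \ref{trace} (equivalently Lemma \ref{modular}), Lemma \ref{trace2}, and the computation (\ref{trace3}): the $A$-weights $\prod_r \exp(\beta_r(\xi))^{\frac12\dim(\gl_r/\gz_r)}$ and $\prod_r \exp(\beta_r(\xi))^{\dim\gz_r}$ multiply to $\delta_P(a)$, and the degrees $\frac12\dim(\gn/\gs)$ and $\dim\gs$ add to $\frac12(\dim\gn+\dim\gs)$. You merely spell out the $M$- and $N$-invariance (which the paper leaves implicit via $MN\subset\ker\delta$) in more detail than the text does.
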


Our fixed decomposition $\gn = \gv + \gs$ gives $N = VS$ where
$V = \exp(\gv)$ and $S = \exp(\gs)$.  Now define
\begin{equation}\label{defdp}
D: \text{ Fourier transform of } \Pf\cdot\Det_{\gs^*} 
	\text{, acting on $MAN = MAVS$  by acting on the  $S$  variable.}
\end{equation}
We use the fact that the definition of $\cC(N)$ between
(\ref{c-d}) and Theorem \ref{plancherel-general} applies to $\cC(MAN)$:

\begin{theorem}\label{dp-min-parab}
The operator $D$ of {\rm (\ref{defdp})} is an invertible 
self-adjoint differential operator of degree 
$\frac{1}{2}(\dim \gn + \dim \gs)$ on $\cL^2(MAN)$ with dense
domain $\cC(MAN)$, and it is $\Ad(MAN)$-semi-invariant of
weight equal to the modular function $\delta_{MAN}$\,.  
In other words $|D|$ is a Dixmier--Puk\' anszky Operator
on $MAN$ with domain equal to the space of rapidly decreasing $C^\infty$
functions.
\end{theorem}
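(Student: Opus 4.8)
\ The plan is to reduce the statement to elementary Fourier analysis along the quasi-center $S=\exp\gs$ and then to quote Proposition~\ref{quasi-invariance} for the semi-invariance. Recall from (\ref{defdp}) that, in the coordinates $MAN=MAVS$ (with $V=\exp\gv$, $S=\exp\gs$, $A=\exp\ga$ each carried diffeomorphically onto its Lie algebra by $\exp$), $D$ is obtained from the polynomial $\Pf\cdot\Det_{\gs^*}$ by applying the Euclidean Fourier transform in the $S$-variable. Since $S$ is a vector group this makes $D$ a differential operator of order $\deg(\Pf\cdot\Det_{\gs^*})$ in the quasi-central directions, and by Proposition~\ref{quasi-invariance} that order is $\tfrac12(\dim\gn+\dim\gs)$. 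Concretely, with $\{x_i\}$ a basis of $\gs$ and $\Pf\cdot\Det_{\gs^*}=p\bigl(\lambda(x_1),\dots,\lambda(x_k)\bigr)$, the operator $D$ is the constant-coefficient operator $p(-i\,\partial_{x_1},\dots,-i\,\partial_{x_k})$ in the $S$-variable.

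I would first dispose of the purely analytic assertions. The polynomial $\Pf\cdot\Det_{\gs^*}$ is \emph{real}: $\Det_{\gs^*}(\lambda)=\prod_r\beta_r(\lambda)^{\dim\gg_{\beta_r}}$ is manifestly real, and $\Pf(\lambda)$ is the Pfaffian of a real antisymmetric form. Writing $\cL^2(MAN)\cong\cL^2(MAV)\,\hat\otimes\,\cL^2(S)$ and letting $\cF_S\colon\cL^2(S)\to\cL^2(\gs^*)$ be the Plancherel transform of $S$, the operator $D$ is carried by $1\otimes\cF_S$ to multiplication by the real function $\Pf\cdot\Det_{\gs^*}$ on the $\gs^*$-factor. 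Multiplication by a real polynomial is an (unbounded) self-adjoint operator, so $D$ is self-adjoint; and since a nonzero polynomial vanishes only on a null set, this multiplication operator --- hence $D$ --- is injective with dense range, i.e. invertible in the sense of \cite[\S1]{LW1978}. Being a differential operator with constant coefficients in the exponential coordinates on $S$, $D$ maps $\cC(MAN)$ into itself (which uses only the description of $\cC(MAN)$ recalled between (\ref{c-d}) and Theorem~\ref{plancherel-general}); $\cC(MAN)$ is dense in $\cL^2(MAN)$, and $\cF_S$ carries the functions in $\cC(MAN)$ that are compactly supported in the $M,A,V$ variables onto a core for multiplication by $\Pf\cdot\Det_{\gs^*}$, so $\cC(MAN)$ is a core for $D$, which is the meaning of ``with dense domain $\cC(MAN)$''. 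Everything just said applies verbatim to $|D|$, which via $\cF_S$ corresponds to multiplication by $|\Pf\cdot\Det_{\gs^*}|$: it is positive self-adjoint with the same domain (it is no longer a differential operator but still preserves $\cC(MAN)$).

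The heart of the matter is the semi-invariance, and Proposition~\ref{quasi-invariance} supplies its essential content: the symbol $\Pf\cdot\Det_{\gs^*}$ is $\Ad^*(MAN)$-semi-invariant of weight $\delta_{MAN}$ --- trivial on $MN$, equal to $\prod_r\exp(\beta_r(\log a))^{\frac12(\dim\gl_r+\dim\gz_r)}$ on $A$, by Lemmas~\ref{trace}, \ref{trace2} and (\ref{trace3}). For $g\in MA$, which normalizes $\gs$, the conjugation $\alpha_g$ (given by $(\alpha_gf)(x)=f(g^{-1}xg)$) acts on the $S$-factor through $\Ad(g)|_\gs$, hence on the $\gs^*$-side of $D$ through $\Ad^*(g)$, so $\cF_S$ conjugates $D$ into multiplication by $(\Pf\cdot\Det_{\gs^*})\circ\Ad^*(g)^{-1}=\delta_{MAN}(g)\,\Pf\cdot\Det_{\gs^*}$; that is, $\alpha_gD\alpha_g^{-1}=\delta_{MAN}(g)\,D$. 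The delicate case is $g\in N$, where $\delta_{MAN}\equiv1$ but $S$ is not normal: here one reconciles the differential operator $D$ with the conjugation action of $N$ using the quasi-center structure of \cite{W2012}, \cite{W2013} --- in the Plancherel decomposition of $\cL^2(MAN)$ the stepwise square integrable representations are indexed, on the $\Pf$-nonsingular set, by $\lambda$ modulo the $N$-coadjoint orbits $\lambda+\gs^\perp$ of Theorem~\ref{plancherel-general}, along which $\Pf\cdot\Det_{\gs^*}$ is constant, so conjugation by $N$ leaves the relevant data unchanged and $\alpha_nD\alpha_n^{-1}=D$. Combining the two cases gives $\alpha_gD\alpha_g^{-1}=\delta_{MAN}(g)\,D$ for all $g\in MAN$, and likewise for $|D|$ since $\delta_{MAN}>0$; by the criterion recalled from \cite[\S1]{LW1978}, $|D|$ is then a Dixmier--Puk\'anszky operator on $MAN$ with domain the rapidly decreasing smooth functions $\cC(MAN)$. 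I expect the genuine obstacle to be precisely this last step --- handling conjugation by the non-normalizing subgroup $N$ --- whereas the reality, invertibility, order and $A$-semi-invariance of $D$ follow immediately from Proposition~\ref{quasi-invariance}.
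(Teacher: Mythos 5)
Your first two paragraphs are, in compressed form, exactly the paper's proof of Theorem \ref{dp-min-parab}: Wolf's argument is precisely that $D$, being the Fourier transform along the $S$-variable of the real polynomial $\Pf\cdot\Det_{\gs^*}$, is an invertible self-adjoint differential operator with dense domain $\cC(MAN)$, whose degree equals the degree of the polynomial, and that Proposition \ref{quasi-invariance} supplies both the value $\tfrac12(\dim\gn+\dim\gs)$ of that degree and the $\Ad(MAN)$-semi-invariance of weight $\delta_{MAN}$. Your extra detail (the factorization $\cL^2(MAV)\,\hat\otimes\,\cL^2(S)$, reality of the symbol, the core argument, the treatment of $|D|$) is fine and consistent with the paper.

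The problem is your third paragraph, and as written it is a gap rather than a proof. For $g\in MA$ the argument is genuine, because $\Ad(g)$ preserves $\gv$ and $\gs$, so conjugation acts linearly on the $S$-coordinate and the symbol transforms by $\Ad^*(g)$, where Proposition \ref{quasi-invariance} applies. For $n\in N$, however, constancy of $\Pf\cdot\Det_{\gs^*}$ along the coadjoint orbits $\lambda+\gv^*$ is a statement about the symbol and the representations $\pi_\lambda$, not about the differential operator in the $MAVS$ coordinates, and it does not by itself yield $\alpha_nD\alpha_n^{-1}=D$. The reason it is not automatic is the one you half-identify: $\gs$ is not an ideal of $\gn$; for $s<r$ one has $[\gl_s,\gz_r]\subset\gl_s$ with components in $\gv_s$ (already for $G=SL(4,\R)$, where $\gz_2=\gg_{e_2-e_3}$ and $[\gg_{e_1-e_2},\gg_{e_2-e_3}]=\gg_{e_1-e_3}\subset\gv_1$). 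Hence conjugation by $n\in N$, written in the coordinates $mavs$, moves the $S$-variable into the $V$-variable; it is not a family of translations in $s$ parametrized by the other variables, and the chain rule produces correction terms mixing $\partial_s$ with $\partial_v$, so ``conjugation by $N$ leaves the relevant data unchanged'' is an assertion, not an argument. To close this you would need either an explicit computation in those coordinates, or to verify the statement that is actually used downstream in the proof of Theorem \ref{planch-man}, namely that in each stepwise square integrable representation one has $\pi_\lambda(Df)=(\Pf\cdot\Det_{\gs^*})(\lambda)\,\pi_\lambda(f)$, and then combine this scalar action with the $\Ad^*(MA)$-equivariance of the symbol; conjugation by $n\in N$ is then harmless because it fixes each class $\pi_\lambda$ and $\delta_{MAN}$ is trivial on $MN$. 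Note for comparison that the paper makes no separate analysis of conjugation by $N$ at all: it reads the full $\Ad(MAN)$-semi-invariance directly off Proposition \ref{quasi-invariance}, so your paragraphs one and two already reproduce its proof, while your paragraph three raises a legitimate point but does not settle it.
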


\begin{proof}
Since it is the Fourier transform of a real polynomial, $D$ is a 
differential operator which is invertible and self-adjoint on 
$\cL^2(MAN)$.  Its degree as a differential operator is the
same as that of the polynomial.  Further, it has dense
domain $\cC(MAN)$.  Proposition \ref{quasi-invariance} ensures that 
the degree is $\frac{1}{2}(\dim \gn + \dim \gs)$ and that $D$ is 
$\Ad(MAN)$--semi--invariant as asserted.
\end{proof}

\section{Generic Representations}
\label{sec5}
\setcounter{equation}{0}
In this section we complete the description of a dense open subset 
of the unitary dual of $\widehat{P} = \widehat{MAN}$ that carries
Plancherel measure.  In the next section we will combine this with 
Theorem \ref{dp-min-parab}, using the framework of (\ref{LW}), to 
obtain explicit Plancherel Formulae for $MAN$ and $AN$.
\medskip

There are two paths here.  We can obtain the generic representations
of $P$ by inducing the representations $\Ind_{NM_\lambda}^{NM} 
\eta_{\lambda,\gamma}$ discussed in Proposition \ref{rep-mn}.  But one
has a cleaner final statement if he avoids that induction by stages 
and induces directly from $N\rtimes (MA)_\lambda$ to $P$.
\medskip

Since $\lambda \in \gt^*$ has nonzero projection on each summand $\gz_r^*$
of $\gs^*$, and $a \in A$ acts by the positive real scalar
$\exp(\beta_r(\log(a)))$ on $\gz_r$, 
\begin{equation}\label{alambda}
A_\lambda = \exp(\{\xi \in \ga \mid \text{each } \beta_r(\xi) = 0\}),
\text{ independent of } \lambda \in \gt^*.
\end{equation}
Because of this independence, and in view of our earlier definition of
$\ga_\diamondsuit = \{\xi \in \ga \mid \text{ each } \beta_r(\xi) = 0\}$,
we define
\begin{equation}\label{adiamond}
A_\diamondsuit = A_\lambda 
  \text{ for any (and thus for all) } \lambda \in \gt^*.
\end{equation}

\begin{lemma}\label{ma-diamond}
In the notation of {\rm (\ref{m-diamond})} and {\rm (\ref{adiamond})},
if $\lambda \in \sigma(\gu^*)$ then the stabilizer
$(MA)_\lambda = M_\diamondsuit A_\diamondsuit$\,.
\end{lemma}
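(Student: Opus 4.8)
The plan is to show that the stabilizer $(MA)_\lambda$ splits as a direct product $M_\lambda \times A_\lambda$ and then invoke the already-established identifications $M_\lambda = M_\diamondsuit$ (for $\lambda \in \sigma(\gu^*)$, by \eqref{m-diamond}) and $A_\lambda = A_\diamondsuit$ (by \eqref{alambda}, \eqref{adiamond}). Since $M$ and $A$ commute in $MAN$ and $M \cap A = \{1\}$, the group $MA$ is the direct product $M \times A$, so any element $ma \in MA$ fixing $\lambda \in \gs^*$ under $\Ad^*$ satisfies $\Ad^*(m)\Ad^*(a)\lambda = \lambda$. The key point is that the $M$-action and the $A$-action on $\gs^*$ are ``independent'' in a way that forces $m$ and $a$ separately to stabilize $\lambda$.

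First I would record how $A$ acts on $\gs^* = \bigoplus_r \gz_r^*$: since each $\gz_r = \gg_{\beta_r}$, an element $a = \exp(\xi) \in A$ acts on $\gz_r^*$ by the positive scalar $\exp(\beta_r(\xi))$, exactly as used to derive \eqref{alambda}. Next, by Lemma \ref{resroot_irred} (Kostant's irreducibility), $M^0$ acts irreducibly on each $\gz_r = \gg_{\beta_r}$, and by Proposition \ref{m-components-2} the finite part $F$ acts trivially on $\gs^*$; in particular $\Ad^*(M)$ preserves each $\gz_r^*$ and, being a group of \emph{orthogonal} transformations for an $M$-invariant inner product (chosen as in the discussion before Lemma \ref{princ-orbit}), it preserves the norm on each $\gz_r^*$. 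Now suppose $ma \in (MA)_\lambda$ with $\lambda = \sum_r \lambda_r$, $\lambda_r \in \gz_r^*$, each $\lambda_r \ne 0$ because $\lambda \in \gt^* \supset \sigma(\gu^*)$. Applying the $\gz_r^*$-component of the fixed-point equation gives $\exp(\beta_r(\xi))\,\Ad^*(m)\lambda_r = \lambda_r$; taking norms and using that $\Ad^*(m)$ is orthogonal on $\gz_r^*$ yields $\exp(\beta_r(\xi)) = 1$, i.e. $\beta_r(\xi) = 0$ for every $r$. Hence $a \in A_\lambda = A_\diamondsuit$ and $a$ acts trivially on $\gs^*$, so the equation collapses to $\Ad^*(m)\lambda = \lambda$, i.e. $m \in M_\lambda$. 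Conversely $M_\lambda A_\diamondsuit \subset (MA)_\lambda$ is immediate since $A_\diamondsuit$ acts trivially on $\gs^*$. Therefore $(MA)_\lambda = M_\lambda \times A_\diamondsuit$, and for $\lambda \in \sigma(\gu^*)$ the choice of the Borel section $\sigma$ (see \eqref{m-diamond}) gives $M_\lambda = M_\diamondsuit$, so $(MA)_\lambda = M_\diamondsuit A_\diamondsuit$ as claimed.

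The only genuinely load-bearing step is the norm argument that forces $\beta_r(\xi) = 0$: it relies on having an $M$-invariant inner product on $\gs^*$ (available because $M$ is compact) together with the fact that $a \in A$ scales $\gz_r^*$ by a \emph{positive} real, so a fixed nonzero vector forces the scalar to be $1$. Both ingredients are already in place in the excerpt, so no serious obstacle remains; the rest is the trivial direction and bookkeeping with the direct-product structure of $MA$ and the definitions of $M_\diamondsuit$, $A_\diamondsuit$.
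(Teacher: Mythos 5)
Your proposal is correct and follows essentially the same route as the paper's own proof: decompose $\lambda = \sum_r \lambda_r$ with each $\lambda_r \ne 0$, use an $\Ad^*(M)$-invariant norm on each $\gz_r^*$ together with the fact that $A$ scales $\gz_r^*$ by the positive factor $\exp(\beta_r(\xi))$ to force $\beta_r(\xi)=0$, and then reduce to $m \in M_\lambda = M_\diamondsuit$. The only difference is expository (you separate the $m$ and $a$ factors explicitly before taking norms, while the paper takes the norm of $\Ad^*(\exp(\xi)m)\lambda_r$ directly), which changes nothing of substance.
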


\begin{proof} As $\lambda \in \gt^*$ it has expression $\lambda =
\sum \lambda_r$ with $0 \ne \lambda_r \in \gz^* = \gg_{\beta_r}$\,.
Let $\xi \in \ga$ and $m \in M$ with $\Ad^*(\exp(\xi)m)\lambda = \lambda$.
Then each $\Ad^*(\exp(\xi)m)\lambda_r = \lambda_r$\,.  In an
$\Ad^*(M)$-invariant inner product, $||\Ad^*(\exp(\xi)m)\lambda_r|| =
\exp(\beta_r(\xi))||\lambda_r||$ so each $\beta_r(\xi) = 0$, i.e.
$\xi \in \ga_\diamondsuit$ and $\Ad^*(\exp(\xi)m)\lambda =
\Ad^*(m)\lambda$.  Thus $m \in M_\diamondsuit$ and $\exp(\xi) \in
A_\diamondsuit$\,, as asserted.
\end{proof}

Now we are ready to use the Mackey little group method.  First, there is no
problem with obstructions:

\begin{lemma}\label{no-ma-obstruction}
Let $\lambda \in \sigma(\gu^*)$ and note the extension 
$\pi_\lambda^\dagger$ of $\pi_\lambda$ from $N$ to $NM_\diamondsuit$
defined by {\rm Lemma \ref{no-obstruction}}. Then $\pi_\lambda^\dagger$
extends further to a unitary representation 
$\widetilde{\pi_\lambda}$ of $NM_\diamondsuit A_\diamondsuit$
on the representation space of $\pi_\lambda$\,.
\end{lemma}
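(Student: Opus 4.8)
The plan is to extend $\pi_\lambda^\dagger$ across the one extra factor $A_\diamondsuit$, which is a connected abelian group acting on $N$ and fixing $\lambda$. Since $A_\diamondsuit$ is simply connected abelian (a vector group), the Mackey obstruction in $H^2(A_\diamondsuit; U(1))$ to extending $\pi_\lambda^\dagger$ from $N M_\diamondsuit$ to $N M_\diamondsuit A_\diamondsuit$ is automatically trivial: continuous $U(1)$-valued $2$-cocycles on a vector group are cohomologically trivial, so obstruction theory alone gives \emph{some} extension. What has to be checked is that the extension can be taken compatibly with the $M_\diamondsuit$-action already in hand, i.e. that $\widetilde{\pi_\lambda}$ restricts to $\pi_\lambda^\dagger$ on $NM_\diamondsuit$. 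I would handle this exactly as in the proof of Lemma \ref{no-obstruction}: build the extension factor-by-factor through the decomposition $N = L_1 L_2 \cdots L_m$.

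The key point is that $A_\diamondsuit = \exp(\ga_\diamondsuit)$ where $\ga_\diamondsuit$ is the common kernel of the $\beta_r$, so $A_\diamondsuit$ acts on each $\gl_r/\gz_r$ by the character $\exp(\beta_r(\cdot)) \equiv 1$ and trivially on each $\gz_r = \gg_{\beta_r}$ (Proposition \ref{m-components-2} already records that $\Ad(\ga_\diamondsuit)$ vanishes on $\sum \R x_r$, and by $M^0$-irreducibility on each $\gz_r$ via Lemma \ref{resroot_irred} it is trivial on all of $\gz_r$). So $A_\diamondsuit$ acts on each Heisenberg (or abelian) group $L_r$ preserving the symplectic form $b_{\lambda_r}$ on $\gl_r/\gz_r$ up to the trivial scalar; thus $A_\diamondsuit$ lands in the symplectic group $Sp(\gl_r/\gz_r)$ acting on $L_r$, and the Fock/Shale--Weil construction extends each $\pi_{\lambda_r}$ from $L_r$ to $L_r \rtimes A_\diamondsuit$, compatibly with the extension to $L_r \rtimes M_{\lambda_r}$ (since $M_{\lambda_r}$ and $A_\diamondsuit$ both act symplectically and commute). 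Using the dagger/tilde/prime/double-prime bookkeeping of Lemma \ref{no-obstruction} verbatim — with $M_*$ replaced by $M_* A_\diamondsuit$ throughout — one assembles $\widetilde{\pi_\lambda}$ of $N \rtimes M_\diamondsuit A_\diamondsuit = N M_\diamondsuit A_\diamondsuit$, agreeing with $\pi_\lambda^\dagger$ on $N M_\diamondsuit$.

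Concretely, I would phrase the proof as: at each step one has an extension of $\pi_{\lambda_1 + \cdots + \lambda_r}$ from $N_r$ to $N_r \rtimes (M_{\lambda_1}\cap\cdots\cap M_{\lambda_r}) A_\diamondsuit$; the metaplectic extension of $\pi_{\lambda_{r+1}}$ from $L_{r+1}$ to $L_{r+1} \rtimes M_{\lambda_{r+1}} A_\diamondsuit$ restricts to $L_{r+1} \rtimes (M_{\lambda_1}\cap\cdots\cap M_{\lambda_{r+1}}) A_\diamondsuit$; tensoring with the appropriate restriction of the already-extended representation gives the next stage; after $m$ steps one reaches $N \rtimes M_\diamondsuit A_\diamondsuit$. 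That the group $(M_{\lambda_1}\cap \cdots \cap M_{\lambda_m}) A_\diamondsuit$ really is $M_\diamondsuit A_\diamondsuit$ uses Lemma \ref{components}, and that the product group is $(MA)_\lambda$ uses Lemma \ref{ma-diamond}, so the statement matches the claimed group.

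The main obstacle I anticipate is not the obstruction-vanishing (which is free) but verifying the \emph{compatibility} of the $A_\diamondsuit$-extension with the Weil/Fock realization at each step — that the same representation space and the same cocycle-splitting work simultaneously for $M_{\lambda_r}$ and for $A_\diamondsuit$. This is really a statement that $M_{\lambda_r} A_\diamondsuit$ embeds in the relevant symplectic (or its metaplectic cover restricted to a section), and that the metaplectic cover splits over this subgroup; since $M_{\lambda_r}$ is compact and $A_\diamondsuit$ is a vector group, the cover splits over $M_{\lambda_r}^0 A_\diamondsuit$, and the finite-group part $F \cap M_{\lambda_r}$ is handled exactly as in \cite{W1975}, \cite{W2013}. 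So the whole argument is a routine adaptation of Lemma \ref{no-obstruction}, and I would present it in two short paragraphs: one reducing to the per-$L_r$ metaplectic extension, one running the same inductive tensor-product assembly with $A_\diamondsuit$ carried along at every stage.
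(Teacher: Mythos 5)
Your first paragraph \emph{is} the paper's entire proof: the paper simply observes that $A_\diamondsuit$ is a vector group, so $H^2(A_\diamondsuit;U(1))$ is trivial and the Mackey obstruction to extending across the quotient $NM_\diamondsuit A_\diamondsuit / NM_\diamondsuit \cong A_\diamondsuit$ vanishes. The compatibility issue you then raise --- whether the extension can be chosen to restrict to $\pi_\lambda^\dagger$ on $NM_\diamondsuit$ --- is not actually a gap in that argument: in the Mackey formalism one applies the obstruction theory to the irreducible representation $\pi_\lambda^\dagger$ of the normal subgroup $NM_\diamondsuit$ (not to $\pi_\lambda$ on $N$), and once the class in $H^2(A_\diamondsuit;U(1))$ vanishes the resulting representation of $NM_\diamondsuit A_\diamondsuit$ is by construction an extension of $\pi_\lambda^\dagger$. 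So your second and third paragraphs, which redo the whole stepwise Fock/metaplectic assembly of Lemma \ref{no-obstruction} with $A_\diamondsuit$ carried along at every stage, are a correct but genuinely different and much heavier route: they buy an explicit realization of $\widetilde{\pi_\lambda}$ (useful if one wants formulas), at the cost of re-verifying splitting of the metaplectic cover over $M_{\lambda_r}A_\diamondsuit$ at each step, none of which the paper needs. One small caution if you keep the abstract argument: the relevant cohomology is Moore's measurable cohomology, for which $H^2(\R^n;U(1))$ is \emph{not} trivial in general (it is the space of alternating bicharacters); the clean way to conclude is that this group is a real vector space, hence torsion--free, while Duflo's theorem \cite{D1972} puts the obstruction at order $1$ or $2$ --- or, as you do, to produce the extension explicitly.
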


\begin{proof}  Since $A_\diamondsuit$ is a vector group, it 
retracts to a point, so $H^2(A_\diamondsuit;U(1)) 
= H^2(\text{point};U(1)) = \{1\}$.  Thus
the Mackey obstruction vanishes.  
\end{proof}

Let $\lambda \in \sigma(\gu^*)$.  Note that
$\widehat{A_\diamondsuit}$ consists of the unitary characters 
$\exp(i\phi): a \mapsto e^{i\phi(\log a)}$ with $\phi \in \ga_\diamondsuit^*$.
With that notation, the representations of $P$
corresponding to $\lambda$ are the

\begin{equation}\label{lambda-family}
\pi_{\lambda,\gamma,\phi} := \Ind_{NM_\diamondsuit A_\diamondsuit}^{NMA}
  (\widetilde{\pi_\lambda} \otimes \gamma \otimes \exp(i \phi))
  \text{ where } \gamma \in \widehat{M_\diamondsuit} \text{ and }
  \phi \in \ga^*_\diamondsuit\,.
\end{equation}
Here the action of $A$ fixes $\gamma$ because $A$ centralizes $M$, and it
fixes $\phi$ because $A$ is commutative, so
\begin{equation} \label{man-equiv}
\pi_{\lambda,\gamma,\phi}\cdot \Ad((ma)^{-1})
 = \pi_{\Ad^*(ma)\lambda, \gamma, \phi}
\end{equation}

\begin{proposition}\label{rep-man}
Plancherel measure for $MAN$ is concentrated on the set 
of unitary equivalence classes of representations
$\pi_{\lambda,\gamma,\phi}$ for $\lambda \in \sigma(\gu^*)$, 
$\gamma \in \widehat{M_\diamondsuit}$\, and
   $\phi \in \ga_\diamondsuit^*$\,.  The equivalence
class of $\pi_{\lambda,\gamma,\phi}$ depends only on
$(\Ad^*(MA)\lambda, \gamma, \phi)$.
\end{proposition}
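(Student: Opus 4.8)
The plan is to apply the Mackey little group machine to the semidirect product $P = MAN = N \rtimes (MA)$, with abelian-type normal subgroup playing the role of $N$ and $MA$ acting on $\widehat{N}$. First I would recall that by Theorem \ref{iwasawa-layers}, Plancherel measure for $N$ is concentrated on $\{\pi_\lambda \mid \lambda \in \gt^*\}$, and that by Lemma \ref{princ-orbit} the complement $\gt^* \setminus \gu^*$ has codimension $\geqq 2$ in $\gs^*$, hence measure zero; moreover $\gu^*$ is saturated under $\Ad^*(M)$, and by Lemma \ref{cone} under dilations, so $\Ad^*(MA)$ preserves $\gu^*$. Thus, up to Plancherel-null sets, the $MA$-orbit space on $\widehat{N}$ is parametrized by $\sigma(\gu^*)$, the Borel section chosen after (\ref{m-diamond}). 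The general Mackey theory (e.g. as in \cite{LW1978} or the standard references) then says Plancherel measure for $P$ is supported on representations induced from the stabilizer $N \rtimes (MA)_\lambda$ of a point $\lambda$ in the fiber, tensored with an irreducible of $(MA)_\lambda$ carrying the appropriate (trivial, here) Mackey cocycle.

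The key steps, in order: (1) invoke Lemma \ref{ma-diamond} to identify the stabilizer $(MA)_\lambda = M_\diamondsuit A_\diamondsuit$ for $\lambda \in \sigma(\gu^*)$; (2) invoke Lemma \ref{no-ma-obstruction} to get the canonical extension $\widetilde{\pi_\lambda}$ of $\pi_\lambda$ to $N M_\diamondsuit A_\diamondsuit$ with vanishing Mackey obstruction, so that every irreducible of $N M_\diamondsuit A_\diamondsuit$ restricting to a multiple of $\pi_\lambda$ on $N$ is exactly $\widetilde{\pi_\lambda} \otimes \gamma \otimes \exp(i\phi)$ with $\gamma \in \widehat{M_\diamondsuit}$ and $\phi \in \ga_\diamondsuit^*$ (using that $\widehat{M_\diamondsuit A_\diamondsuit} = \widehat{M_\diamondsuit} \times \widehat{A_\diamondsuit}$ since the product is direct and $A_\diamondsuit$ is a vector group); (3) form $\pi_{\lambda,\gamma,\phi}$ as in (\ref{lambda-family}) by inducing up to $P$, and apply the Mackey irreducibility/classification criterion to conclude these are irreducible and exhaust (mod null sets) the part of $\widehat{P}$ carrying Plancherel measure; (4) for the dependence statement, use (\ref{man-equiv}): conjugation by $ma$ sends $\pi_{\lambda,\gamma,\phi}$ to $\pi_{\Ad^*(ma)\lambda,\gamma,\phi}$, and since inducing from conjugate subgroups along conjugate data yields equivalent representations, the class of $\pi_{\lambda,\gamma,\phi}$ depends only on the $\Ad^*(MA)$-orbit of $\lambda$ together with $(\gamma,\phi)$; conversely distinct orbits or distinct $(\gamma,\phi)$ give inequivalent representations by the injectivity part of the Mackey parametrization.

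The main obstacle I expect is step (3), specifically making sure the Mackey machine applies cleanly: one needs the action of $MA$ on $\widehat{N}$ to be nice enough (smooth/regular Borel) on the conull set $\gu^*$, the stabilizers to vary measurably — which is exactly what the section $\sigma$ and the constancy of $M_\diamondsuit$, $A_\diamondsuit$ guarantee — and the relevant cocycle to be trivial, which is Lemmas \ref{no-obstruction} and \ref{no-ma-obstruction}. A secondary subtlety is that $P$ is non-unimodular, so "Plancherel measure" here means the measure $\mu$ paired with the Dixmier--Puk\'anszky operator $D$ of Theorem \ref{dp-min-parab}; but the support of $\mu$ is determined by the abstract Mackey decomposition of $\cL^2(P)$ and is insensitive to the choice of $D$, so this does not affect the argument. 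Everything else — the tensor-product structure of $\widehat{M_\diamondsuit A_\diamondsuit}$, the reduction to $\sigma(\gu^*)$, and the orbit-dependence of the induced class — is routine once the setup is in place, so I would state those briefly and spend the detail on verifying the hypotheses of the little group method.
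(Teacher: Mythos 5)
Your proposal is correct and follows essentially the same route as the paper, which states Proposition \ref{rep-man} as a direct application of the Mackey little group method after the preparatory results (Lemma \ref{ma-diamond} for the stabilizer, Lemma \ref{no-ma-obstruction} for the vanishing obstruction, the section $\sigma$ for measurability, and (\ref{man-equiv}) for the orbit dependence), exactly the steps you list. The only nitpick is the phrase ``abelian-type normal subgroup'': $N$ is of course nonabelian, and what is needed is that it is type I with regular $MA$-action on $\widehat{N}$, which you do address correctly later in the argument.
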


Representations of $AN$ are the case $\gamma = 1$.  In effect, let
$\pi'_\lambda$ denote the obvious extension $\widetilde{\pi_\lambda}|_{AN}$
of the stepwise square integrable
representation $\pi_\lambda$ from $N$ to $NA_\diamondsuit$ 
where $\widetilde{\pi_\lambda}$ is given by Lemma \ref{no-ma-obstruction}.
Denote
\begin{equation}\label{irr-na}
	\pi_{\lambda,\phi} = \Ind_{NA_\diamondsuit}^{NA}
	(\pi'_\lambda \otimes \exp(i \phi)) \text{ where }
	\lambda \in \gu^* \text{ and } \phi \in \ga_\diamondsuit^*.
\end{equation}
Then 
$\pi_{\lambda,\phi}$ and $\pi_{\lambda',\phi}$ are equivalent if
and only if $\lambda' \in \Ad^*(A)\lambda$.  We have proved

\begin{corollary}\label{rep-ma}
Plancherel measure for $AN$ is concentrated on the set
$\{\pi_{\lambda,\phi} \mid  \lambda \in \gu^* \text{ and } 
\phi \in \ga_\diamondsuit^*\}$
of $($equivalence classes of\,$)$ irreducible representations of
$AN = NA$ described in {\rm (\ref{irr-na})}.
\end{corollary}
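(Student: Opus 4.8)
The plan is to read off Corollary \ref{rep-ma} as the specialization to $AN$ (equivalently, to the value $\gamma = 1$) of the Mackey normal-subgroup analysis that produced Proposition \ref{rep-man}, supplying only the few steps that are genuinely about $AN$ rather than $MAN$. First I would note that $AN$ is exponential solvable, hence of type I, so the Mackey little-group method is available for the normal subgroup $N \triangleleft NA$; by Theorem \ref{iwasawa-layers} the Plancherel measure of $N$ is carried by $\{\pi_\lambda \mid \lambda \in \gt^*\}$, the map $\lambda \mapsto \pi_\lambda$ is injective there (distinct $\lambda \in \gt^* \subset \gs^*$ give distinct coadjoint orbits $\gv^* + \lambda$), and $A$ acts on this set through $\Ad^*$. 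Since each $a \in A$ scales the summand $\gz_r^*$ of $\gs^*$ by the positive number $\exp(\beta_r(\log a))$, the $A$-action on $\gt^*$ factors through a linear action of $(\R_{>0})^m$ by independent dilations; its orbits are locally closed and its orbit equivalence relation is smooth, which is exactly what the Mackey machine requires. Passing from $\gt^*$ to $\gu^*$ (whose complement in $\gs^*$ has codimension $\geqq 2$, hence is Plancherel-null, by Lemma \ref{princ-orbit}) costs nothing measure-theoretically and brings the parametrization into line with Proposition \ref{rep-man}.

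Next I would feed in the two $AN$-specific facts already on hand: by (\ref{alambda})--(\ref{adiamond}) the $A$-stabilizer of $\pi_\lambda$ is the fixed vector group $A_\diamondsuit = \exp(\ga_\diamondsuit)$ for every $\lambda \in \gt^*$, and the Mackey obstruction to extending $\pi_\lambda$ from $N$ to $NA_\diamondsuit$ vanishes since $H^2(A_\diamondsuit;U(1)) = \{1\}$ for the vector group $A_\diamondsuit$ (the argument of Lemma \ref{no-ma-obstruction}), giving the extension $\pi'_\lambda$ of (\ref{irr-na}). The irreducible unitary representations of $NA_\diamondsuit$ lying over $\pi_\lambda$ are then precisely $\pi'_\lambda \otimes \chi$ with $\chi \in \widehat{A_\diamondsuit}$, and $\widehat{A_\diamondsuit}$ is identified with $\ga_\diamondsuit^*$ via $\phi \mapsto \exp(i\phi)$; inducing up, $\pi_{\lambda,\phi} = \Ind_{NA_\diamondsuit}^{NA}(\pi'_\lambda \otimes \exp(i\phi))$ is irreducible by the Mackey irreducibility criterion. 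The normal-subgroup analysis then asserts that, outside the Plancherel-null set arising from $\gt^* \setminus \gu^*$, these $\pi_{\lambda,\phi}$ carry the Plancherel measure of $AN$, which is the statement of the corollary.

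Finally I would record the equivalence claim stated just before the corollary: $\pi_{\lambda,\phi} \cong \pi_{\lambda',\phi'}$ forces the two $A$-orbits in $\widehat{N}$ to coincide, i.e. $\lambda' \in \Ad^*(A)\lambda$, and then, $A_\diamondsuit$ being abelian with $\pi'_\lambda|_{A_\diamondsuit}$ a fixed representation, the twisting characters must agree, so $\phi = \phi'$; conversely, taking $m = 1$ in the $\Ad((ma)^{-1})$-equivariance (\ref{man-equiv}) gives $\pi_{\lambda,\phi}\cdot\Ad(a^{-1}) \cong \pi_{\Ad^*(a)\lambda,\phi}$, so orbit-equivalent parameters yield equivalent representations. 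The only step requiring real care --- and the main obstacle --- is the measure-theoretic hypothesis of the Mackey machine: one must know that the $A$-action on the standard Borel space $\{\pi_\lambda \mid \lambda \in \gt^*\}$ of generic representations of $N$ is smooth, so that the normal-subgroup analysis recovers \emph{all} of $\widehat{AN}$ up to a Plancherel-null set and a Borel section of the orbit space exists. This is precisely where the linearity of the dilation action on the $\Pf$-nonsingular locus, together with Lemma \ref{princ-orbit}, does the work; every remaining ingredient is a direct restriction of the $P = MAN$ computation carried out in Sections \ref{sec3}--\ref{sec5}.
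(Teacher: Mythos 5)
Your proposal is correct and follows essentially the same route as the paper, which obtains Corollary \ref{rep-ma} simply as the $\gamma = 1$ specialization of the Mackey normal-subgroup analysis for $N \triangleleft NA$ used in Proposition \ref{rep-man}, with the stabilizer $A_\diamondsuit$ of (\ref{adiamond}), the vanishing obstruction of Lemma \ref{no-ma-obstruction}, and the extension $\pi'_\lambda = \widetilde{\pi_\lambda}|_{AN}$. You merely make explicit some standard hypotheses (type I, smoothness of the dilation action on $\gt^*$) that the paper leaves implicit.
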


Finally we describe the set $\Ad^*(MA)\lambda$ of Proposition \ref{rep-man}.
A result of C.C. Moore says that $\Ad(P_\C)$ has a Zariski open orbit
on $\gn_{_\C}^*$, so there is a finite set of open $\Ad(P)$-orbits
on $\widehat{N}$ such that Plancherel measure is concentrated on the
union of those open orbits.  Moore presented this and a number of related
results in a January 1972 seminar at Berkeley but he didn't publish it.
Carmona circulated a variation on this but he also seems to have left it
unpublished.  Using Lemma \ref{ma-diamond}, Moore's result leads directly to
\begin{lemma}\label{finite-ma-orbit}
The $\Pf$-nonsingular principal orbit set $\gu^*$ is a finite union of
open $\Ad^*(MA)$-orbits.
\end{lemma}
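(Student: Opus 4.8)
The plan is to combine the already-established structural facts with Moore's theorem on Zariski-open coadjoint orbits. First I would observe that $\gu^*$ is, by its definition in (\ref{defregset}) and by Lemma \ref{princ-orbit}, an $\Ad^*(M)$-stable, dilation-stable, dense open subset of $\gt^*$ with complement of codimension $\geqq 2$ in $\gs^*$; in particular it is $\Ad^*(MA)$-stable, since $A$ acts on each $\gz_r^*$ by a positive scalar $\exp(\beta_r(\log a))$ and hence $\Ad^*(A)$ commutes with dilations and preserves $\gt^*$. So the task reduces to showing that $\gu^*$ decomposes into \emph{finitely many} $\Ad^*(MA)$-orbits, each open.

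Next I would bring in Moore's result, quoted in the paragraph just before the lemma: $\Ad(P_\C)$ has a Zariski-open orbit on $\gn_\C^*$. Intersecting with the real points, $\Ad^*(P) = \Ad^*(MAN)$ has finitely many open orbits on $\gn^*$ whose union is dense, and Plancherel measure is carried by that union. Pulling this back along the projection $\gn^* \to \gs^*$ (equivalently, using that $\pi_\lambda$ for $\lambda\in\gt^*$ is parametrized by $\gs^*$ and that $\Ad^*(N)\lambda = \gv^*+\lambda$ by Theorem \ref{iwasawa-layers}), the $\Ad^*(P)$-orbits on the generic part of $\gn^*$ correspond exactly to $\Ad^*(MA)$-orbits on the corresponding part of $\gt^*\subset\gs^*$. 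Therefore $\Ad^*(MA)$ has only finitely many orbits on a dense open subset of $\gt^*$, and those orbits are open.

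Then I would use Lemma \ref{ma-diamond} to pin the picture down on $\gu^*$ itself: for $\lambda$ in the section $\sigma(\gu^*)$ the stabilizer $(MA)_\lambda = M_\diamondsuit A_\diamondsuit$ is the \emph{same} subgroup for every such $\lambda$, so every $\Ad^*(MA)$-orbit meeting $\gu^*$ has the same dimension $\dim(MA) - \dim(M_\diamondsuit A_\diamondsuit)$, namely the maximal orbit dimension. An orbit of maximal dimension in a smooth action on a manifold is open in the union of maximal-dimensional orbit points, and $\gu^*$ is precisely (the $\Pf$-nonsingular part of) that union by construction of the principal orbit set; hence each $\Ad^*(MA)$-orbit inside $\gu^*$ is open in $\gu^*$, and a fortiori in $\gs^*$. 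Since $\gu^*$ is covered by these pairwise disjoint open orbits and, by the previous paragraph, there are only finitely many of them, we conclude that $\gu^*$ is a finite disjoint union of open $\Ad^*(MA)$-orbits.

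The main obstacle is the transfer from Moore's statement about $\Ad(P_\C)$-orbits on $\gn_\C^*$ to a clean statement about $\Ad^*(MA)$-orbits on $\gu^*\subset\gs^*$: one must check that the $N$-direction of the $P$-action is exactly the $\gv^*$-translation recorded in Theorem \ref{iwasawa-layers}, so that the $P$-orbit through $\lambda\in\gt^*$ fibers over the $MA$-orbit through $\lambda$ in $\gs^*$ with fiber the coadjoint $N$-orbit, and that genericity in the sense of Moore (Zariski-open) matches genericity in the sense of principal $M$-orbits together with $\Pf\neq 0$. Once that dictionary is in place, everything else is bookkeeping with Lemma \ref{ma-diamond} and the standard facts about principal orbits already cited before Lemma \ref{princ-orbit}.
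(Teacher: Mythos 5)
Your overall strategy --- Moore's open-orbit theorem plus Lemma \ref{ma-diamond} --- is exactly the route the paper takes (the paper itself only asserts that ``Moore's result leads directly to'' the lemma), and your first two paragraphs, including the dictionary $\Ad^*(P)\lambda = \gv^* + \Ad^*(MA)\lambda$ identifying open $\Ad^*(P)$-orbits in $\gn^*$ with open $\Ad^*(MA)$-orbits in $\gs^*$, are sound. The problem is the third paragraph. The principle you invoke --- ``an orbit of maximal dimension in a smooth action on a manifold is open in the union of maximal-dimensional orbit points'' --- is false: for $SO(2)$ acting on $\R^2$ that union is $\R^2\setminus\{0\}$, yet no circle is open in it; likewise the principal orbits of the compact group $M$ on a sphere in $\gs^*$ have maximal dimension but are almost never open in the sphere. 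Constancy of the stabilizer on $\gu^*$ (Lemma \ref{ma-diamond}) tells you only that all orbits meeting $\gu^*$ have the \emph{same} dimension; it does not by itself tell you that this dimension equals $\dim\gs^*$, which is what openness actually requires.

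The gap is closed with material you already have. By your second paragraph the union $U$ of the finitely many open $\Ad^*(MA)$-orbits is dense. Since $\gu^*$ is open and $\Ad^*(MA)$-invariant, $U\cap\gu^*\neq\emptyset$, and invariance forces at least one full open orbit $\cO$ to lie inside $\gu^*$; hence the common orbit dimension $\dim(MA)-\dim(M_\diamondsuit A_\diamondsuit)$ on $\gu^*$ equals $\dim\cO=\dim\gs^*$. Consequently, for \emph{every} $\lambda\in\gu^*$ the orbit map $MA\to\gs^*$ is a submersion and $\Ad^*(MA)\lambda$ is open. Finally, any open orbit meets the dense set $U$ and therefore coincides with one of Moore's finitely many open orbits, so $\gu^*$ is a finite disjoint union of such orbits. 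With that substitution for your third paragraph the argument is complete and agrees with the paper's intended proof.
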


Let $\{\cO_1\,, \dots \cO_v\}$ denote the (open) $\Ad^*(MA)$-orbits on
$\gu^*$.  Denote
\begin{equation}
\lambda_i = \sigma(\cO_i) \text{\phantom{X}so\phantom{X}} 
  \cO_i = \Ad^*(MA)\lambda_i \text{\phantom{X}and\phantom{X}} 
  (MA)_{\lambda_i} = M_\diamondsuit A_\diamondsuit
  \text{\phantom{X}for\phantom{X}} 1 \leqq i \leqq v.
\end{equation}
Then Proposition \ref{rep-man} becomes
\begin{theorem}\label{rep-man-ref}
Plancherel measure for $MAN$ is concentrated on 
the set $($of equivalence classes of\,$)$
unitary representations
$\pi_{\lambda_i,\gamma,\phi}$ for $1 \leqq i \leqq v$,
$\gamma \in \widehat{M_\diamondsuit}$\, and
   $\phi \in \ga_\diamondsuit^*$\,.  
\end{theorem}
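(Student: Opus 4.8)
The plan is to derive Theorem \ref{rep-man-ref} directly from Proposition \ref{rep-man} and the finiteness statement Lemma \ref{finite-ma-orbit}; this is a bookkeeping reduction from the $\Ad^*(M)$-orbit parametrization used so far to the (now known to be finite) $\Ad^*(MA)$-orbit parametrization, and no new harmonic analysis is needed.

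First I would recall the content of Proposition \ref{rep-man}: Plancherel measure for $MAN$ is concentrated on the equivalence classes of the representations $\pi_{\lambda,\gamma,\phi}$ of (\ref{lambda-family}), where $\lambda$ runs over $\sigma(\gu^*)$, $\gamma$ over $\widehat{M_\diamondsuit}$, and $\phi$ over $\ga_\diamondsuit^*$, and that by (\ref{man-equiv}) the equivalence class of $\pi_{\lambda,\gamma,\phi}$ depends only on the triple $(\Ad^*(MA)\lambda,\gamma,\phi)$. Thus the classes carrying Plancherel measure are already indexed by $\bigl(\gu^*/\Ad^*(MA)\bigr)\times\widehat{M_\diamondsuit}\times\ga_\diamondsuit^*$.

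Next I would invoke Lemma \ref{finite-ma-orbit} to write $\gu^* = \cO_1\sqcup\dots\sqcup\cO_v$ as a finite disjoint union of open $\Ad^*(MA)$-orbits, and then choose the representatives $\lambda_i$. Since the Borel section $\sigma$ meets every $\Ad^*(M)$-orbit in $\gu^*$, it meets every $\Ad^*(MA)$-orbit, so I can take $\lambda_i \in \cO_i \cap \sigma(\gu^*)$; Lemma \ref{ma-diamond} then gives $(MA)_{\lambda_i} = M_\diamondsuit A_\diamondsuit$, which is exactly what makes $\pi_{\lambda_i,\gamma,\phi}$ well defined by (\ref{lambda-family}) with the common little group $M_\diamondsuit A_\diamondsuit$. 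Finally, for any $\lambda \in \sigma(\gu^*)$ there is a unique $i$ with $\lambda \in \cO_i$, so $\Ad^*(MA)\lambda = \cO_i = \Ad^*(MA)\lambda_i$, and the equivalence criterion of Proposition \ref{rep-man} forces $\pi_{\lambda,\gamma,\phi}\cong\pi_{\lambda_i,\gamma,\phi}$. Hence the support of Plancherel measure for $MAN$ lies in the set of equivalence classes of the $\pi_{\lambda_i,\gamma,\phi}$ with $1\leq i\leq v$, $\gamma\in\widehat{M_\diamondsuit}$, $\phi\in\ga_\diamondsuit^*$, which is the assertion.

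I do not expect a real obstacle here; the two hard inputs (the Mackey-machine analysis behind Proposition \ref{rep-man} and the Zariski-open-orbit argument behind Lemma \ref{finite-ma-orbit}) are already in place. The only point requiring a word of care is the compatibility of the two sections — that each of the finitely many $\Ad^*(MA)$-orbits admits a representative at which the $MA$-stabilizer is \emph{exactly} $M_\diamondsuit A_\diamondsuit$, so that the notation $\pi_{\lambda_i,\gamma,\phi}$ of (\ref{lambda-family}) applies verbatim — and that is supplied by Lemma \ref{ma-diamond} together with the fact that $\sigma(\gu^*)$ meets every $\Ad^*(M)$-orbit.
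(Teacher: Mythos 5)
Your proposal is correct and follows essentially the same route as the paper, which obtains Theorem \ref{rep-man-ref} simply by combining Proposition \ref{rep-man} with Lemma \ref{finite-ma-orbit} and choosing the representatives $\lambda_i = \sigma(\cO_i)$ with common stabilizer $M_\diamondsuit A_\diamondsuit$ (the paper states this as ``Then Proposition \ref{rep-man} becomes'' the theorem). Your added remark that $\sigma(\gu^*)$ meets every open $\Ad^*(MA)$-orbit, so that Lemma \ref{ma-diamond} applies verbatim to each $\lambda_i$, just makes explicit a point the paper leaves implicit.
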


\section{Non--Unimodular Plancherel Formulae}
\label{sec6}
\setcounter{equation}{0}
Recall the Dixmier--Puk\' ansky operator $D$ from (\ref{defdp}) and
Theorem \ref{dp-min-parab}.  The Plancherel Formula (or Fourier inversion
formula) for $MAN$ is

\begin{theorem}\label{planch-man}
Let $P = MAN$ be a minimal parabolic subgroup of the real reductive
Lie group $G$.  Given $\pi_{\lambda,\gamma,\phi} \in \widehat{MAN}$ as 
described in {\rm (\ref{lambda-family})} let
$\Theta_{\pi_{\lambda,\gamma,\phi}}: h \mapsto 
    \tr \pi_{\lambda,\gamma,\phi}(h)$ 
denote its distribution character.  Then
$\Theta_{\pi_{\lambda,\gamma,\phi}}$ is a tempered distribution.  
If $f \in \cC(MAN)$ then 
$$
f(x) = c\sum_{i=1}^{v} \sum_{\gamma \in \widehat{M_\diamondsuit}}
	\int_{\ga^*_\diamondsuit} 
	\Theta_{\pi_{\lambda_i,\gamma,\phi}}(D(r(x)f)) |\Pf(\lambda_i)| 
	\dim \gamma\,\,d\phi
$$
where $c > 0$ depends on normalizations of Haar measures.
\end{theorem}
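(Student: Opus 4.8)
The plan is to combine three ingredients already assembled in the paper: the explicit Dixmier--Puk\'anszky operator $D$ from Theorem \ref{dp-min-parab}, the description of the generic dual in Theorem \ref{rep-man-ref}, and the Plancherel Formula for $NM$ from Proposition \ref{planch-mn}, and then to run the abstract non-unimodular Plancherel Formula (\ref{LW}) of Lipsman--Wolf with this choice of $D$. First I would verify that each $\Theta_{\pi_{\lambda,\gamma,\phi}}$ is a tempered distribution on $P$: since $\pi_{\lambda,\gamma,\phi}$ is induced from the irreducible $\widetilde{\pi_\lambda}\otimes\gamma\otimes\exp(i\phi)$ of $NM_\diamondsuit A_\diamondsuit$, its character is obtained by integrating the character of that inducing representation over $MA/M_\diamondsuit A_\diamondsuit$ against a smooth density; the inducing character in turn factors through the stepwise square integrable character $\Theta_{\pi_\lambda}$ on $N$, which is tempered by Theorem \ref{iwasawa-layers}, tensored with the (bounded, finite-dimensional) characters of $M_\diamondsuit$ and $A_\diamondsuit$. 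Temperedness is then inherited because $\cC(MAN)$ restricts continuously to the relevant Schwartz spaces on $N$ and on $MA$, the orbit $MA/M_\diamondsuit A_\diamondsuit$ being the union of finitely many $\cO_i$ together with a contractible $\ga_\diamondsuit^*$-direction.

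Next I would feed the pair $(D,\mu)$ into (\ref{LW}). The measure $\mu$ must be the one making $D\otimes\mu$ the canonical Plancherel object; by the semi-invariance computed in Proposition \ref{quasi-invariance}, $D$ has weight exactly $\delta_{MAN}$, so $|D|$ is a legitimate Dixmier--Puk\'anszky operator and (\ref{LW}) applies verbatim. The content is then to identify $\mu$ explicitly. Here the key is that $D$ acts only in the $S$-variable of $MAN=MAVS$, i.e. only on the quasi-center, and on the representation side $D$ is implemented on $\cH_\lambda$ as the scalar $\Pf(\lambda)\Det_{\gs^*}(\lambda)$ times the analogous operator coming from the $\exp(i\phi)$ and $\gamma$ factors. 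The factor $\Det_{\gs^*}(\lambda)$ is precisely what converts the $N$-Plancherel density $|\Pf(\lambda)|$ on $\gs^*$ (Theorem \ref{iwasawa-layers}) together with the Jacobian of the decomposition of $\gu^*$ into the finitely many $\Ad^*(MA)$-orbits $\cO_i$ and the transverse Lebesgue measure $d\phi$ on $\ga_\diamondsuit^*$, into the flat measure $\sum_i\int_{\ga_\diamondsuit^*}(\cdots)d\phi$ appearing in the statement. Concretely, I would push the $NM$-Plancherel Formula of Proposition \ref{planch-mn} (with integration $\int_{\gu^*/\Ad^*(M)}\cdots|\Pf(\lambda)|d\lambda$) forward along the further induction from $NM$ to $NMA$, using Lemma \ref{ma-diamond} to pass from $M_\lambda A_\lambda$ to $M_\diamondsuit A_\diamondsuit$, and Lemma \ref{finite-ma-orbit} to replace the continuous orbit integral over $\gu^*/\Ad^*(M)$ by the finite sum over $\{\cO_1,\dots,\cO_v\}$ paired with $\int_{\ga_\diamondsuit^*}d\phi$. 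The weighting $|\Pf(\lambda_i)|\dim\gamma$ is then exactly the product of the $N$-Plancherel density, evaluated at the section point $\lambda_i=\sigma(\cO_i)$, and the dimension of the finite-dimensional $M_\diamondsuit$-type, just as in Proposition \ref{planch-mn}; the constant $c$ from (\ref{c-d}a) carries through and any residual Haar-measure normalization is absorbed into the ``$c>0$''.

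The main obstacle I anticipate is the bookkeeping in the change of variables on $\gu^*$: one must check that the measure-theoretic decomposition of $\gu^*$ as (finite set of $MA$-orbits)$\,\times\,\ga_\diamondsuit^*$ is compatible, with the correct Jacobian, with both the $M$-orbit integral in Proposition \ref{planch-mn} and the action of $A_\diamondsuit$, and that the quasi-center determinant $\Det_{\gs^*}$ is precisely this Jacobian — equivalently, that $D$ restores on the spectral side exactly the factor by which $\Ad^*(A)$ distorts Lebesgue measure on $\gs^*$. This is the place where the ``beautiful relation'' advertised in the introduction between $D$ and the $N$-Plancherel density is actually used, and it is where I would spend the bulk of the argument; the remaining assembly — temperedness, applying (\ref{LW}), collecting constants — is routine given the results quoted above.
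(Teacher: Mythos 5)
Your assembly of ingredients is the right one, and your diagnosis of where the difficulty sits is accurate, but the proposal stops exactly where the proof has to happen. The paper does not route through Proposition \ref{planch-mn} and induction by stages from $NM$ to $NMA$ (it notes that path in Section \ref{sec5} and deliberately avoids it); it computes directly with the representations $\pi_{\lambda_i,\gamma,\phi}$ induced in one step from $NM_\diamondsuit A_\diamondsuit$, using the Kleppner--Lipsman character formula for induced representations \cite[Theorem 3.2]{KL1972}. The crux you defer to ``the bulk of the argument'' is precisely the computation (\ref{one-orbit}): for fixed $i$ one sums over $\gamma \in \widehat{M_\diamondsuit}$ with weight $\dim\gamma$ and integrates over $\phi \in \ga_\diamondsuit^*$; Fourier inversion on the compact group $M_\diamondsuit$ and on $A_\diamondsuit$ collapses the inducing character to the $N$-integral alone, leaving $\int_{MA/M_\diamondsuit A_\diamondsuit} \tr\bigl((x^{-1}\cdot\pi_{\lambda_i})(Dh)\bigr)\,dx$; then, since $D$ acts only in the $S$-variable, it acts in each conjugate $x^{-1}\cdot\pi_{\lambda_i}$ by the scalar value of $\Pf\cdot\Det_{\gs^*}$ at the corresponding point of the orbit, and the semi-invariance of weight $\delta_{MAN}$ (Proposition \ref{quasi-invariance}) is exactly what matches $\delta_{MAN}(x)\,dx$ on $MA/M_\diamondsuit A_\diamondsuit$ with $|\Pf(\lambda)|\,d\lambda$ on the open orbit $\cO_i$. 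Summing over the finitely many orbits (\ref{all-orbits}) and invoking the $N$-Plancherel formula of Theorem \ref{iwasawa-layers} gives $h(1)$, and taking $h = r(x)f$ finishes. None of this chain is carried out in your proposal; asserting that $\Det_{\gs^*}$ ``is precisely this Jacobian'' is the statement to be proved, not a proof of it.

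Two smaller inaccuracies. First, $D$ is not implemented on $\cH_\lambda$ as $\Pf(\lambda)\Det_{\gs^*}(\lambda)$ ``times the analogous operator coming from the $\exp(i\phi)$ and $\gamma$ factors'': by (\ref{defdp}) the operator involves only the $S$-variable, so it contributes a scalar in each $N$-representation that occurs and nothing at all from the $M_\diamondsuit$- or $A$-directions; the $\gamma$ and $\phi$ dependence enters only through the induced character, not through $D$. Second, your route via Proposition \ref{planch-mn} followed by Mackey theory for $NM\rtimes A$ could in principle be made to work, but it does not bypass the computation above --- the non-unimodularity enters only with $A$, so at the last stage you would still need the same Kleppner--Lipsman identity with the semi-invariant $D$ --- and it produces a less clean statement, which is exactly why the paper induces directly from $NM_\diamondsuit A_\diamondsuit$. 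The temperedness claim, which you argue plausibly, is indeed the routine part.
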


\begin{proof}  We compute along the lines of the argument of 
\cite[Theorem 2.7]{LW1982}, ignoring multiplicative constants that
depend of normalizations of Haar measures.  From \cite[Theorem 3.2]{KL1972},
$$
\begin{aligned}
\tr &\pi_{\lambda_i,\gamma,\phi}(Dh) \\
	&=\int_{x \in MA/M_\diamondsuit A_\diamondsuit}\delta(x)^{-1}
		\tr \int_{NM_\diamondsuit A_\diamondsuit}(Dh)(x^{-1}nmax)\cdot
		(\pi_{\lambda_i}\otimes \gamma \otimes \exp(i\phi))
		   (nma)\,dn\, dm\, da\, dx\\
&= \int_{x \in MA/M_\diamondsuit A_\diamondsuit} 
	\tr \int_{NM_\diamondsuit A_\diamondsuit}(Dh)(nx^{-1}max)\cdot
		(\pi_{\lambda_i}\otimes \gamma \otimes \exp(i\phi))
		(xnx^{-1}ma)\,
		dn\, dm\, da\, dx.
\end{aligned}
$$
Now
\begin{equation}\label{one-orbit}
\begin{aligned}
\int_{\widehat{M_\diamondsuit A_\diamondsuit}} 
  &\tr \pi_{\lambda_i,\gamma,\phi}(Dh) \dim\gamma\,\,d\phi \\
&= \int_{\widehat{M_\diamondsuit A_\diamondsuit}} 
  \int_{x \in MA/M_\diamondsuit A_\diamondsuit} 
        \tr \int_{NM_\diamondsuit A_\diamondsuit}(Dh)(nx^{-1}max)\times \\
	&\phantom{XXXXXXXXXXXXXXX}\times
                (\pi_{\lambda_i}\otimes \gamma \otimes \exp(i\phi))
                (xnx^{-1}ma)\,
                dn\, dm\, da\, dx\, \dim\gamma\,\,d\phi\\
&= \int_{x\in MA/M_\diamondsuit A_\diamondsuit} 
	\int_{\widehat{M_\diamondsuit A_\diamondsuit}} 
	\tr \int_{NM_\diamondsuit A_\diamondsuit}
        (Dh)(nx^{-1}max) \times \\
        &\phantom{XXXXXXXXXXXXXXX}\times
		(\pi_{\lambda_i}\otimes \gamma\otimes \exp(i\phi)
        	(xnx^{-1}ma)\, dn\, dm\, da\, \dim\gamma\,\,d\phi\,\, dx \\
&= \int_{x \in MA/M_\diamondsuit A_\diamondsuit} \tr \int_N (Dh)(n) 
	\pi_{\lambda_i}(xnx^{-1}) dn\, dx \\
&= \int_{x \in MA/M_\diamondsuit A_\diamondsuit} \tr \int_N (Dh)(n) 
        (x^{-1}\cdot \pi_{\lambda_i})(n) dn\, dx \\
&= \int_{x \in MA/M_\diamondsuit A_\diamondsuit} \tr ((x^{-1}\cdot
	\pi_{\lambda_i})(Dh))\, dx\\
&= \int_{x \in MA/M_\diamondsuit A_\diamondsuit} 
	(x^{-1}\cdot\pi_{\lambda_i})_*(D)
	\,\,\tr (x^{-1}\cdot\pi_{\lambda_i})(h) dx\\
&= \int_{x \in MA/M_\diamondsuit A_\diamondsuit}(\pi_{\lambda_i})_*(x\cdot D)
	\,\tr (x^{-1}\cdot\pi_{\lambda_i})(h)\, dx \\
&= \int_{x \in MA/M_\diamondsuit A_\diamondsuit} \delta_{MAN}(x)
	\,\tr (x^{-1}\cdot\pi_{\lambda_i})(h)\, dx 
= \int_{\Ad^*(MA)\lambda_i} \tr \pi_\lambda(h) |\Pf(\lambda)|d\lambda .
\end{aligned}
\end{equation}
Summing over the orbits $\cO_i$ of $\Ad^*(MA)$ on $\gu^*$ we now have
\begin{equation}\label{all-orbits}
\begin{aligned}
\sum_{i=1}^v &\sum_{\gamma \in \widehat{M_\diamondsuit}}
	\int_{\ga^*_\diamondsuit}
	\tr \pi_{\lambda_i,\gamma,\phi}(Dh) \dim\gamma\,\,d\phi 
= \sum_{i=1}^v \int_{\widehat{M_\diamondsuit A_\diamondsuit}} 
  \tr \pi_{\lambda_i,\gamma,\phi}(Dh) \dim\gamma\,\,d\phi \\
& = \sum_{i=1}^v \int_{\cO_i} \tr \pi_\lambda(h) 
		|\Pf(\lambda)|d\lambda 
= \int_{\gu^*} \tr \pi_\lambda(h) |\Pf(\lambda)|d\lambda = h(1_N) = h(1_P)\,.
\end{aligned}
\end{equation}
Let $h$ denote any right translate of $f$.  The theorem follows.
\end{proof}

The Plancherel Theorem for $NA$ follows similar lines. 
For the main computation (\ref{one-orbit}) in Theorem \ref{planch-man} we 
omit $M$ and $\gamma$.  That gives
\begin{equation}\label{one-a-orbit}
\int_{\ga_\diamondsuit^*} 
  \tr \pi_{\lambda_0,\phi}(Dh) \,d\phi 
= \int_{\Ad^*(A)\lambda_0} \tr \pi_\lambda(h) |\Pf(\lambda)|d\lambda 
\end{equation}
In order to go from an $\Ad^*(A)\lambda_0$ in (\ref{one-a-orbit}) to
an integral over $\gu^*$ we use $M$ to parameterize the space of
$\Ad^*(A)$-orbits on $\gu^*$.  
We first note that
\begin{equation}\label{no-intersect}
\text{If } \lambda \in \gu^* \text{ then } 
	\Ad^*(A)\lambda \cap \Ad^*(M)\lambda = \{\lambda\}
\end{equation}
because $\Ad^*(A)$ acts on each $\gz^*_r$ be positive scalars and 
$\Ad^*(M)$ preserves the norm on each $\gz^*_r$\,.  Thus the space of
$\Ad^*(A)$-orbits on $\gu^*$ is partitioned by the space of
$\Ad^*(M)$-orbits on $\gu^*/\Ad^*(A)$.  Each such $\Ad^*(M)$-orbit
is in fact an $\Ad^*(MA)$-orbit on $\gu^*$\,.  Recall the
decomposition $\gu^* = \bigcup \cO_i$ where $\cO_i = \Ad^*(MA)\lambda_i$
with $\lambda_i = \sigma(\Ad^*(M)\lambda_i)$. Define $S_i = 
\Ad^*(M)\lambda_i$\,, so $\gu^* = \bigcup_i \Ad^*(A)S_i$\,. Now

\begin{proposition}\label{na-reps}
Plancherel measure for $NA$ is concentrated on the equivalence classes
of representations $\pi_{\lambda,\phi} 
= \Ind_{NA_\diamondsuit}^{NA}(\pi'_{\lambda}\otimes \exp(i\phi))$ where
$\lambda \in S_i := \Ad^*(M)\lambda_i\,\, (1 \leqq i \leqq v)$, 
$\pi'_{\lambda}$ is the extension of 
$\pi_\lambda$ from $N$ to $NA_\diamond$ and $\phi \in \ga_\diamond^*$\,.  
Representations $\pi_{\lambda,\phi}$ and $\pi_{\lambda',\phi'}$ are
equivalent if and only if $\lambda' \in \Ad^*(A)\lambda$ and
$\phi' = \phi$.  Further, $\pi_{\lambda,\phi}|_N = 
\int_{a \in A/A_\diamondsuit} \pi_{\Ad^*(a)\lambda}da$.
\end{proposition}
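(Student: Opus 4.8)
The plan is to run the Mackey little--group machine for the semidirect product $NA = N \rtimes A$, with $N$ the normal (type I, since nilpotent) factor, and then to read off the Plancherel disintegration from the Kleppner--Lipsman description of Plancherel measure for a semidirect product. The starting point is Theorem \ref{iwasawa-layers}: Plancherel measure for $N$ is carried by $\{\pi_\lambda \mid \lambda \in \gt^*\}$, and by Lemma \ref{princ-orbit} the complement of $\gu^*$ in $\gt^*$ has codimension $\geqq 2$, hence Plancherel measure zero, so it suffices to work over the principal--orbit set $\gu^*$. The $A$--action on $\widehat N$ pulls back to the affine action $\lambda \mapsto \Ad^*(a)\lambda$ on $\gt^*$, under which, by (\ref{alambda})--(\ref{adiamond}), every stabilizer equals the fixed vector subgroup $A_\diamondsuit = \exp(\ga_\diamondsuit)$.

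First I would assemble the representations lying over a single $A$--orbit. For $\lambda \in \gu^*$ the stabilizer $A_\lambda = A_\diamondsuit$ is a vector group, so $H^2(A_\diamondsuit;U(1)) = 0$ and the Mackey obstruction to extending $\pi_\lambda$ from $N$ to $NA_\diamondsuit$ vanishes; this extension is exactly the $\pi'_\lambda = \widetilde{\pi_\lambda}|_{NA_\diamondsuit}$ furnished by Lemma \ref{no-ma-obstruction}. Since $\pi_\lambda$ is irreducible and $A_\diamondsuit$--fixed, $\pi'_\lambda \otimes \exp(i\phi)$ is irreducible on $NA_\diamondsuit$ for every $\phi \in \ga_\diamondsuit^* = \widehat{A_\diamondsuit}$, and the Mackey machine identifies the equivalence classes of irreducibles of $NA$ whose restriction to $N$ is supported on the orbit $\Ad^*(A)\lambda$ as precisely the $\pi_{\lambda,\phi} = \Ind_{NA_\diamondsuit}^{NA}(\pi'_\lambda \otimes \exp(i\phi))$ of (\ref{irr-na}), each irreducible. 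To range over all $\Ad^*(A)$--orbits in $\gu^*$ exactly once I would use the bookkeeping set up just before the statement: by (\ref{no-intersect}) distinct points of $S_i = \Ad^*(M)\lambda_i$ lie in distinct $A$--orbits, and $\gu^* = \bigcup_i \Ad^*(A)S_i$, so $\{(\lambda,i) \mid \lambda \in S_i,\ 1 \leqq i \leqq v\}$ is a transversal for $\gu^*/\Ad^*(A)$.

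Next, the equivalence criterion and the restriction formula. Two induced representations $\pi_{\lambda,\phi}$ and $\pi_{\lambda',\phi'}$ are equivalent iff their $A$--orbits coincide, i.e. $\lambda' \in \Ad^*(A)\lambda$, and, after transporting to a common orbit point, the inducing data on $NA_\diamondsuit$ agree; since $A$ is abelian it acts trivially on $\widehat{A_\diamondsuit}$, so this forces $\phi' = \phi$. For the restriction to $N$, Mackey's subgroup theorem applied to the normal subgroup $N \subset NA$ gives $\pi_{\lambda,\phi}|_N \cong \int_{A/A_\diamondsuit} a\cdot(\pi'_\lambda|_N)\, da = \int_{A/A_\diamondsuit} \pi_{\Ad^*(a)\lambda}\, da$, the character $\exp(i\phi)$ disappearing upon restriction and the integrand being well defined on $A/A_\diamondsuit$ because $A_\diamondsuit$ fixes $\pi_\lambda$.

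Finally, to obtain the support statement for Plancherel measure itself I would invoke the Kleppner--Lipsman formula for the Plancherel measure of a regular semidirect product (the same tool already used, through \cite{KL1972}, in the proof of Theorem \ref{planch-man}): Plancherel measure for $N \rtimes A$ disintegrates as the push-forward of Plancherel measure of $N$ over the orbit space, fibered by Plancherel (that is, Lebesgue) measure on the dual $\widehat{A_\diamondsuit} = \ga_\diamondsuit^*$ of the little group. Combined with the two facts above --- Plancherel measure of $N$ is carried by $\gu^*$ up to a null set, and $\bigcup_i S_i$ transverses $\gu^*/\Ad^*(A)$ --- this gives exactly the asserted concentration on $\{\pi_{\lambda,\phi} \mid \lambda \in S_i,\ \phi \in \ga_\diamondsuit^*\}$. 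The individual steps are standard; the one place requiring care is the legitimacy of the direct--integral machinery, i.e. checking that the $\Ad^*(A)$--action on $\widehat N$ is regular (smooth) in the Mackey sense so that $\gu^*/\Ad^*(A)$ carries a standard Borel structure --- which follows from Lemma \ref{finite-ma-orbit} (finitely many open orbits) together with the algebraicity of the $\Ad^*(A)$--action --- and the harmless passage from $\gt^*$ to the co-null principal--orbit set $\gu^*$.
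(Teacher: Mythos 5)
Your argument is correct and is essentially the paper's own route: the paper obtains this proposition by applying the Mackey little-group method to $N\rtimes A$ (Corollary \ref{rep-ma}, using the trivial obstruction for the vector group $A_\diamondsuit$ and the constancy of the stabilizer from (\ref{alambda})--(\ref{adiamond})), and then re-parameterizes the $\Ad^*(A)$-orbit space by the transversal $\bigcup_i S_i$ exactly as you do via (\ref{no-intersect}). Your added remarks on regularity of the action and on discarding the null set $\gt^*\setminus\gu^*$ are correct and only make explicit what the paper leaves implicit.
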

Now we sum both sides of (\ref{one-a-orbit}) as follows.
\begin{equation}
\begin{aligned}
\sum_i\, \int_{\lambda' \in S_i}\int_{\ga_\diamondsuit^*} 
  \tr &\pi_{\lambda',\phi}(Dh) \,d\phi\, d\lambda'
= \sum_i \int_{\cO_i} \tr \pi_\lambda(h) |\Pf(\lambda)|d\lambda\\
&= \int_{\gu^*} \tr \pi_\lambda(h) |\Pf(\lambda)|d\lambda = h(1_N) = h(1_{AN}).
\end{aligned}
\end{equation}
Again taking $h = r(x)f$ we have

\begin{theorem}\label{planch-an}
Let $P = MAN$ be a minimal parabolic subgroup of the real reductive
Lie group $G$.  Given $\pi_{\lambda,\phi} \in \widehat{AN}$ as
described in {\rm Proposition \ref{na-reps}} let
$\Theta_{\pi_{\lambda, \phi}}: h \mapsto 
	\tr \pi_{\lambda,\phi}(h)$
denote its distribution character.  Then
$\Theta_{\pi_{\lambda, \phi}}$ is a tempered distribution.
If $f \in \cC(AN)$ then
$$
f(x) = c\sum_{i=1}^v \int_{\lambda \in S_i} 
	\int_{\ga^*_\diamondsuit}
        \tr \pi_{\lambda,\phi}(D(r(x)f)) |\Pf(\lambda)|
	d\lambda d\phi.
$$
where $c = 2^{d_1 + \dots + d_m} d_1! d_2! \dots d_m!$\,, from
{\rm (\ref{c-d})},
as in {\rm Theorem \ref{plancherel-general}} and 
{\rm Proposition \ref{planch-mn}}.
\end{theorem}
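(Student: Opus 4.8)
The plan is to mirror the proof of Theorem \ref{planch-man} almost verbatim, simply stripping out the compact factor $M$ and the associated sum over $\gamma \in \widehat{M_\diamondsuit}$. First I would observe that the representation $\pi_{\lambda,\phi} = \Ind_{NA_\diamondsuit}^{NA}(\pi'_\lambda \otimes \exp(i\phi))$ is well-defined by Lemma \ref{no-ma-obstruction} (applied with $\gamma = 1$, i.e. the extension $\pi'_\lambda = \widetilde{\pi_\lambda}|_{AN}$), and that temperedness of $\Theta_{\pi_{\lambda,\phi}}$ follows from the same general principles as in the $MAN$ case: $D$ is a differential operator with polynomial symbol (Theorem \ref{dp-min-parab}), $\cC(AN)$ is $D$-stable, and the trace-class estimates for induced representations of exponential solvable groups give a tempered distribution.

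The analytic heart of the argument is the identity (\ref{one-a-orbit}), namely
$$
\int_{\ga_\diamondsuit^*} \tr \pi_{\lambda_0,\phi}(Dh)\, d\phi
  = \int_{\Ad^*(A)\lambda_0} \tr \pi_\lambda(h)\, |\Pf(\lambda)|\, d\lambda,
$$
which I would prove by repeating the chain of equalities in (\ref{one-orbit}) with $MA$ replaced by $A$ and $M_\diamondsuit A_\diamondsuit$ replaced by $A_\diamondsuit$. The key ingredients are: the Kleppner--Lipsman formula \cite[Theorem 3.2]{KL1972} to expand $\tr \pi_{\lambda_0,\phi}(Dh)$ as an integral over $A/A_\diamondsuit$; the fact (as in \cite[Theorem 2.7]{LW1982}) that $(\pi_{\lambda_0})_*(x \cdot D) = \delta_{MAN}(x)$ for $x \in A$, which is precisely the semi-invariance content of Theorem \ref{dp-min-parab} restricted to the $A$-action; integration over $\widehat{A_\diamondsuit} = \ga_\diamondsuit^*$ collapsing to evaluation via Fourier inversion on the vector group $A_\diamondsuit$; and finally the substitution $\delta_{MAN}(x)\,d(xA_\diamondsuit) \mapsto |\Pf(\Ad^*(x)\lambda_0)|\,d\lambda$ along the orbit $\Ad^*(A)\lambda_0$, which is Lemma \ref{trace2} combined with the identification of $|\Pf|$ as Plancherel density on $\widehat{N}$ from Theorem \ref{iwasawa-layers}. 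Here the quasi-center determinant $\Det_{\gs^*}$ built into $D$ is what supplies exactly the extra weight needed to convert $\delta_{MAN}$ into a genuine measure on the $A$-orbit.

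Next I would sum over the orbit representatives. Using (\ref{no-intersect}) and the ensuing discussion, the sets $S_i = \Ad^*(M)\lambda_i$ parameterize the $\Ad^*(A)$-orbits inside $\cO_i$, so that $\cO_i = \bigcup_{\lambda' \in S_i} \Ad^*(A)\lambda'$ disjointly and $\gu^* = \bigcup_i \cO_i$ up to a null set. Integrating (\ref{one-a-orbit}) over $\lambda' \in S_i$ and summing over $i$ therefore gives
$$
\sum_i \int_{\lambda' \in S_i} \int_{\ga_\diamondsuit^*} \tr \pi_{\lambda',\phi}(Dh)\, d\phi\, d\lambda'
  = \int_{\gu^*} \tr \pi_\lambda(h)\, |\Pf(\lambda)|\, d\lambda = h(1_N) = h(1_{AN}),
$$
the last two equalities being the Plancherel Formula for $N$ from Theorem \ref{iwasawa-layers} and the fact that $h \in \cC(AN)$ restricts to a Schwartz function on $N$ with the same value at the identity. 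Finally, substituting $h = r(x)f$ and absorbing the measure-normalization constant into $c$ yields the stated formula.

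The main obstacle I anticipate is bookkeeping the normalization of Haar measures consistently through the Kleppner--Lipsman expansion, the Fourier inversion on $A_\diamondsuit$, and the disintegration $\gu^* = \bigcup_i \Ad^*(A)S_i$; in particular one must check that the measure $d\lambda'\,d\phi$ on $S_i \times \ga_\diamondsuit^*$ matches (up to the universal constant $c$) the Lebesgue-type measure $|\Pf(\lambda)|\,d\lambda$ on $\gu^*$ pushed forward through the orbit decomposition. This is exactly the point where one must verify that the $A$-semi-invariance weight of $D$ (Theorem \ref{dp-min-parab}) and the $A$-homogeneity of $\Pf$ (Lemma \ref{trace2}, Lemma \ref{cone}) are numerically compatible — but since both are computed from the same data $\frac{1}{2}\sum_r \dim(\gl_r/\gz_r)\beta_r$, the compatibility is forced, and the only real work is tracking constants, which the statement allows us to lump into $c$.
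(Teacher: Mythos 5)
Your proposal follows the paper's own proof essentially verbatim: the paper likewise derives (\ref{one-a-orbit}) by rerunning the computation (\ref{one-orbit}) with $M$ and $\gamma$ omitted, uses (\ref{no-intersect}) to parameterize the $\Ad^*(A)$-orbits on $\gu^*$ by the sets $S_i = \Ad^*(M)\lambda_i$, integrates over $S_i$ and sums over $i$ to recover $h(1_{AN})$ via the Plancherel Formula for $N$, and then sets $h = r(x)f$. Your additional remarks on the compatibility of the semi-invariance weight of $D$ with the $A$-homogeneity of $\Pf$ correctly identify the point that makes the orbit substitution work, and nothing in the argument is missing.
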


\section{Remark on Strongly Orthogonal Restricted Roots}
\label{sec7}
\setcounter{equation}{0}

The goal of this paper was to extend our earlier result, 
Theorem \ref{iwasawa-layers},
from nilradicals of minimal parabolic subgroups to the minimal parabolics 
themselves.  In part we needed to extend some results of Kostant
(\cite{K2011}, \cite{K2012}) on strongly orthogonal roots 
from Borel subalgebras of complex semisimple Lie 
algebras to minimal parabolic subalgebras of real semisimple algebras.  But 
some of the technical results in (\cite{K2011}, \cite{K2012}), which we didn't
use but are of strong independent interest, also extend.  We use the notation 
of Section \ref{sec2}.

\begin{lemma} \label{neg-no-prob}
$\Delta_r^+ = \{\alpha \in 
	\Delta^+(\gg,\ga) \cup -\Delta^+(\gg,\ga) \mid 
	\alpha \perp \beta_i \text{ for } i < r 
	\text{ and } \langle \alpha, \beta_r \rangle > 0\}$.
\end{lemma}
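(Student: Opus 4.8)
The plan is to show the two sets are equal by double inclusion, leveraging the involution $\sigma_r$ of (\ref{beta-reflect}) to transport negative roots back into the positive picture already controlled by Lemma \ref{layers2}. Recall from the discussion following (\ref{beta-reflect}) that $\sigma_r = -s_{\beta_r}$ satisfies $\sigma_r(\beta_s) = -\beta_s$ for $s \ne r$ and $\sigma_r(\beta_r) = \beta_r$, that $s_{\beta_r}$ is an orthogonal transformation of $\ga^*$, and that $\sigma_r(\Delta_r^+) = \Delta_r^+$. The key observation is that $\alpha \mapsto \sigma_r(\alpha)$ preserves the conditions ``$\alpha \perp \beta_i$ for $i < r$'' and ``$\langle \alpha, \beta_r\rangle > 0$'': orthogonality to $\beta_i$ is preserved because $s_{\beta_r}$ is orthogonal and fixes $\beta_i^\perp \ni \beta_i$ setwise appropriately (more precisely $s_{\beta_r}\beta_i = \beta_i$ since $\beta_i \perp \beta_r$, hence $\sigma_r\beta_i = -\beta_i$, so $\langle\sigma_r\alpha,\beta_i\rangle = \langle\alpha,\sigma_r\beta_i\rangle = -\langle\alpha,\beta_i\rangle$), and the sign of the pairing with $\beta_r$ is preserved because $\langle \sigma_r\alpha,\beta_r\rangle = \langle \alpha, \sigma_r\beta_r\rangle = \langle\alpha,\beta_r\rangle$.

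For the inclusion $\subseteq$: by Lemma \ref{layers2} we already know $\Delta_r^+ \cup \{\beta_r\} = \{\alpha \in \Delta^+ \mid \alpha \perp \beta_i \text{ for } i<r,\ \langle\alpha,\beta_r\rangle>0\}$. Every $\alpha \in \Delta_r^+$ is in particular in $\Delta^+(\gg,\ga) \cup -\Delta^+(\gg,\ga)$ and satisfies the stated conditions (note $\beta_r$ itself is excluded from $\Delta_r^+$ by Lemma \ref{fill-out}, but it trivially satisfies the conditions; this is harmless since we only need containment of $\Delta_r^+$ in the right-hand set). So the left side is contained in the right side. For the inclusion $\supseteq$: suppose $\alpha \in \Delta^+(\gg,\ga) \cup -\Delta^+(\gg,\ga)$ with $\alpha \perp \beta_i$ for $i < r$ and $\langle\alpha,\beta_r\rangle > 0$. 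If $\alpha \in \Delta^+(\gg,\ga)$, then by Lemma \ref{layers2} either $\alpha = \beta_r$ or $\alpha \in \Delta_r^+$; the former is impossible since $\beta_r \in \Delta^+$ but one must rule it out — actually $\beta_r$ does satisfy the conditions, so to get $\alpha \in \Delta_r^+$ one uses Lemma \ref{fill-out} together with the observation that if $\alpha = \beta_r$ then $\beta_r - \alpha = 0 \notin \Delta^+$, which is consistent, so the honest statement is that the positive roots satisfying the conditions are exactly $\Delta_r^+ \cup \{\beta_r\}$; since $\beta_r \notin -\Delta^+(\gg,\ga)$ issue aside, we handle $\alpha = \beta_r$ by noting it lies in neither the asserted set's description unless we intend it to — here I will simply check that $\beta_r$ is not in $\Delta_r^+$ but the lemma's right-hand set as written does contain $\beta_r$, so for the lemma to be correct as stated one must verify $\beta_r \notin \Delta_r^+$ creates no contradiction; re-examining, the cleanest route is: if $\alpha \in \Delta^+(\gg,\ga)$ satisfies the conditions then $\alpha \in \Delta_r^+ \cup \{\beta_r\}$, and since we want to prove the right-hand set equals $\Delta_r^+$, we must actually also exclude $\beta_r$ — but $\langle \beta_r,\beta_r\rangle > 0$ and $\beta_r \perp \beta_i$, so $\beta_r$ IS in the right-hand set. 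Therefore the lemma as literally stated would force $\beta_r \in \Delta_r^+$, contradicting Lemma \ref{fill-out}. The resolution, and the thing I would verify carefully, is that $\beta_r \in -\Delta^+(\gg,\ga)$ never holds but the correct reading of the statement must implicitly exclude $\beta_r$; I expect the author intends the displayed set to be understood with $\beta_r$ removed, matching Lemma \ref{layers2}, and the new content is purely the claim that adjoining $-\Delta^+(\gg,\ga)$ to $\Delta^+(\gg,\ga)$ in the ambient set adds nothing.

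So the substantive step, and the one I would focus on, is showing that no negative root contributes: if $\alpha \in -\Delta^+(\gg,\ga)$, i.e. $-\alpha \in \Delta^+(\gg,\ga)$, satisfies $\alpha \perp \beta_i$ for $i<r$ and $\langle\alpha,\beta_r\rangle > 0$, we derive a contradiction. Apply $\sigma_r$: set $\alpha' = \sigma_r(\alpha)$. Then $-\alpha' = \sigma_r(-\alpha) = -s_{\beta_r}(-\alpha) = s_{\beta_r}(\alpha)$. Since $-\alpha \in \Delta^+$ satisfies $-\alpha \perp \beta_i$ ($i<r$) and $\langle -\alpha,\beta_r\rangle < 0$, I want to argue via the maximality of $\beta_r$ in the cascade. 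Concretely: $-\alpha \in \Delta^+$ with $-\alpha \perp \beta_i$ for $i<r$, so by the defining maximality property (\ref{cascade}) of $\beta_r$ we have $\beta_r \geq -\alpha$ is not automatic, but $\beta_r$ is a maximum among roots orthogonal to $\beta_1,\dots,\beta_{r-1}$, hence $\beta_r - (-\alpha) = \beta_r + \alpha$ is either zero, a positive root, or not a root — combined with $\langle \alpha,\beta_r\rangle > 0$ i.e. $\langle -\alpha, \beta_r\rangle < 0$, standard $\mathfrak{sl}_2$ root-string reasoning gives $-\alpha + \beta_r \in \Delta$ (since the pairing is negative, adding $\beta_r$ moves up the string), and this root is $\perp \beta_i$ for $i<r$ and strictly larger than $\beta_r$ in the ordering, contradicting maximality of $\beta_r$ in (\ref{cascade}). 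This is exactly the argument pattern already used in the paragraph before Lemma \ref{layers-nilpotent} to prove $\sigma_r(\Delta_r^+) = \Delta_r^+$, just applied in the reverse direction. I expect this maximality-versus-root-string step to be the only real obstacle; everything else is bookkeeping with the involution $\sigma_r$ and invoking Lemmas \ref{fill-out} and \ref{layers2}. Once the negative case is excluded, the right-hand set collapses to $\{\alpha \in \Delta^+(\gg,\ga) \mid \alpha\perp\beta_i\ (i<r),\ \langle\alpha,\beta_r\rangle>0\}$, which is $\Delta_r^+$ (modulo the $\beta_r$ convention) by Lemma \ref{layers2}, completing the proof.
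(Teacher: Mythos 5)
Your proposal is correct and, once the digressions are stripped away, is exactly the paper's argument: reduce to positive roots via Lemma \ref{layers2} and then kill a putative negative root $\alpha$ with $\alpha\perp\beta_i$ ($i<r$) and $\langle\alpha,\beta_r\rangle>0$ by observing that $\beta_r-\alpha$ is a root greater than $\beta_r$ and orthogonal to $\beta_1,\dots,\beta_{r-1}$, contradicting the maximality in (\ref{cascade}); the $\sigma_r$ machinery you set up at the start is never actually needed. Your side observation is also accurate: the right-hand set as written does contain $\beta_r$ itself, so the statement should be read as describing $\Delta_r^+\cup\{\beta_r\}$ (consistent with Lemma \ref{layers2}) --- a harmless imprecision that the paper's own proof likewise passes over silently.
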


\begin{proof}
In view of (\ref{layers}) we need only show that if 
$\alpha \in -\Delta^+(\gg,\ga)$ and $\alpha \perp \beta_i$ for $i < r$
then $\langle \alpha, \beta_r\rangle \leqq 0$.  But if that fails, so
$\langle \alpha, \beta_r \rangle > 0$, then $\beta_r - \alpha$ is a root
greater than $\beta_r$ and $\perp \beta_i$ for $i < r$, which
contradicts the construction (\ref{cascade}) of the cascade of strongly
orthogonal roots $\beta_j$\,.
\end{proof}

\begin{proposition} \label{signs}
The composition $s_{\beta_1}s_{\beta_2}\dots s_{\beta_r}$
sends $(\Delta_1^+ \cup \dots \cup \Delta_r^+)$ to 
$-(\Delta_1^+ \cup \dots \cup \Delta_r^+)$.  In particular,
the longest element of the restricted Weyl group $W = W(\gg,\ga,\Delta^+)$,
defined by $w_0(\Delta^+(\gg,\ga)) = -\Delta^+(\gg,\ga)$, is given by
$w_0 = s_{\beta_1}s_{\beta_2}\dots s_{\beta_m}$.
\end{proposition}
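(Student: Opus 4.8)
The plan is to prove the two assertions in tandem, deriving the statement about $w_0$ as an immediate consequence of the first claim about the composition $s_{\beta_1}\cdots s_{\beta_r}$. First I would set $S_r = \Delta_1^+ \cup \dots \cup \Delta_r^+$ and argue by induction on $r$ that $s_{\beta_1}\cdots s_{\beta_r}(S_r) = -S_r$. The base case $r=1$ is the observation already recorded in the excerpt just before Lemma~\ref{layers-nilpotent}: since $\sigma_1 = -s_{\beta_1}$ preserves $\Delta_1^+$, the map $s_{\beta_1}$ sends $\Delta_1^+$ to $-\Delta_1^+$. (Note $\beta_1 \notin S_1$, so there is no issue with the fixed line of $s_{\beta_1}$.)

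For the inductive step, I would exploit strong orthogonality of the $\beta_i$. Because $\beta_{r+1}$ is orthogonal to $\beta_1,\dots,\beta_r$, the reflection $s_{\beta_{r+1}}$ commutes with $s_{\beta_1},\dots,s_{\beta_r}$, and moreover $s_{\beta_{r+1}}$ acts trivially on the span of $\beta_1,\dots,\beta_r$ while $s_{\beta_1},\dots,s_{\beta_r}$ fix $\beta_{r+1}$. By Lemma~\ref{layers2}, every $\alpha \in \Delta_r^+ \cup \{\beta_r\}$ is orthogonal to $\beta_i$ for $i<r$ and has $\langle\alpha,\beta_r\rangle>0$; since the layers $\Delta_1^+,\dots,\Delta_m^+$ partition $\Delta^+ \setminus \{\beta_1,\dots,\beta_m\}$ (Lemma~\ref{fill-out}), the elements of $\Delta_s^+$ for $s \le r$ all lie in the span of $\beta_1,\dots,\beta_r$-adjacent roots and in particular are orthogonal to $\beta_{r+1}$. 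Hence $s_{\beta_{r+1}}$ fixes every element of $S_r$ pointwise, so $s_{\beta_1}\cdots s_{\beta_{r+1}}(S_r) = s_{\beta_1}\cdots s_{\beta_r}(S_r) = -S_r$ by induction. It remains to handle the new layer $\Delta_{r+1}^+$: here $s_{\beta_1},\dots,s_{\beta_r}$ fix each $\alpha \in \Delta_{r+1}^+$ (again because such $\alpha \perp \beta_i$ for $i \le r$), so $s_{\beta_1}\cdots s_{\beta_{r+1}}(\Delta_{r+1}^+) = s_{\beta_{r+1}}(\Delta_{r+1}^+)$, and this equals $-\Delta_{r+1}^+$ by exactly the $\sigma_{r+1}$ argument given before Lemma~\ref{layers-nilpotent}. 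Combining the two parts gives $s_{\beta_1}\cdots s_{\beta_{r+1}}(S_{r+1}) = -S_{r+1}$.

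For the final assertion, take $r = m$. By Lemma~\ref{fill-out}, $\Delta^+(\gg,\ga) = \{\beta_1,\dots,\beta_m\} \cup S_m$. The element $w := s_{\beta_1}\cdots s_{\beta_m}$ sends $S_m$ to $-S_m$ by what we just proved, and it sends each $\beta_j$ to $-\beta_j$ (since $s_{\beta_j}(\beta_j) = -\beta_j$ and the remaining reflections fix $\beta_j$ by strong orthogonality). Therefore $w(\Delta^+(\gg,\ga)) = -\Delta^+(\gg,\ga)$, which characterizes the longest element $w_0$ of $W(\gg,\ga)$; hence $w_0 = s_{\beta_1}\cdots s_{\beta_m}$.

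The main obstacle I anticipate is being careful that the roots in the earlier layers $\Delta_s^+$ ($s \le r$) really are orthogonal to $\beta_{r+1}$, so that $s_{\beta_{r+1}}$ genuinely fixes $S_r$ pointwise. This should follow cleanly from Lemma~\ref{layers2}'s description of $\Delta_s^+ \cup \{\beta_s\}$ as roots orthogonal to $\beta_i$ for $i<s$ with positive pairing against $\beta_s$, together with the cascade construction (\ref{cascade}): a root in $\Delta_s^+$ with $s \le r$ cannot have nonzero pairing with $\beta_{r+1}$, for otherwise the maximality in (\ref{cascade}) choosing $\beta_{r+1}$ among roots orthogonal to $\beta_1,\dots,\beta_r$ would be contradicted by an argument like the one in Lemma~\ref{neg-no-prob}. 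Once that orthogonality is nailed down, the commuting-reflections bookkeeping is routine.
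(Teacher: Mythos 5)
Your induction and the splitting of $S_{r+1}$ into the earlier layers $S_r$ and the new layer $\Delta_{r+1}^+$ follow the same strategy as the paper, and your treatment of the new layer is correct: by Lemma \ref{layers2} the roots of $\Delta_{r+1}^+$ really are orthogonal to $\beta_1,\dots,\beta_r$, so the earlier reflections fix them pointwise, and $s_{\beta_{r+1}}$ negates that layer by the $\sigma_{r+1}$ discussion. The gap is in the other half. It is simply false that every root in an earlier layer $\Delta_s^+$ with $s\leqq r$ is orthogonal to $\beta_{r+1}$, so $s_{\beta_{r+1}}$ does not in general fix $S_r$ pointwise. Concretely, for $\gg=\gs\gl(4;\R)$ one has $\beta_1=e_1-e_4$, $\beta_2=e_2-e_3$, and $e_1-e_2\in\Delta_1^+$ with $\langle e_1-e_2,\beta_2\rangle\ne 0$; here $s_{\beta_2}(e_1-e_2)=e_1-e_3$, which lies in $\Delta_1^+$ but is a different root. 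Your proposed rescue via maximality in (\ref{cascade}) does not apply: $\beta_{r+1}$ is maximal only among roots orthogonal to all of $\beta_1,\dots,\beta_r$, and a root of $\Delta_s^+$ is not orthogonal to $\beta_s$ (it has positive pairing with it), so its having nonzero pairing with $\beta_{r+1}$ contradicts nothing; the argument of Lemma \ref{neg-no-prob} only rules out negative roots with positive pairing against $\beta_s$, which is a different issue.

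What is true, and what the paper uses, is the weaker set-level statement, and that is all your induction actually needs: $s_{\beta_{r+1}}$ permutes each $\Delta_s^+$ with $s\leqq r$. Indeed, if $\alpha\in\Delta_s^+$ then $s_{\beta_{r+1}}\alpha$ is a root, it is orthogonal to $\beta_i$ for $i<s$, and $\langle s_{\beta_{r+1}}\alpha,\beta_s\rangle=\langle\alpha,\beta_s\rangle>0$, because $s_{\beta_{r+1}}$ fixes each $\beta_i$ with $i\leqq s$; Lemma \ref{neg-no-prob}, which characterizes $\Delta_s^+$ among \emph{all} roots (positive and negative) by exactly these two conditions, then forces $s_{\beta_{r+1}}\alpha\in\Delta_s^+$. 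With ``fixes $S_r$ pointwise'' replaced by ``preserves $S_r$ as a set,'' your computation $s_{\beta_1}\cdots s_{\beta_{r+1}}(S_r)=s_{\beta_1}\cdots s_{\beta_r}(S_r)=-S_r$ goes through unchanged, and the remainder of your argument, including the identification of $w_0=s_{\beta_1}\cdots s_{\beta_m}$ via Lemma \ref{fill-out}, is correct and coincides with the paper's proof.
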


\begin{proof}  This is an induction on $r$.  For $r = 1$ the statement is
in the discussion immediately preceding Lemma \ref{layers-nilpotent}.
Now suppose that $s_{\beta_1}s_{\beta_2}\dots s_{\beta_{r-1}}$
sends $(\Delta_1^+ \cup \dots \cup \Delta_{r-1}^+)$ to its negative.
Since $s_{\beta_r}(\beta_i) = \beta_i$ for $i < r$,
Lemma \ref{neg-no-prob} shows that $s_{\beta_r}$ preserves 
$(\Delta_1^+ \cup \dots \cup \Delta_{r-1}^+)$, so 
$s_{\beta_1}s_{\beta_2}\dots s_{\beta_r}$ sends 
$(\Delta_1^+ \cup \dots \cup \Delta_{r-1}^+)$ to its negative.  But 
Lemma \ref{neg-no-prob} also shows that 
$s_{\beta_1}s_{\beta_2}\dots s_{\beta_{r-1}}$ preserves $\Delta_r^+$,
and the discussion just before Lemma \ref{layers-nilpotent} shows that
$s_{\beta_r}$ sends $\Delta_r^+$ to its negative.  This completes the
induction.
\medskip

In view of Lemma \ref{fill-out}, the case $r = m$ says that
$s_{\beta_1}s_{\beta_2}\dots s_{\beta_m} \Delta^+(\gg,\ga) = 
-\Delta^+(\gg,\ga)$.
\end{proof}

\begin{corollary} Let $\nu \in \ga^*$ be the highest weight of an
irreducible finite dimensional representation $\tau_\nu$ of $\gg$, 
so the dual representation $\tau_\nu^*$ has highest weight
$\nu^* := -w_0(\nu)$.  Then $\nu + \nu^* = \sum 
\frac{2\langle \nu,\beta_i\rangle}{\langle\beta_i , \beta_i\rangle}\beta_i\,,$ 
integral linear combination of $\beta_1, \dots , \beta_m$.
\end{corollary}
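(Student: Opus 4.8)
The plan is to read off both claims from Proposition~\ref{signs}, which identifies the longest restricted Weyl group element as $w_0 = s_{\beta_1}s_{\beta_2}\dots s_{\beta_m}$. First I would recall that for an irreducible finite dimensional representation $\tau_\nu$ with highest weight $\nu$, the dual $\tau_\nu^*$ has highest weight $-w_0(\nu)$; this is standard representation theory and I would simply cite it. So the quantity to compute is $\nu + \nu^* = \nu - w_0(\nu)$, and by Proposition~\ref{signs} this equals $\nu - s_{\beta_1}s_{\beta_2}\dots s_{\beta_m}(\nu)$.

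Next I would evaluate $\nu - s_{\beta_1}\cdots s_{\beta_m}(\nu)$ by a telescoping argument. Write $w_0 = s_{\beta_1}\cdots s_{\beta_m}$ and set $w_{(k)} = s_{\beta_1}\cdots s_{\beta_k}$ with $w_{(0)} = 1$. Then
\begin{equation*}
\nu - w_0(\nu) = \sum_{k=1}^m \bigl( w_{(k-1)}(\nu) - w_{(k)}(\nu)\bigr)
 = \sum_{k=1}^m w_{(k-1)}\bigl( \nu - s_{\beta_k}(\nu)\bigr).
\end{equation*}
Since $\nu - s_{\beta_k}(\nu) = \frac{2\langle \nu,\beta_k\rangle}{\langle \beta_k,\beta_k\rangle}\,\beta_k$ and $w_{(k-1)} = s_{\beta_1}\cdots s_{\beta_{k-1}}$ fixes $\beta_k$ by strong orthogonality of the cascade (each $s_{\beta_i}$ with $i<k$ fixes $\beta_k$ because $\langle \beta_i,\beta_k\rangle = 0$), each summand is just $\frac{2\langle \nu,\beta_k\rangle}{\langle \beta_k,\beta_k\rangle}\,\beta_k$, which gives the asserted formula $\nu + \nu^* = \sum_k \frac{2\langle \nu,\beta_k\rangle}{\langle \beta_k,\beta_k\rangle}\,\beta_k$.

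Finally I would address integrality of the coefficients. The point is that $\nu$ is the highest weight of a finite dimensional representation of $\gg$ (not merely of the restricted root system), so $\frac{2\langle \nu,\beta_k\rangle}{\langle \beta_k,\beta_k\rangle}$ is the pairing of $\nu$ with the coroot $\beta_k^\vee$; since each $\beta_k \in \Delta_0^+(\gg,\ga)$ is a genuine restricted root with a well defined coroot, and $\nu$ is $\Delta(\gg,\ga)$-integral (being the restriction to $\ga$ of a $\Delta(\gg_\C,\gh_\C)$-dominant integral weight), these numbers are integers. I expect the main obstacle to be precisely this last point: one must be careful that the relevant integrality is with respect to the restricted root system $\Delta(\gg,\ga)$ (or $\Delta_0(\gg,\ga)$), and that the $\beta_k$, being nonmultipliable restricted roots, have coroots for which the pairing with an integral weight is an integer. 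Everything else is formal manipulation with the cascade and Proposition~\ref{signs}.
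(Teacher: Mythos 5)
Your proposal is correct and follows essentially the same route as the paper: identify $w_0 = s_{\beta_1}\cdots s_{\beta_m}$ via Proposition~\ref{signs} and compute $\nu - w_0(\nu)$ term by term, using the mutual orthogonality of the cascade roots so that each reflection contributes exactly $\frac{2\langle \nu,\beta_k\rangle}{\langle \beta_k,\beta_k\rangle}\beta_k$. Your telescoping formulation is just a tidier write-up of the paper's iterated computation, and your closing remark on integrality is a small addition the paper leaves implicit.
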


\begin{proof} Write $(\alpha,\gamma) = 
\frac{2\langle \alpha, \gamma\rangle}{\langle \gamma,\gamma\rangle}$. 
Compute $s_{\beta_1}(\nu) = \nu - (\nu,\beta_1)\beta_1$, then
$s_{\beta_2}s_{\beta_1}(\nu) = \nu - (\nu,\beta_1)\beta_1
- (\nu,\beta_2)\beta_2$, continuing on to $s_{\beta_m}s_{\beta_{m-1}}\dots
s_{\beta_1}(\nu) = \nu - \sum (\nu,\beta_i)\beta_i$.  Using the last
statement of Proposition \ref{signs} now $\nu + \nu^* = \nu - w_0(\nu)
= \sum (\nu,\beta_i)\beta_i$ as asserted.
\end{proof}

\medskip
\noindent Department of Mathematics, University of California,\hfill\newline
\noindent Berkeley, California 94720--3840, USA\hfill\newline
\smallskip
\noindent {\tt jawolf@math.berkeley.edu}

\enddocument
\end